\documentclass[12pt]{article}

\usepackage{e-jc}
\usepackage{amsmath}
\usepackage[utf8]{inputenc}
\usepackage[T1]{fontenc}
\usepackage{ae,aecompl}
\usepackage{enumerate}
\usepackage[active]{srcltx}     %
\usepackage{ifpdf}
\ifpdf
\fi
\usepackage{array}
\usepackage{graphicx}
\usepackage{rotating}
\usepackage{lineno}

\title%
{Some relational structures with polynomial growth and their
  associated algebras I:\\
{Quasi-polynomiality of the profile}}

\author{Maurice Pouzet\thanks{This work was done under the auspices
  of the Intas programme Universal algebra and Lattice theory, and
  supported by CMCU Franco-Tunisien "Outils math\'ematiques pour
  l'informatique".}\\
\small  Universit\'e  Claude-Bernard, Lyon1, ICJ,\\[-0.8ex]
\small $43$, Bd. du $11$ Novembre $1918$, Villeurbanne, F-69622\\
\small Department of Mathematics and Statistics, The University of Calgary,\\[-0.8ex]
\small Calgary, Alberta, Canada\\
\small \texttt{pouzet@univ-lyon1.fr}\\
\and
Nicolas M. Thi\'ery\\
\small Universit\'e Paris-Sud, Laboratoire de Math\'ematiques d'Orsay, Orsay, F-91405;\\[-0.8ex]
\small CNRS, Orsay, F-91405\\
\small \texttt{nthiery@users.sf.net}\\
\small \textit{Dedicated to Adriano Garsia, with warmth and admiration}
}

\specs{1}{20}{2013}

\date{%
  \small Mathematics Subject Classification: 05A15, 05E15, 03E20\\
  March 20, 2014%
}

\usepackage{color}
\usepackage{ifthen}
\newboolean{draft}
\setboolean{draft}{true}
\ifdraft
\newcommand{\TODO}[2][To do: ]{\textcolor{red}{\textbf{#1#2}}}
\else
\newcommand{\TODO}[2][]{}
\fi

\usepackage{amsfonts,amsthm,amssymb}
\usepackage{hyperref}

\newcommand{\suchthat}{{\ :\ }}

\newcommand{\isection}{{\downarrow}}
\newcommand{\sg}{{\mathfrak S}}

\def\shuff#1#2{\mathbin{
      \hbox{\vbox{
        \hbox{\vrule
              \hskip#2
              \vrule height#1 width 0pt
               }%
        \hrule}%
             \vbox{
        \hbox{\vrule
              \hskip#2
              \vrule height#1 width 0pt
               \vrule }%
        \hrule}%
}}}
\def\shuffl{{\mathchoice{\shuff{7pt}{3.5pt}}%
                        {\shuff{6pt}{3pt}}%
                        {\shuff{4pt}{2pt}}%
                        {\shuff{3pt}{1.5pt}}}}%
\def\shuffle{\, \shuffl \,}

\newcommand{\reduced}{{\operatorname{red}}}
\newcommand{\leaf}{\circ}
\newcommand{\PRT}{\mathcal{T}}
\newcommand{\type}[1]{\tau(#1)}
\newcommand{\tree}{\tau}

\newcommand{\dom}{{\operatorname{dom}}}
\newcommand{\im}{{\operatorname{im}}}

\newcommand{\id}{{\operatorname{id}}}

\newcommand{\aut}{{\operatorname{Aut}}}
\newcommand{\qsym}{{\operatorname{QSym}}}
\newcommand{\sym}{{\operatorname{Sym}}}
\newcommand{\lm}{{\operatorname{lm}}}
\newcommand{\hilbert}{{\mathcal H}}
\newcommand{\ideal}[1][I]{\mathcal #1}
\newcommand{\age}{{\mathcal A}}
\newcommand{\agealgebra}{{\K\mathcal A}}
\newcommand{\profile}{\varphi}
\newcommand{\lex}{{\operatorname{lex}}}
\newcommand{\revlex}{{\operatorname{revlex}}}
\newcommand{\degrevlex}{{\operatorname{degrevlex}}}
\newcommand{\setalgebra}[1][E]{{\K^{[#1]^{<\omega}}}}
\newcommand{\minmorphdec}{{\mathcal P}}

\newcommand{\goodsubset}{{\mathcal{C}}}
\newcommand{\setpartitions}{{\mathcal{S}}}
\newcommand{\coarser}{\preceq}

\newcommand{\R}{\mathbb{R}}
\newcommand{\K}{\mathbb{K}}
\newcommand{\N}{\mathbb{N}}

\newcommand{\Z}{\mathbb{Z}}
\newcommand{\Q}{\mathbb{Q}}

\newtheorem{theorem}{Theorem}[section]

\newtheorem{axiom}[theorem]{Axiom}
\newtheorem{axioms}[theorem]{Axioms}
\newtheorem{lemma}[theorem]{Lemma}

\newtheorem{proposition}[theorem]{Proposition} 
\newtheorem{corollary}[theorem]{Corollary} 
 
\theoremstyle{definition}

\newtheorem{problem}[theorem]{Problem}
\newtheorem{problems}[theorem]{Problems}
\newtheorem{conjecture}[theorem]{Conjecture}

\theoremstyle{remark}

\newtheorem{question}[theorem]{Question}

\newtheorem{example}[theorem]{Example}
\newtheorem{examples}[theorem]{Examples}

\def\centerpicture #1 by #2 (#3){\leavevmode
        \vbox to #2{
        \hrule width #1 height 0pt depth 0pt
        \vfill
        \special{pictfile #3}}}

\makeatletter
\newskip\@bigflushglue \@bigflushglue = -100pt plus 1fil

\def\bigcentering{\let\\\@centercr\rightskip\@bigflushglue%
\leftskip\@bigflushglue
\parindent\z@\parfillskip\z@skip}

\makeatother

\begin{document}
\maketitle

\begin{abstract}
  The \emph{profile} of a relational structure $R$ is the function
  $\profile_R$ which counts for every integer $n$ the number
  $\profile_R(n)$, possibly infinite, of substructures of $R$ induced
  on the $n$-element subsets, isomorphic substructures being
  identified.  If $\profile_R$ takes only finite values, this is the
  Hilbert function of a graded algebra associated with $R$, the
  \emph{age algebra} $\agealgebra(R)$, introduced by P.~J.~Cameron.
  
  In this paper we give a closer look at
  this association, particularly when the relational structure $R$
  admits a \emph{finite monomorphic decomposition}. This setting still
  encompass well-studied graded commutative algebras like invariant
  rings of finite permutation groups, or the rings of quasi-symmetric
  polynomials. We prove that $\profile_R$ is eventually a
  quasi-polynomial, this supporting the conjecture that, under mild
  assumptions on $R$, $\profile_R$ is eventually a quasi-polynomial
  when it is bounded by some polynomial.

  \medskip \noindent {\bf Keywords:} Relational structure, profile,
  graded algebra, Hilbert function, Hilbert series, polynomial growth.
\end{abstract}

\section*{Introduction}
This paper is  about a counting function: the
\emph{profile} of a relational structure $R$ and its
interplay with a graded connected commutative algebra associated with
$R$, the \emph{age algebra} of $R$.  

Many natural counting functions are profiles. Several interesting examples
come from permutation groups. For example, if $G$ is a permutation
group on a set $E$, the function $\theta_G$ which counts for every
integer $n$ the number of orbits of the action of $G$ on the
$n$-element subsets of $E$, is a profile, the \emph{orbital profile}
of $G$. Groups whose
orbital profile takes only finite values are called
\emph{oligomorphic}; their study, introduced by Cameron, is a whole
research subject by itself~\cite{Cameron.1990,Cameron.OPG.2009}.  If
$G$ acts on $\{1,\dots,k\}$, the Hilbert function of the subalgebra
$\K[X]^G$ of the polynomials in $\K[X]:= \K[X_1,\ldots, X_k]$ which
are invariant under the action of $G$ is a profile, and in fact an
orbital profile.
This fact led   Cameron  to associate a graded algebra $\agealgebra(R)$ to each
relational structure $R$~\cite{cameron.1997}; its main feature is that
its Hilbert function coincides with the profile of $R$ as long as it
takes only finite values. As it is well know, the Hilbert function of
a graded commutative algebra $A$ is eventually a quasi-polynomial (hence bounded by some polynomial), provided that $A$ is finitely
generated. The converse does not hold.  Still, this leads us to
conjecture that the profile $\profile_R$ of a relational structure $R$
is eventually a quasi-polynomial when $\profile_R$ is bounded by some
polynomial (and the kernel of $R$ is finite).

This conjecture holds if $R$ is an undirected
graph~\cite{Balogh_Bollobas_Saks_Sos.2009} or a
tournament~\cite{Boudabbous.Pouzet.2009}.  In this paper, we prove
that it holds for any relational structure $R$ admitting a finite
\emph{monomorphic decomposition}. Its age algebra is (essentially) a
graded subalgebra of a finitely generated polynomial algebra. It needs
not be finitely generated (see
Example~\ref{example.notfinitelygenerated}). Still, and this is our
main result, the profile is eventually a quasi-polynomial whose degree
is controlled by the dimension of the monomorphic decomposition of $R$
(Theorem~\ref{theorem.quasipolynomial}). This result was applied
in~\cite{Boudabbous.Pouzet.2009} to show that the above conjecture
holds for tournaments.

Relational structures admitting a finite monomorphic decomposition are
not so peculiar. Many familiar algebras, like invariant rings of
permutation groups, rings of quasi-symmetric polynomials, can be
realized this way.  We give many examples in
Appendix~\ref{appendix.examples}. Further studies, notably a
characterization of these relational structures for which the age
algebra is finitely generated are included
in~\cite{Pouzet_Thiery.AgeAlgebra2}.

The study of the profile started in the seventies;
see~\cite{Pouzet.2006.SurveyProfile} for a survey
and~\cite{Pouzet.2008.IntegralDomain} for more recent results. Our
line of work is parallel to the numerous researches made in recent
years about the behavior of counting functions for hereditary classes
made of finite structures, like undirected graphs, posets,
tournaments, ordered graphs, or permutations, which also enter into
this frame; see~\cite{Klazar.2008.Overview}
and~\cite{Bollobas.1998.HereditaryPropertiesOfGraphs} for a survey,
and~\cite{Balogh_Bollobas_Morris.2007,Balogh_Bollobas_Saks_Sos.2009,Marcus_Tardos.2004,Albert_Atkinson.2005,Albert_Atkinson_Brignall.2007,Vatter.2011,Kaiser_Klazar.2003,Klazar.2008,Brignall_Huczynska_Vatter.2008,Oudrar.Pouzet.2011}. These
classes are \emph{hereditary} in the sense that they contain all
induced structures of each of their members; in several instances,
members of these classes are counted up to isomorphism and with
respect to their size.

Results point out jumps in the behavior of these counting
functions. Such jumps were for example announced for extensive
hereditary classes in~\cite {Pouzet.1980}, with a proof for the jump
from constant to linear published in~\cite{Pouzet.RPE.1981}.  The
growth is typically polynomial or faster than any polynomial, though
not necessarily exponential (as indicates the partition function; see
Example~\ref{example.path}).  For example, the growth of an hereditary
class of graphs is either polynomial or faster than the partition
function~\cite{Balogh_Bollobas_Saks_Sos.2009}. In several instances,
these counting functions are eventually quasi-polynomials,
e.g.~\cite{Balogh_Bollobas_Saks_Sos.2009} for graphs
and~\cite{Kaiser_Klazar.2003} for permutations.

Klazar asked in his survey \cite{Klazar.2008.Overview} how the two
approaches relate. At first glance, the structures we consider are
more general; however the classes we consider in this paper are more
restrictive: the profile of a relational structure $R$ is the counting
function of its age, and ages are hereditary classes structures which
are up-directed by embeddability (\cite{Fraisse.1954}). A priori,
results on the behavior of counting functions of ages do not extend
straightforwardly to hereditary classes. There is a notable exception:
the study of hereditary classes with polynomially bounded profiles can
indeed be reduced to that of ages, thanks to the following result:
\begin{theorem}\label{hereditary/ideals}
  Consider an hereditary class $\mathfrak C$ of finite structures of
  fixed finite signature. If $\mathfrak C$ has polynomially bounded
  profile then it is a finite union of ages. Otherwise it contains an
  age with non polynomially bounded profile.
\end{theorem}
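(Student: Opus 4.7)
The plan is to analyze $\mathfrak C$ via its decomposition into maximal sub-ages. A Zorn's lemma argument shows that every hereditary class $\mathfrak C$ is the union of its maximal sub-ages: the union of an ascending chain of sub-ages of $\mathfrak C$ is again a sub-age (joint embeddability is inherited because any two of its members already lie in a single link of the chain), and for each $A \in \mathfrak C$ the principal age generated by $A$ is a sub-age of $\mathfrak C$, so Zorn yields a maximal sub-age through $A$. Hence $\mathfrak C = \bigcup_{i \in I} \mathfrak A_i$, where the $\mathfrak A_i$ are the maximal sub-ages of $\mathfrak C$.

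The crux of the proof is the following dichotomy: either $I$ is finite, or $\mathfrak C$ contains an age whose profile is not polynomially bounded. Assuming $I$ is infinite, I would extract a countable family $(B_n)_{n \geq 1}$ of pairwise \emph{incompatible} structures in $\mathfrak C$, meaning no member of $\mathfrak C$ jointly embeds two distinct $B_n$. Such a family exists because two structures admitting a common upper bound in $\mathfrak C$ must lie in every maximal sub-age containing that upper bound, so infinitely many distinct maximal sub-ages supply enough incompatible witnesses. I would then produce an age inside $\mathfrak C$ whose profile grows at least like the partition function, by locating within $\mathfrak C$ a disjoint-union-like construction that accommodates the $B_n$ inside a single sub-age. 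This last step is the main obstacle, since by construction the $B_n$ cannot be jointly embedded in any single member of $\mathfrak C$, so one must work either with alternative representatives or with a generalized sum operation adapted to the signature.

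Given the dichotomy, both halves of the theorem follow. If $\profile_{\mathfrak C}$ is polynomially bounded, then no sub-age of $\mathfrak C$ has super-polynomial profile, and the dichotomy forces $I$ to be finite, so $\mathfrak C$ is a finite union of ages. Conversely, if $\profile_{\mathfrak C}$ is not polynomially bounded, then either $I$ is infinite and the dichotomy directly yields a sub-age of $\mathfrak C$ with non-polynomial profile, or $I$ is finite and the estimate $\profile_{\mathfrak C}(n) \leq \sum_{i \in I} \profile_{\mathfrak A_i}(n)$ forces at least one $\mathfrak A_i$ to be non-polynomial.
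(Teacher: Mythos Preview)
Your overall architecture in the last paragraph is sound, but the dichotomy you state as the crux is not established, and your own sketch of it is self-defeating. You propose to extract pairwise \emph{incompatible} $B_n$ (no two admit a common upper bound in $\mathfrak C$) and then place them inside a single sub-age of $\mathfrak C$; but an age is by definition up-directed, so any two of its members have a common upper bound in it, hence in $\mathfrak C$. Thus no age of $\mathfrak C$ can contain even two of your $B_n$, and the ``disjoint-union-like construction'' you allude to cannot exist inside $\mathfrak C$. You correctly flag this as ``the main obstacle,'' but it is not an obstacle to be overcome: it is a contradiction showing that this line of attack cannot work.

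The paper's proof uses a different dichotomy, based on antichains rather than on the cardinality of the set of maximal sub-ages. If $\mathfrak C$ has no infinite antichain for embeddability, a classical result of Erd\H{o}s and Tarski on posets gives that $\mathfrak C$ is a finite union of ideals (ages). If $\mathfrak C$ does contain an infinite antichain, one invokes a Nash-Williams-type minimal-bad-sequence argument to find inside $\mathfrak C$ an age $\mathcal A'$ with no infinite antichain that nevertheless has infinitely many bounds (minimal obstructions). The substantial input is then Lemma~\ref{lemma.age.polynomially_bounded}: an age with polynomially bounded profile can be defined by finitely many obstructions. This is proved via the structure theory of almost multichainable relations (Lemmas~\ref{lemma.age_polynomially_bounded_multichainable} and~\ref{lemma.almost_multichainable_age}), and it is this lemma that does the real work your argument is missing. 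With it, $\mathcal A'$ must have non-polynomial profile, and the two halves of the theorem follow as in your final paragraph.
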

 
 The  proof is given in section~\ref{section.lemma}.
 
This paper is organized as follows. In Section~\ref{section.results}
we recall the definitions and basic properties of relational
structures, their profiles, and age algebras, and state our guiding
problems. We introduce the key combinatorial notion of
\emph{monomorphic decomposition} of a relational structure, mention
the existence of a unique minimal one
(Proposition~\ref{proposition.monodec}), and state our main theorem
(Theorem~\ref{theorem.quasipolynomial}) together with some other results. In
Section~\ref{section.monomorphic_decompositions}, we study the
properties of monomorphic decompositions and prove Proposition~\ref{proposition.monodec},
while Section~\ref{section.quasipolynomial} contains the proof of
Theorem~\ref{theorem.quasipolynomial}. Analyzing lots of examples has
been an essential tool in this exploration.
Appendix~\ref{appendix.examples} gathers them, with a description of
their age algebras; see in particular
Examples~\ref{example.notfinitelygenerated}, \ref{example.qsym}, and
\ref{example.nonCM.groupoid}, and
Proposition~\ref{proposition.planar_shuffle_algebra}.
We urge the reader to start by browsing them, and to
come back to them each time a new notion is introduced.
The overview of
the results in Table~\ref{table.overview} (which includes results
from~\cite{Pouzet_Thiery.AgeAlgebra2}) may be of help as well.

\subsection*{Acknowledgments}

A strong impulse to this research came from a paper by Garsia and
Wallach~\cite{Garsia_Wallach.2003} who proved that the ring of
quasi-symmetric polynomials is Cohen-Macaulay. Most, but not all, of
the results where previously announced at the FPSAC conference in
honor of Adriano Garsia~\cite{Pouzet_Thiery.AgeAlgebra.2005}, at the
14th Symposium of the Tunisian Mathematical Society, held in Hammamet
in 2006~\cite{Pouzet.2006.SurveyProfile}, and at
ROGICS'08~\cite{Pouzet_Thiery.AgeAlgebra.2008}.

We would like to thank the anonymous referee for a thorough report
including many helpful suggestions for improvements.

\begin{sidewaystable}
  \small
  \begin{bigcenter}
    \hspace{.5cm} %
    \makeatletter
% Center vertically respectively to the base line + .5 ex, that is the 
% middle of the line
\newcommand\myvcentermiddle[1]{%
  \setbox0\hbox{#1}%
  \dimen255=.5\dp0%
  \advance\dimen255 by -.5\ht0%
  \advance\dimen255 by .5ex%
  \ifvmode\hskip0em\fi\raise\dimen255\box0}
\makeatother
%
%TODO:
% - |X| = |X_\infty| for the Krull dimension?
% - Add growth rate
% - Add P(1)\ne 0
% - Add non finitely generated example?
%
\newcommand{\includegraphicsrelation}[1]{
  \myvcentermiddle{\includegraphics[height=10ex]{PICTURES/#1}}}
\newcommand{\tabtitle}[2]{\parbox[m]{#1}{\begin{center}#2\end{center}}}
\newcommand{\polyring}[2]{#1^{\makebox[0cm][l]{$\scriptscriptstyle\
      \leftarrow\in#2$}}}
\renewcommand{\footnoterule}{}
\renewcommand\thefootnote{\alph{footnote}}
\begin{tabular}{|m{25ex}|*{9}{@{\hspace{0.3ex}}c@{\hspace{0.3ex}}|}}
  \hline
  & % Improve that
    \tabtitle{10.5ex}{Relational structure} &
      \tabtitle{14ex}{\begin{center}local \mbox{isomorphisms}\end{center}} &
        \tabtitle{8ex}{Hilbert Series} &
          \tabtitle{10.5ex}{Finitely generated} &
            \tabtitle{7ex}{Degree bound} &
              \tabtitle{11ex}{Krull \mbox{dimension}} &
                \tabtitle{8ex}{Sym-module} &
                  \tabtitle{10ex}{Cohen-Macaulay} &
                    \tabtitle{8ex}{Finite SAGBI}
\\\hline
  $|X|<\infty$ &&&
        $\frac{\polyring{P(Z)}{\Z[Z]}}{(1-Z)\cdots(1-Z^{|X_\infty|})}$ &
%        \TODO{Avoid that tiny for the talk!}
          never\footnotemark[1]\footnotetext[1]{\tiny unless when stated otherwise
      below} &
            $=\infty$\footnotemark[1] &
	      $\leq |X_\infty|$ &
                no\footnotemark[2]\footnotetext[2]{\tiny here, "no" means
      "not always": there are both examples and counter-examples} &
		  no\footnotemark[2] &
		    no\footnotemark[2]\\\hline
  Hereditary optimal &&&
        $\frac{\polyring{P(Z)}{\Z[Z]}}{(1-Z)\cdots(1-Z^{|X_\infty|})}$ &
          yes &
            $<\infty$ &
	      $|X_\infty|$ &
                almost &
		  no\footnotemark[2] &
                    no\footnotemark[2]\\\hline
  Shape preserving &&&
        $\frac{\polyring{P(Z)}{\Z[Z]}}{(1-Z)\cdots(1-Z^{|X_\infty|})}$ &
          yes &
            $<\infty$ &
	      $|X_\infty|$ &
                yes &
		  no\footnotemark[2] &
                    no\footnotemark[2]\\\hline
  r-Quasi symmetric\vfil\mbox{polynomials}~\cite{Hivert.RQSym.2004} &
    \includegraphicsrelation{rqsym} &&
        $\frac{\polyring{P(Z)}{\N[Z]}}{(1-Z)\cdots(1-Z^{|X|})}$ &
          yes &
            $\leq\frac{|X|(|X|+2r-1)}{2}$ &
	      $|X|$ &
                yes &
		  yes &
                    no\\\hline
  Invariants of a \vfil\mbox{permutation groupoid $G$} &&
  %Examples: GQSym, parking?
      $G\wr \sg_\N$&
        $\frac{\polyring{P(Z)}{\Z[Z]}}{(1-Z)\cdots(1-Z^{|X|})}$ &
          yes &
            $\leq\frac{|X|(|X|+1)}{2}$ &
	      $|X|$ &
                yes &
		  no\footnotemark[2] &
                    \tabtitle{10ex}{never\footnotemark[1]}\\\hline
  Non Cohen-Macaulay\vfil\mbox{example}&
    \includegraphicsrelation{nonCM} &
      $\langle 1\mapsto 2\rangle \wr \sg_\N$ &
        $\frac{1+Z^2+Z^3- Z^4}{(1-Z)^2(1-Z^2)}$ &
          yes &
            $2$ &
	      $|X|$ &
                yes &
		  no &
                    no \\\hline
  Quasi symmetric\vfil\mbox{polynomials}~\cite{Gessel.QSym.1984} &
    \includegraphicsrelation{qsym} &
      $\operatorname{Inc} \wr \sg_\N$ &
        $\frac{\polyring{P(Z)}{\N[Z]}}{(1-Z)\cdots(1-Z^{|X|})}$ &
          yes&
            $\leq\frac{|X|(|X|+1)}{2}$ &
	      $|X|$ &
                yes &
		  yes~\cite{Garsia_Wallach.2003} &
		    no\\\hline
  Invariants of a\vfil\mbox{permutation group $G$} &&
      $G\wr \sg_\N$&
        $\frac{\polyring{P(Z)}{\in\N[Z]}}{(1-Z)\cdots(1-Z^{|X|})}$ &
          yes &
            $\leq\frac{|X|(|X|-1)}{2}$ &
	      $|X|$ &
                yes &
		  yes &
		    \tabtitle{10ex}{never\footnotemark[1] \cite{Thiery_Thomasse.SAGBI.2002}}\\\hline
  Symmetric\vfil\mbox{polynomials} &
    \includegraphicsrelation{sym} &
      $\sg_n\wr \sg_\N$&
        $\frac{1}{(1-Z)\cdots(1-Z^{|X|})}$ &
          yes &
            $|X|$ &
	      $|X|$ &
                yes &
		  yes &
		    yes\\\hline
  Polynomials &
    \includegraphicsrelation{poly} &
      $\id\wr \sg_\N$&
        $\frac{(1+Z)\cdots(1+Z+\dots+Z^{|X|})}{(1-Z)\cdots(1-Z^{|X|})}$ &
          yes &
            $1$ &
	      $|X|$ &
                yes &
		  yes &
		    yes\\\hline
\end{tabular}

%%% Local Variables: 
%%% mode: latex
%%% TeX-master: t
%%% End: 

  \end{bigcenter}
  \caption{Overview of the results}
  \label{table.overview}
\end{sidewaystable}

\section{Age algebras and quasi-polynomiality of relational structure
  admitting a finite monomorphic decomposition} \label{section.results}

\subsection{Relational structures and their profile}

A \emph{relational structure} is a realization of a language whose
non-logical symbols are predicates. This is a pair
$R:=(E,(\rho_{i})_{i\in I})$ made of a set $E$ and a family of
$m_i$-ary relations $\rho_i$ on $E$.  The set $E$ is the \emph{domain}
or \emph{base} of $R$; the family $\mu:= (m_i)_{i\in I}$ is the
\emph{signature} of $R$; the signature is \emph{finite} if $I$ is.
The \emph{substructure induced by $R$ on a
  subset $A$} of $E$, simply called the \emph{restriction of $R$ to
  $A$}, is the relational structure $R_{\restriction A}:= (A,
(A^{m_i}\cap \rho_i)_{i\in I})$.  The notion  of \emph{isomorphism} between relational structures 
is defined in the natural way. A
\emph{local isomorphism} of $R$ is any isomorphism between two restrictions
of $R$. Two relational structures $R$ and $R'$
are \emph{isomorphic} is there is an  isomorphism $f$ from $R$ onto
$R'$. We also say that they have the same \emph{isomorphism
  type}.
The \emph{isomorphism type} of a relational structure
is a formal object $\tau(R)$ such that  a relational structure $R'$ is
isomorphic to $R$ if and only if $\tau(R')=\tau (R)$. In some
situations, isomorphism types have a concrete representation. Let $R$
be as above. For two
subsets $A$ and $A'$ of $E$, we set $A\approx A'$ if $R_{\restriction
  A}$ and $R_{\restriction A'}$ are isomorphic. The \emph{orbit} of a
subset $A$ of $E$ is the set $\overline A$ of all subsets $A'$ of $E$
such that $A'\approx A$ (the name ``orbit'' is given by analogy with
the case of permutation groups; see Subsection~\ref{subsection:groups}).
The orbit $\overline A$ can play the role of the isomorphism type
$\type{A}:=\type{R_{\restriction A}}$ of $R_{\restriction A}$.

The \emph{profile} of $R$ is the function $\profile_R$ which counts,
for every integer $n$, the number $\profile_R(n)$ of isomorphism types
of restrictions of $R$ on $n$-element subsets.
Clearly, this
function only depends upon the set $\age(R)$ of isomorphism types of
finite restrictions of $R$; this set, called the \emph{age
  of $R$}, was introduced by R.~Fra{\"\i}ss{\'e}
(see~\cite{Fraisse.TR.2000}). If the signature of $R$ is finite, $\profile_R(n)$ is
necessarily finite. In order to capture examples coming from algebra
and group theory, we cannot preclude an infinite signature. However,
since the profile is finite in these examples, and unless explicitly
stated otherwise, \emph{we always make the assumption that
  $\profile_R(n)$ is finite, even if the signature of $R$ is not}.

The profile of an infinite relational structure is
non-decreasing. Furthermore, provided some mild conditions, there are
jumps in the behavior of the profile:
\begin{theorem}
  \label{profilpouzet1}
  Let $R := (E, (\rho_i)_{i \in I})$ be a relational structure on an
  infinite set. Then, $\profile_R$ is non-decreasing. And provided
  that either the signature $\mu$ is bounded or the kernel $K(R)$ of
  $R$ is finite, the growth of $\profile_R$ is either
  \emph{polynomial} or as fast as every polynomial.
\end{theorem}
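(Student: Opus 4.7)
The statement has two essentially independent pieces: monotonicity and the polynomial/super-polynomial dichotomy, and I would prove them in that order.

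\textbf{Non-decreasingness.} The plan is to exhibit, for each $n$, an injection (in fact, a linear embedding) from the $\profile_R(n)$-dimensional space spanned by $n$-types into the space spanned by $(n+1)$-types. Let $\age_n(R)$ denote the set of isomorphism types of $n$-element restrictions of $R$, and form the incidence matrix $M_n = (m_{\tau,\sigma})$ indexed by $\age_n(R)\times \age_{n+1}(R)$, where $m_{\tau,\sigma}$ is the number of $n$-subsets of (a representative of) $\sigma$ whose restriction is of type $\tau$. The monotonicity $\profile_R(n)\leq \profile_R(n+1)$ is equivalent to $M_n$ having full row rank. To prove this, I would pass to the age algebra $\agealgebra(R)$: its homogeneous component of degree $n$ has dimension $\profile_R(n)$, and Cameron's construction furnishes a particular element $e\in \agealgebra(R)_1$ (the sum of the singletons) whose multiplication is precisely given by $M_n$ up to scaling. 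Because $E$ is infinite, multiplication by $e$ has no kernel on finitely supported functions (any finite linear combination that vanished after multiplication would vanish already on a finite enough ambient set, contradicting the infiniteness of $E$). Hence each $M_n$ has full row rank and the profile is non-decreasing.

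\textbf{The dichotomy.} Assume $\profile_R$ is bounded by some polynomial; I need to show polynomial growth. The first reduction uses the kernel hypothesis. If $K(R)$ is finite, restricting to $E\setminus K(R)$ changes the profile only by a finite affine adjustment and ensures that for every $x\in E\setminus K(R)$, $\age(R_{\restriction E\setminus\{x\}})=\age(R)$. The alternative hypothesis (bounded signature) can be handled by first passing to a single relation coding the local information, which again makes the profile well-behaved under point removal.

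Once this reduction is in place, the plan is to apply a Ramsey/compactness argument. Consider any enumeration $(x_n)_{n\in\N}$ of (a piece of) $E$. Colour an $n$-subset of $\N$ by the isomorphism type of the corresponding restriction of $R$. Because $\profile_R$ is polynomially bounded, there are only polynomially many colours in each dimension; a Ramsey-theoretic extraction (iterated over $n$) produces an infinite subset $F\subseteq E$ all of whose $n$-subsets, for every $n$, have the same type — a \emph{monomorphic} part. The existence of such an $F$ then allows one to exhibit enough polynomial growth from below: by concatenating distinct "profiles" of where elements fall relative to $F$ one produces polynomially many distinct $n$-types, and conversely the whole structure decomposes, up to a finite remainder, into monomorphic parts that control the profile from above by a polynomial whose degree is one less than the number of parts.

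\textbf{Main obstacle.} The non-decreasingness reduces to a clean linear-algebra fact once the age algebra viewpoint is accepted, so that part is routine. The real work is the dichotomy: one must turn a statistical bound (polynomially many types) into a structural statement (existence of an infinite monomorphic piece or a finite monomorphic decomposition). The Ramsey extraction consumes the bounded-signature/finite-kernel hypothesis, and the delicate point is to iterate it across all $n$ simultaneously so as to obtain the same infinite set $F$ that is monomorphic in every arity — this is where the hypothesis is used most critically, and where one must be careful not to lose the information that the profile is polynomially bounded rather than merely finite.
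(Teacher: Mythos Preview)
Your treatment of non-decreasingness is essentially what the paper does: it quotes Cameron's result (Theorem~\ref{theorem.cameron}) that $e=\sum_{a\in E}\{a\}$ is not a zero-divisor in $\agealgebra(R)$, and deduces that multiplication by $e$ injects $\agealgebra(R)_n$ into $\agealgebra(R)_{n+1}$. So that half is fine.

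The dichotomy half, however, has a genuine gap. Your Ramsey extraction produces an infinite monomorphic subset $F$, but note that this uses only finiteness of the profile, not the polynomial bound; every relational structure with finite signature (or finite profile values) has such an $F$. Consequently the existence of $F$ by itself cannot distinguish polynomial growth from faster growth, and it does not supply the polynomial lower bound you claim. More seriously, your upper-bound step asserts that ``the whole structure decomposes, up to a finite remainder, into monomorphic parts''. This is false: polynomially bounded profile does \emph{not} force a finite monomorphic decomposition. The paper gives explicit counterexamples (e.g.\ Examples~\ref{example.QwreathG} and~\ref{example.notFinitelyGeneratedInfiniteDecomposition}) where the profile has polynomial growth yet there are infinitely many monomorphic components. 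So the argument you sketch cannot close.

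The paper's own route (sketched in Section~\ref{section.lemma} via Lemmas~\ref{lemma.age_polynomially_bounded_multichainable} and~\ref{lemma.almost_multichainable_age} and Facts~1--4) is quite different: one introduces the ordinal \emph{height} $h(\mathcal A)$ of an age and shows that $\profile_R$ grows as a polynomial of degree $k$ iff $\omega\cdot(k+1)\le h(\mathcal A(R))<\omega\cdot(k+2)$; the structural consequence is that $\mathcal A(R)$ is the age of an \emph{almost multichainable} structure, a notion strictly more general than ``finite monomorphic decomposition''. The finite-kernel/bounded-signature hypothesis enters there through a non-trivial induction on height (Fact~4), not through the simple removal you propose---indeed, deleting $K(R)$ changes the age by definition of the kernel, so your ``finite affine adjustment'' remark is backwards.
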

A map $\profile : \N\rightarrow \N$ has {\it polynomial growth}, of
{\it degree} $k$, if $an^k\leq \profile(n)\leq bn^k $ for some $a,b>0$
and $n$ large enough. The \emph{kernel} of $R$ is the set $K(R)$ of $x
\in E$ such that $\age(R_{\restriction E \setminus \{x\}}) \neq \age(R)$.
Relations with empty kernel are the \emph{age-inexhaustible} relations of
R.~Fra\"{\i}ss\'e'(see~\cite{Fraisse.TR.2000}). We call \emph{almost
age-inexhaustible} those with finite kernel.

The hypothesis about the kernel is not ad hoc. As it turns out, if the
growth of the profile of a relational structure with a bounded
signature is bounded by a polynomial then its kernel is finite.  Some
hypotheses on $R$ are needed, indeed \emph{for every increasing and
  unbounded map $\varphi: \N \rightarrow \N$, there is a relational
  structure $R$ such that $\profile_R$ is unbounded and eventually
  bounded above by $\varphi$} (see~\cite{Pouzet.RPE.1981}).  The
first part of the result was obtained in $1971$ by the first author
(see Exercise~8 p.~113~\cite{Fraisse.CLM1.1971}). A proof based on
linear algebra is given in~\cite{Pouzet.1976}. The second part was
obtained in \cite{Pouzet.TR.1978} and a part was published in
\cite{Pouzet.RPE.1981}.

The theorem above is not the best possible. It is natural to ask
\begin{question}
  \label{question1}
  Does $\profile_R$ have \emph{polynomial growth} in the strong sense:
  $\profile_R(n)\sim a n^{k}$ for some positive real $a$ whenever $R$
  has bounded signature or finite kernel and $\profile_R$ is bounded
  above by some polynomial.
\end{question}
This question, raised by Cameron \cite{Cameron.1990} in the special
case of orbital profile, is unsettled.

We consider a stronger condition. Recall that a map $\varphi:
\N\rightarrow \N$ is \emph{a quasi-polynomial} of degree $k$ if
$\varphi(n)=a_{k}(n)n^{k}+\cdots+ a_0(n)$ whose coefficients
$a_{k}(n), \dots, a_0(n)$ are periodic functions.
Note that, \emph{when a profile is eventually a quasi-polynomial, it
  has polynomial growth in the strong sense}. Indeed, since the
profile is non-decreasing, the coefficient $a_{k}(n)$ of highest
degree of the quasipolynomial is constant. We make the following:
\begin{conjecture}
  \label{conjecture.quasipolynomial}
  The profile of a relational structure with bounded signature or
  finite kernel is eventually a quasi-polynomial whenever the profile
  is bounded by some polynomial.
\end{conjecture}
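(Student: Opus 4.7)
My plan is to attack this conjecture in two stages: first a structural reduction from the growth hypothesis to a combinatorial regularity property, and then a commutative-algebra argument on the age algebra $\agealgebra(R)$.

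\textbf{Stage 1: Structural reduction.} I would aim to show that polynomial boundedness of $\profile_R$, together with finiteness of the kernel (or boundedness of the signature), forces $R$ to admit a \emph{finite monomorphic decomposition} in the sense defined later in the paper. Intuitively, slow growth of $\profile_R$ means that most finite subsets of $E$ of a given size give the same induced isomorphism type, and one should be able to carve $E$ into finitely many blocks on which $R$ is locally homogeneous. To make this precise, I would color finite subsets of $E$ by their isomorphism types and apply Ramsey-type arguments: polynomial growth limits the number of colors on $n$-subsets, so arbitrarily large subsets of $E$ must fall into a single class, and compactness (via Fra\"iss\'e-theoretic tools on $\age(R)$) should allow one to extract infinite monomorphic blocks. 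The finite kernel hypothesis would then let us iterate and cover $E$ by finitely many such blocks plus a finite exceptional set.

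\textbf{Stage 2: Quasi-polynomial conclusion.} Once $R$ has a finite monomorphic decomposition, one can apply Theorem~\ref{theorem.quasipolynomial} directly, so the conclusion follows. The mechanism is algebraic: even though $\agealgebra(R)$ need not be finitely generated (as the paper's Example~\ref{example.notfinitelygenerated} shows), it should sit as a graded subspace of a finitely generated module over a polynomial algebra $\K[X_1,\dots,X_d]$, where the variables are indexed by the infinite blocks. Its Hilbert series, by partial-fraction analysis of rational functions with denominator a product of $1-Z^{d_i}$, is then eventually a quasi-polynomial, which by the identification with the profile gives the desired statement.

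\textbf{Main obstacle.} Stage~1 is where essentially all the difficulty lies. The known cases — graphs \cite{Balogh_Bollobas_Saks_Sos.2009} and tournaments \cite{Boudabbous.Pouzet.2009} — were settled by arguments specific to those signatures, and extracting a clean Ramsey-style decomposition in full generality looks hard: one must control both the combinatorial complexity of the signature and the interaction between blocks. A promising indirect route is to work on $\agealgebra(R)$ itself, arguing that polynomial boundedness of the Hilbert function forces a finite transcendence-degree-type bound, and then translating this algebraic finiteness back into a combinatorial finite monomorphic decomposition; but making this rigorous for an algebra that is not assumed finitely generated is exactly the crux I do not see how to bypass, and is the reason the statement is conjectural rather than proved in this paper.
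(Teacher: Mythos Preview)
The statement you are addressing is a \emph{conjecture}, and the paper does not prove it; it only establishes the special case where $R$ already admits a finite monomorphic decomposition (Theorem~\ref{theorem.quasipolynomial}). So there is no ``paper's own proof'' to compare against, and the relevant question is whether your strategy could in principle work.

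Your Stage~1 is not merely hard: as stated, it is false. Polynomially bounded profile (even with finite kernel) does \emph{not} imply that $R$ admits a finite monomorphic decomposition. The paper gives explicit counterexamples: the infinite matching (Example~\ref{example.QwreathG}) has profile $\profile(n)=\lfloor n/2\rfloor+1$, yet its minimal monomorphic decomposition has infinitely many blocks (the individual edges); Example~\ref{example.notFinitelyGeneratedInfiniteDecomposition} is another instance with polynomial profile and infinitely many monomorphic components. So the reduction ``polynomial profile $\Rightarrow$ finite monomorphic decomposition $\Rightarrow$ Theorem~\ref{theorem.quasipolynomial}'' breaks at the first arrow, and no amount of Ramsey or compactness will rescue it.

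What the paper does establish (see Section~\ref{section.lemma}, Lemma~\ref{lemma.age_polynomially_bounded_multichainable}) is that polynomial profile plus finite kernel forces $R$ to have the same age as an \emph{almost multichainable} structure. This is the correct structural intermediate, and it is strictly weaker than admitting a finite monomorphic decomposition (Corollary~\ref{lem2} gives only the reverse implication, essentially). If you want to pursue your plan, you would need to prove quasi-polynomiality directly for almost multichainable structures, which does not reduce to Theorem~\ref{theorem.quasipolynomial}; this is precisely the gap that keeps Conjecture~\ref{conjecture.quasipolynomial} open.
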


This conjecture is motivated by the association made by Cameron of a
graded commutative algebra, the age algebra $\agealgebra(R)$, to a
relational structure $R$.  Indeed, if $A$ is a graded commutative
algebra, the \emph{Hilbert function} $h_A$ of $A$, where $h_A(n)$ is
the dimension of the homogeneous component of degree $n$ of $A$, is
eventually a quasi-polynomial whenever $A$ is finitely generated. In
fact, if $A$ is such an algebra, the generating series
$\hilbert_A(Z):=\sum_n h_A(n) Z^n$, called the \emph{Hilbert series}
of $A$, is a rational fraction of the form
\begin{equation}
  \label{equation.fraction.finitelyGenerated}
  \frac{P(Z)}{(1-Z^{n_1})(1-Z^{n_2})\cdots(1-Z^{n_k})}\ ,
\end{equation}
where $1=n_1\leq \cdots \leq n_k$ and $P(Z)\in\Z[Z]$ (see
e.g.~\cite[Chapter 9, \S 2]{Cox_al.IVA}).
Furthermore, whenever $A$ is Cohen-Macaulay, $P(Z)\in\N[Z]$ for some
choice of $n_1,\dots,n_k$. And, as it is also well known
(\cite{Stanley.1999.EnumerativeCombinatorics1}), a counting function
is eventually a quasi-polynomial of degree at most $k-1$ whenever its
generating series has the form
(\ref{equation.fraction.finitelyGenerated}) above.

As shown by Cameron, the Hilbert function of the age algebra of a
relational structure $R$ is the profile of $R$ (provided that it takes
only finite values). Thus, provided that the age algebra is finitely
generated, Conjecture~\ref{conjecture.quasipolynomial} and hence
Question~\ref{question1} have a positive answer. In fact, as we will
see, there are many relational structures for which the generating
series of their profile has the form
(\ref{equation.fraction.finitelyGenerated}) above, but few for which the age algebra is finitely generated.

We present now the age algebra.

\subsection{Age algebras}

Let $\K$ be a field of characteristic $0$, and $E$ be a set. For $n\geq
0$, denote by $[E]^n$ the set of the subsets of $E$ of size $n$, and let
$\K^{[E]^{n}}$ be the vector space of maps $f: [E]^n \rightarrow \K$. The
\emph{set algebra} is the graded connected commutative algebra
$\setalgebra:=\bigoplus_n \K^{[E]^{n}}$, where the product of $f:
[E]^m\rightarrow \K$ and $g: [E]^n\rightarrow \K$ is defined as $fg:
[E]^{m+n}\rightarrow \K$ such that:
\begin{equation}
  \label{eq:product}
  (fg)(A):= \sum_{(A_1,A_2) \suchthat  A=A_1\uplus A_2}  f(A_1)g(A_2) \,.
\end{equation}
Identifying a set $S$ with its characteristic function $\chi_S$,
elements of the set algebra can be thought as (possibly infinite but of
bounded degree) linear combination of sets; the unit is the empty set,
and the product of two sets is their disjoint union, or $0$ if their
intersection is non trivial.

Let $R$ be a relational structure with base set $E$. A map $f:
[E]^m\rightarrow \K$ is \emph{$R$-invariant} if $f(A)=f(A')$ whenever
$A\approx A'$. It is easy to show that the product of two
$R$-invariant maps is again invariant. The $\K$-vector space spanned
by the $R$-invariant maps is therefore a graded connected commutative
subalgebra of the set algebra, the \emph{age algebra of $R$}, that we
denote by $\agealgebra(R)$. It can be shown that two relational
structures with the same age yield the same algebra (up to an
isomorphism of graded algebras); thus the name, coined by Cameron who
invented the notion~\cite{cameron.1997}.  If the profile of $R$ takes
only finite values, then $\agealgebra(R)$ identifies with the set of
(finite) linear combinations of elements of $\age(R)$ and, as pointed
out by Cameron, $\profile_R(n)$ is the dimension of the homogeneous
component of degree $n$ of $\agealgebra(R)$; indeed, define an orbit sum as the characteristic function of an orbit; more specifically,  the
\emph{orbit sum} of an isomorphism type $\tau\in \age(R)$ is the
characteristic function $o_\tau := \sum_{A\in [E]^{<\omega} \suchthat
  \tau(A)= \tau} A$ of its representatives in $R$; then observe
that the set of orbit sums form a basis of the age algebra
$\agealgebra(R)$). By a slight abuse, we sometimes identify $\tau$
with its orbit sum to see it as an element of $\agealgebra(R)$.

Given three isomorphic types $\tau, \tau_1,\tau_2$, we define a
coefficient $c_{\tau_1,\tau_2}^\tau$ by taking any subset $A$ of $E$
of type $\tau$ and setting
\begin{equation}
  c_{\tau_1,\tau_2}^\tau := |\{ (A_1,A_2) \suchthat A_1\uplus A_2=A,
  \type{A_1}=\tau_1, \type{A_2}=\tau_2\}|\,.
\end{equation}
Clearly, this coefficient does not depend on the choice of $A$.
The collection $(c_{\tau_1,\tau_2}^\tau)_{\tau,\tau_1,\tau_2}$ are the
\emph{structure constants} of the age algebra:
\begin{equation}
  o_{\tau_1} o_{\tau_2} = \sum_{\tau} c_{\tau_1,\tau_2}^\tau o_{\tau}\,.
\end{equation}

Let us illustrates the role that the age algebra of Cameron can play.
Let $e:= \sum_{a\in E} \{a\}$, which we can think of as the sum of
isomorphic types of the one-element restrictions of $R$. Let $U$ be
the graded free algebra $\K[e]=\bigoplus_{n=0}^\infty \K e^n$.
Cameron (see~\cite{cameron.1997}) proved:
\begin{theorem}
  \label{theorem.cameron}
  If $R$ is infinite then $e$ is not a zero-divisor; namely for any
  $u\in \agealgebra(R)$, $e u=0$ if and only if $u=0$.
\end{theorem}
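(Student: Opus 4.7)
The plan is to prove that the multiplication map $\cdot e: \agealgebra(R)_n \to \agealgebra(R)_{n+1}$ is injective for every $n \geq 0$.  Since $\profile_R(n)$ is finite, any $u \in \agealgebra(R)_n$ writes uniquely as a finite linear combination $u = \sum_\tau \lambda_\tau o_\tau$ of orbit sums indexed by the types $\tau \in \age(R)$ of size $n$, with $u(A) = \lambda_{\type{A}}$ for every $A \in [E]^n$.  Unwinding the product formula~\eqref{eq:product}, for every $(n{+}1)$-subset $B$ of $E$ one finds
$$
(eu)(B) \;=\; \sum_{b\in B} u(B\setminus\{b\}) \;=\; \sum_{b\in B}\lambda_{\type{B\setminus\{b\}}},
$$
which depends only on $\type{B}$.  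Hence $eu = 0$ is equivalent to the finite linear system
$$
\sum_\tau m_{\sigma,\tau}\,\lambda_\tau \;=\; 0\qquad\text{for every type }\sigma\in\age(R)\text{ of size }n+1,
$$
where $m_{\sigma,\tau}$ denotes the number of sub-$n$-subsets of type $\tau$ contained in a representative of $\sigma$.

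\medskip\noindent
The task therefore reduces to showing that the finite multiplicity matrix $M = (m_{\sigma,\tau})$ has trivial right kernel.  I would proceed by contradiction, assuming some $\lambda_{\tau_0} \neq 0$, and exploit the infinitude of $E$ together with the finiteness of each layer of $\age(R)$.  Fix a representative $A_0 \in [E]^n$ of $\tau_0$.  A first pigeonhole pass over $E \setminus A_0$ yields an infinite $S_0 \subseteq E \setminus A_0$ on which $\type{A_0 \cup \{b\}}$ is constantly equal to some type $\sigma_0$ of size $n+1$.  A second pigeonhole pass produces an infinite $S \subseteq S_0$ such that, for every $a \in A_0$, the type $\type{(A_0 \setminus \{a\}) \cup \{b\}}$ equals a fixed type $\tau_a$ of size $n$ as $b$ ranges over $S$.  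Applying the $\sigma_0$-equation to any $B = A_0 \cup \{b\}$ with $b \in S$ then yields the stabilised identity
$$
\lambda_{\tau_0} + \sum_{a \in A_0} \lambda_{\tau_a} \;=\; 0.
$$

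\medskip\noindent
The main obstacle lies in leveraging such stabilised identities to actually force $\lambda_{\tau_0} = 0$, since a single equation may involve several types with nonzero coefficients.  My plan is to iterate the construction, treating each $\tau_a$ as a new $\tau_0$ and collecting the resulting swap relations into a linear system on the $\lambda_\tau$.  To make this terminate, I would equip the types of size $n$ in $\age(R)$ with a well-founded partial order (for instance, the one generated by common extendibility in size $n+1$) and peel off types starting from the minimal ones, for which an isolating equation of the form $(n+1)\lambda_\tau = 0$ should emerge.  The real combinatorial difficulty will be to verify that such an order is well-founded and that the infinitude of $E$ supplies the required extensions at every step.
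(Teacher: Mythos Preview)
First, note that the paper does not actually give a proof of this theorem: it is stated with attribution to Cameron~\cite{cameron.1997} and then only used. So there is nothing in the paper to compare your argument against directly; what follows compares your proposal with Cameron's original argument.

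Your setup is correct up to and including the stabilised identity
\[
\lambda_{\tau_0} + \sum_{a \in A_0} \lambda_{\tau_a} \;=\; 0.
\]
The gap is precisely where you locate it yourself: the plan to ``iterate, treating each $\tau_a$ as a new $\tau_0$'' and to terminate via some well-founded order on types is not a proof. There is no natural well-founded order on the types of size $n$ that is compatible with the swap operation $A_0 \mapsto (A_0\setminus\{a\})\cup\{b\}$; in general the iteration cycles (already with two types you can have $\tau_0\leftrightarrow\tau_1$ swapping back and forth), and a single linear relation per type is not enough to force all $\lambda_\tau=0$.

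The missing idea, which is the heart of Cameron's proof, is to vary the number of ``outside'' points rather than to iterate swaps. Concretely: strengthen your pigeonhole step so that for \emph{every} subset $I\subseteq A_0$ and every $k\geq 0$, the type of $I\cup J$ is constant as $J$ ranges over $[S]^k$ (this is still a finite iterated pigeonhole, since there are only finitely many pairs $(I,k)$ with $k\le n$ and only finitely many types in each degree). Now for each $m\geq 1$ pick $b_1,\dots,b_m\in S$ and set $B:=A_0\cup\{b_1,\dots,b_m\}$. Summing the equations $(eu)(C)=0$ over all $C\in[B]^{n+1}$ and counting how often each $D\in[B]^n$ appears gives
\[
0 \;=\; m\sum_{D\in[B]^n} u(D)
   \;=\; m\sum_{k=0}^{n}\binom{m}{k}\,c_k,
\qquad
c_k:=\sum_{I\in[A_0]^{\,n-k}} u(I\cup J_k),
\]
where $J_k$ is any $k$-subset of $\{b_1,\dots,b_m\}$. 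Thus $\sum_{k}\binom{m}{k}c_k=0$ for every integer $m\geq 1$; since the $\binom{m}{k}$ are linearly independent polynomials in $m$ and $\operatorname{char}\K=0$, all $c_k$ vanish, and in particular $c_0=u(A_0)=\lambda_{\tau_0}=0$, a contradiction. This replaces your hoped-for induction on types by a clean Vandermonde-type argument, and is what your proposal is missing.
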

This result implies that $\profile_R$ is non decreasing. Indeed, the
image of a basis of the vector space $\agealgebra(R)_n$ under
multiplication by $e$ is a linearly independent subset of
$\agealgebra(R)_{n+1}$.

The relationship between profile and age algebra is particularly
simple for relational structures with bounded profile. These
structures were characterized in~\cite{Fraisse_Pouzet.1971} for finite
signature and in~\cite[Th\'eor\`eme 1.2]{Pouzet.RPE.1981} for
arbitrary signature, by means of Ramsey theorem. We recall it below.

Let $R:= (E, (\rho_i)_{i \in I})$ be a relational structure, $F$
be a subset of $E$ and $B:=E\setminus F$. Following R.~Fra\"{\i}ss\'e, who invented these notions,  we say that $R$ is
$F$-\emph{monomorphic} if for every integers $n$ and every $A,A'\in
[B]^n$ there is an isomorphism from $R_{\restriction A\cup F}$
onto $R_{\restriction A'\cup F}$ which is the identity on $F$. We say that
$R$ is $F$-{\it chainable} if there is a linear order $\leq$ on $B$
such that the local isomorphisms of $(B, \leq)$ extended by the
identity on $F$ are local isomorphisms of $R$. We say that $R$ is
\emph{almost monomorphic}, resp. \emph{almost chainable},  if $R$ is
$F$-monomorphic, resp. $F$-chainable,  for some finite subset $F$ of
$E$. We say that $R$ is \emph{monomorphic}, resp.  \emph{chainable}, if
one can take $F$ empty.

Assume for example, that $R$ is made of a single relation $\rho$ and
is chainable by some linear order $\leq$. If $\rho$ is unary, then it
is the full or empty relation. If $\rho$ is binary, there are eight
possibilities: four if $\rho$ is reflexive ($\rho$ coincides with
either $\leq$, its opposite $\geq$, the equality relation $=$, or the
complete relation $E\times E$) and four if not (the same as for the
reflexive case, with loops removed).

For infinite relational structures, the notions of monomorphy and
chainability coincide~\cite{Fraisse.1954}. For finite structures, they
are distinct. However, it was proved by C.~Frasnay that, for any integer
$n$, there is an integer $f(n)$ such that every monomorphic relational
structure of arity at most $n$ and size at least $f(n)$ is chainable
(see~\cite{Frasnay.1965} and \cite[Chapter 13]{Fraisse.TR.2000}).

The following theorem links the profile with the age algebra in the
context of almost-chainable and almost-monomorphic relational
structures.
\begin{theorem}\label{theorem.boundedProfile}
  Let $R$ be a relational structure with $E$ infinite. Then, the
  following properties are equivalent:
  \begin{enumerate}
  \item[(a)] The profile of $R$ is bounded.
  \item[(b)] $R$ is almost-monomorphic.
  \item[(c)] $R$ is almost-chainable.
  \item[(d)] The Hilbert series is of the following form, with
    $P(Z)\in \N[Z]$ and $P(1)\ne 0$:
    \begin{displaymath}
      \hilbert_R=\frac {P(Z)} {1-Z}\ .
    \end{displaymath}
  \item[(e)] The age algebra is a finite dimensional free-module over
    the free-algebra $\K[e]$, where $e:=\sum_{a\in E} a$; in
    particular it is finitely generated and Cohen-Macaulay.
  \end{enumerate}
\end{theorem}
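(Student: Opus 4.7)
My plan is to split the proof into an algebraic triangle $(a)\Rightarrow(e)\Rightarrow(d)\Rightarrow(a)$, relying essentially on Cameron's Theorem~\ref{theorem.cameron}, together with the classical Fra\"{\i}ss\'e--Pouzet equivalences $(a)\iff(b)\iff(c)$. The main obstacle is the combinatorial implication $(a)\Rightarrow(b)$; for this I would cite the existing literature~\cite{Fraisse_Pouzet.1971, Pouzet.RPE.1981} rather than reproduce its Ramsey-theoretic proof.

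For the algebraic triangle I would begin with $(a)\Rightarrow(e)$. By Cameron's theorem, multiplication by $e$ is injective on $\agealgebra(R)$, so $n\mapsto\dim\agealgebra(R)_n$ is non-decreasing; being bounded by (a), it stabilizes at some value $r$ from a degree $n_0$ on. For each $n\geq 0$ pick a graded complement $W_n$ of $e\cdot\agealgebra(R)_{n-1}$ inside $\agealgebra(R)_n$ (with $W_0=\agealgebra(R)_0$). For $n\geq n_0$, multiplication by $e$ is bijective between consecutive homogeneous components (injective with equal finite dimension), so $W_n=0$. Injectivity of $e$ then yields the graded decomposition
\[
  \agealgebra(R) \;=\; \bigoplus_{n<n_0} W_n \otimes_\K \K[e],
\]
exhibiting $\agealgebra(R)$ as a free $\K[e]$-module of finite rank $\sum_{n<n_0}\dim W_n$. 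The implication $(e)\Rightarrow(d)$ is then immediate: a homogeneous $\K[e]$-basis of degrees $d_1,\dots,d_r$ gives
\[
  \hilbert_R(Z) \;=\; \frac{\sum_{i=1}^r Z^{d_i}}{1-Z} \;=\; \frac{P(Z)}{1-Z},
\]
with $P\in\N[Z]$ and $P(1)=r\geq 1$. Finally $(d)\Rightarrow(a)$ is a one-line partial fraction expansion: $P(Z)/(1-Z)=Q(Z)+P(1)/(1-Z)$ for some $Q\in\Z[Z]$, so $\profile_R(n)=P(1)$ for $n>\deg P$, and $\profile_R$ is bounded.

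For the combinatorial half, the equivalence $(b)\iff(c)$ reduces, after fixing a finite witness $F$, to Fra\"{\i}ss\'e's theorem that monomorphy and chainability coincide on infinite bases, applied to $R_{\restriction E\setminus F}$ enriched with unary predicates naming the elements of $F$; this is legitimate because $E\setminus F$ is infinite. The easy direction $(c)\Rightarrow(a)$ is direct: if $R$ is $F$-chainable, the isomorphism type of a finite $A\subseteq E$ depends only on $A\cap F$ and $|A\setminus F|$, bounding the profile by $2^{|F|}$. The hard direction $(a)\Rightarrow(b)$ is the Fra\"{\i}ss\'e--Pouzet theorem itself, which uses Ramsey's theorem to extract, outside a finite exceptional set, a single isomorphism type governing all $n$-element subsets. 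I would import this as a black box; the only combinatorial verification remaining is to check that the exceptional set produced does witness almost-monomorphy, which follows by tracing through the Ramsey argument and the definition of the profile.
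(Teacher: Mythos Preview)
Your proof is correct and follows essentially the same scheme as the paper: the algebraic cycle $(a)\Rightarrow(e)\Rightarrow(d)\Rightarrow(a)$ via Cameron's theorem, with the combinatorial equivalences $(a)\iff(b)\iff(c)$ imported from~\cite{Fraisse_Pouzet.1971,Pouzet.RPE.1981}. One small caveat: your sketched reduction of $(b)\Rightarrow(c)$ via ``unary predicates naming the elements of $F$'' is not quite right as stated (those elements do not live in $E\setminus F$; the correct encoding requires, for each $\rho_i$ and each partial assignment of coordinates to $F$, a new relation on $E\setminus F$), but this is harmless since the paper itself simply routes $(b)\Rightarrow(a)\Rightarrow(c)$ and cites the literature for $(a)\Rightarrow(c)$.
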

\begin{proof}
  Trivially, (c) implies (b) and (b) implies (a). The equivalence
  between (a) and (c) is in~\cite{Fraisse_Pouzet.1971} for finite
  signature and in~\cite[1.2 Th\'eor\`eme, p.~317]{Pouzet.RPE.1981}
  for arbitrary signature.

  Straightforwardly, (e) implies (d) and (d) implies (a).  Finally (e)
  follows from (a): indeed, by Theorem~\ref{theorem.cameron}, $e$ is
  not a zero divisor in $\agealgebra(R)$; using the grading, it
  follows that $\K[e]$ is a free algebra and that $\agealgebra(R)$ is
  a free-module over $\K[e]$. Since the profile is bounded, this
  free-module is finite dimensional.
\end{proof}

\subsection{Relational structures admitting a finite monomorphic decomposition}
\label{subsection.finiteMonomorphicDecomposition}

We now introduce a combinatorial notion which generalizes that of
almost-monomorphic relational structure.

Let $E$ be a set and $(E_x)_{x\in X}$ be a set partition of $E$. Write
$X_\infty:=\{x\in X\suchthat |E_x|=\infty\}$. For a subset $A$ of $E$,
set $d_x(A):=|A\cap E_x|$, $d(A):=(d_x(A))_{x\in X}$ and $\overline
d(A)$ be the sequence $d(A)$ sorted in decreasing order. The sequence
$\overline d(A)$ is the \emph{shape} of $A$ with respect to the
partition $(E_x)_{x\in X}$.  When $A$ is finite it is often
convenient, and even meaningful, to encode $d(A)$ as a monomial in
$\K[X]$, namely
$X^A := \prod_{x\in X} x^{d_x(A)}$; %
furthermore, up to trailing zero parts, $\overline d(A)$ is an integer
partition of $|A|$.

Let $R$ be a relational structure on $E$.  We call $(E_x)_{x\in X}$ a
\emph{monomorphic decomposition} of $R$ if the induced structures on
two finite subsets $A$ and $A'$ of $E$ are isomorphic whenever $X^A =
X^{A'}$. The following proposition states the basic result about
monomorphic decompositions; we will prove it in a slightly more
general setting (see
Proposition~\ref{proposition.lattice_monomorphic_decomposition}).
\begin{proposition}
  \label{proposition.monodec}
  There is a monomorphic decomposition of $R$ of which every other
  monomorphic decomposition of $R$ is a refinement.
\end{proposition}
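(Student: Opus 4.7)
The plan is to construct the coarsest monomorphic decomposition explicitly as the partition induced by an equivalence relation on $E$. Define a binary relation $\sim$ on $E$ by $a\sim b$ iff $a=b$, or else $R_{F\cup\{a\}}\cong R_{F\cup\{b\}}$ for every finite $F\subseteq E\setminus\{a,b\}$. I will show that $\sim$ is an equivalence relation, that its partition is a monomorphic decomposition of $R$, and that every monomorphic decomposition refines it; uniqueness of the coarsest one is then automatic.

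Reflexivity and symmetry of $\sim$ are immediate. Transitivity is the first real verification: assuming $a\sim b$ and $b\sim c$, fix a finite $F\subseteq E\setminus\{a,c\}$ and seek $R_{F\cup\{a\}}\cong R_{F\cup\{c\}}$. If $b\notin F$, the two hypotheses chain directly. If $b\in F$, write $F=F'\cup\{b\}$ and interpolate through $R_{F'\cup\{a,c\}}$, invoking $b\sim c$ with test set $F'\cup\{a\}$ (which avoids $b$ and $c$) and then $a\sim b$ with test set $F'\cup\{c\}$ (which avoids $a$ and $b$).

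The core step is that the partition $(E_x)_{x\in X}$ into $\sim$-classes is a monomorphic decomposition. Let $A,A'\subseteq E$ be finite with $X^A=X^{A'}$, i.e.\ $|A\cap E_x|=|A'\cap E_x|$ for each $x$. The shape matching provides a bijection $\phi:A\to A'$ with $a\sim\phi(a)$ for every $a\in A$. I induct on the number of non-fixed points $k(\phi):=|\{a\in A\suchthat\phi(a)\neq a\}|$. The case $k=0$ forces $A=A'$. For the inductive step, the crucial observation is that some non-fixed $a\in A$ satisfies $\phi(a)\notin A$: indeed, if every non-fixed $a$ had $\phi(a)\in A$, then any element of $A'\setminus A$ would have to be the image of a fixed point, which is impossible since fixed points have images in $A$. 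For such an $a$, set $A_1:=(A\setminus\{a\})\cup\{\phi(a)\}$; since $A\setminus\{a\}\subseteq E\setminus\{a,\phi(a)\}$, the definition of $a\sim\phi(a)$ gives $R_A\cong R_{A_1}$. The map $\phi$ then descends to a bijection $\phi_1:A_1\to A'$ that fixes $\phi(a)$ and agrees with $\phi$ elsewhere, inherits $\phi_1(y)\sim y$, and satisfies $k(\phi_1)=k(\phi)-1$, closing the induction.

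Finally, if $(F_y)_{y\in Y}$ is any monomorphic decomposition and $a,b\in F_y$, then for every finite $F\subseteq E\setminus\{a,b\}$ the sets $F\cup\{a\}$ and $F\cup\{b\}$ have identical $(F_y)$-shapes, so their restrictions are isomorphic; hence $a\sim b$, showing that $(F_y)_{y\in Y}$ refines $(E_x)_{x\in X}$. I expect the most delicate point to be the little combinatorial lemma about bijections used in the inductive step: it is precisely what guarantees that each elementary swap can be carried out within the definition of $\sim$ without leaving the base set, and without it the induction would stall.
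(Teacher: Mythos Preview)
Your argument is correct and complete, up to one tiny edge case in the inductive step: your ``crucial observation'' deduces a contradiction from the existence of an element of $A'\setminus A$, so it only fires when $A\neq A'$. If $A=A'$ (which can happen with $k(\phi)>0$, e.g.\ when $\phi$ permutes elements within a class), simply note that the conclusion $R_A\cong R_{A'}$ is trivial and move on; with that one-line patch the induction is airtight.

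Your route is genuinely different from the paper's. The paper abstracts the situation: it introduces \emph{monomorphic parts} (subsets $F$ such that $A\approx A'$ whenever $A\setminus F=A'\setminus F$ and $|A|=|A'|$), shows these satisfy a short list of ``goodness'' axioms---singletons are good, subsets of good sets are good, and a union of good sets with a common point is good---and then observes that for any collection satisfying these axioms the maximal members form a partition refined by every good partition. Your relation $a\sim b$ is exactly ``$\{a,b\}$ is a monomorphic part'', and your transitivity check is the two-element instance of their closure-under-union axiom. What the paper gains is reusability: the same axiomatic framework is applied later to restrictions of $R$ (Axioms~(d) and~(e)) to get Proposition~\ref{proposition.chain_good_set_partitions} and Lemma~\ref{lemma.minimalGrowthRate}. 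What you gain is directness: no abstraction layer, and a self-contained argument that does not require checking Lemma~\ref{lemma.set_partition_monomorphic_blocks} separately---your induction on $k(\phi)$ subsumes it.
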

This monomorphic decomposition is called \emph{minimal}; its number
$k:=k(R):=|X_\infty|$ of infinite blocks is the \emph{monomorphic
  dimension} of $R$.

Our main result, which we prove in
Section~\ref{section.quasipolynomial}, is a complete solution for
Conjecture~\ref{conjecture.quasipolynomial} for relational structures
admitting a finite monomorphic decomposition (the almost-monomorphic
ones being those of dimension $1$):
\begin{theorem}\label{theorem.quasipolynomial}
  Let $R$ be an infinite relational structure with a finite
  monomorphic decomposition, and let $k$ be its monomorphic dimension.
  Then, the generating series $\hilbert_R$ is a rational fraction of
  the following form, with $P\in\Z[Z]$ and $P(1)\ne 0$:
  \begin{displaymath}
    \frac{P(Z)}{(1-Z)(1-Z^2)\cdots(1-Z^k)}\ .
  \end{displaymath}
  In particular, $\profile_R$ is eventually a quasi polynomial of
  degree $k-1$, hence $\profile_R(n)\sim an^{k-1}$.
\end{theorem}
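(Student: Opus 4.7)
The plan is to realize $\agealgebra(R)$ as a graded sub-module of a finitely generated polynomial algebra and read off the Hilbert series via Noetherianity.

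\textbf{Setup and embedding.} Let $(E_x)_{x\in X}$ be the minimal monomorphic decomposition of $R$ provided by Proposition~\ref{proposition.monodec}, with $X_\infty = \{x_1,\ldots,x_k\}$. For each block set $e_x := \sum_{a\in E_x}\{a\}$ in the set algebra; a direct computation gives $e_x^m = m!\,\tilde{o}_{m\cdot x}$ where $\tilde{o}_s := e^s/s!$ denotes the ``shape sum''. In particular $e_x^{|E_x|+1}=0$ for finite $|E_x|$, so the resulting subalgebra $\mathcal{P} := \K[e_{x_1},\ldots,e_{x_k}]\otimes V_{\text{fin}}$ (with $V_{\text{fin}}$ finite-dimensional) is finitely generated, with Hilbert series $Q(Z)/(1-Z)^k$, $Q\in\N[Z]$, $Q(1)>0$. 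Because the monomorphic decomposition property forces each orbit sum $o_\tau$ to be a $\K$-linear combination of the $\tilde{o}_s$, one obtains $\agealgebra(R)\subseteq\mathcal{P}$ as a graded subalgebra. Classically, $\mathcal{P}$ is a free module of rank $k!\cdot\dim V_{\text{fin}}$ over the $\sg_k$-invariant subring $\K[\sigma_1,\ldots,\sigma_k]$, where $\sigma_j$ is the $j$-th elementary symmetric polynomial in the $e_{x_i}$, of degree $j$.

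\textbf{Primary invariants and main obstacle.} The technical heart is to exhibit a homogeneous system of parameters $\theta_1,\ldots,\theta_k$ of $\mathcal{P}$ with $\deg\theta_j=j$ and $\theta_j\in\agealgebra(R)$, so that $\agealgebra(R)$ becomes a $\K[\theta_1,\ldots,\theta_k]$-submodule of $\mathcal{P}$. In the favourable case where iso-equivalence of finite subsets implies equal sorted shape (so that iso-orbits of shapes refine the $\sg_k$-orbits on $\N^k$), the $\sigma_j$ are themselves sums of full orbit sums and one may take $\theta_j := \sigma_j$; this refinement rests on the canonicity of the minimal decomposition forcing local isomorphisms to permute blocks. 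In general, however, the refinement fails: iso-orbits may cross $\sg_k$-orbits and the $\sigma_j$ then lie outside $\agealgebra(R)$, which is exactly what allows the age algebra to fail to be finitely generated as an algebra (Example~\ref{example.notfinitelygenerated}). The $\theta_j$ must then be built from orbit sums tailored to the actual equivalence on shapes, degree by degree, using that the finite monomorphic decomposition bounds the number of ``merging'' patterns and guarantees, at each degree $j\le k$, an orbit sum algebraically independent of $\theta_1,\ldots,\theta_{j-1}$ that preserves $\agealgebra(R)$ under multiplication.

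\textbf{Conclusion.} Once such $\theta_j$ are in hand, $\agealgebra(R)$ is a $\K[\theta_1,\ldots,\theta_k]$-submodule of $\mathcal{P}$. Noetherianity of $\K[\theta_1,\ldots,\theta_k]$ and finite generation of $\mathcal{P}$ as a module over it imply that the submodule $\agealgebra(R)$ is itself a finitely generated module (even when not finitely generated as an algebra). By the Hilbert-Serre theorem for graded finitely generated modules over a polynomial ring in variables of degrees $1,\ldots,k$, the Hilbert series takes the form
\[
\hilbert_R(Z) = \frac{P(Z)}{(1-Z)(1-Z^2)\cdots(1-Z^k)},
\]
with $P\in\Z[Z]$ (negative coefficients allowed when the module is not Cohen-Macaulay). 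For $P(1)\neq 0$, the embedding yields the upper bound $\profile_R(n)\le\dim\mathcal{P}_n = \Theta(n^{k-1})$, while the finiteness of the monomorphic decomposition keeps the iso-orbits of shapes of uniformly bounded size, giving a lower bound $\profile_R(n)\ge \binom{n+k-1}{k-1}/C \sim c\,n^{k-1}$; together with the monotonicity of $\profile_R$ (Theorem~\ref{theorem.cameron}) this forces $\profile_R(n)\sim a n^{k-1}$ with $a>0$, hence $P(1)\ne 0$.
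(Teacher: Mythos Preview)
Your strategy has a genuine and fatal gap at the step you yourself flag as the ``technical heart'': the existence of homogeneous elements $\theta_1,\ldots,\theta_k\in\agealgebra(R)$ with $\deg\theta_j=j$ forming a system of parameters for $\mathcal{P}$. You do not construct them, and in fact they cannot exist in general. Observe that if such $\theta_j$ existed, then by your own Noetherian argument $\agealgebra(R)$ would be a finitely generated module over $\K[\theta_1,\ldots,\theta_k]\subseteq\agealgebra(R)$, and hence a finitely generated $\K$-algebra (generated by the $\theta_j$ together with the module generators). Your parenthetical ``even when not finitely generated as an algebra'' is therefore self-contradictory. But Example~\ref{example.notfinitelygenerated} (the wheel $K_{(1,\omega)}$ plus an infinite independent set) is precisely an $R$ with finite monomorphic decomposition, $k=2$, whose age algebra is \emph{not} finitely generated. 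Concretely, in that example there is a single orbit in degree~$1$, forcing $\theta_1$ proportional to $e=e_1+e_2+e_3$; and every degree~$2$ orbit sum reduces modulo $e_3$ to a scalar multiple of $(e_1+e_2)^2$, so no choice of $\theta_2\in\agealgebra(R)_2$ is algebraically independent of $\theta_1$ over $\K[e_1,e_2]\cong\mathcal{P}/(e_3)$. The obstruction is intrinsic, not a matter of cleverer bookkeeping.

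The paper circumvents this by abandoning the module-over-a-subring viewpoint entirely. Instead it works combinatorially: it equips each isomorphism type $\tau$ with a \emph{leading monomial} $\lm(\tau)=X^A$ (maximal for a shape-first, then lexicographic order among representatives), and proves the key stability lemma that multiplying a leading monomial by one of its ``layers'' $x_S$ yields another leading monomial. This lets one realize, for each fixed chain support $C$, the set $\lm_C$ of leading monomials as the basis of a monomial ideal in an auxiliary polynomial ring $\K[S_1,\ldots,S_l]$ with $\deg S_j=|S_j|$. The Hilbert series of each such ideal is read off directly (inclusion--exclusion on principal ideals), and summing over chains gives $\hilbert_R$ with denominator dividing $\prod_{j=1}^k(1-Z^j)$. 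No parameters inside $\agealgebra(R)$ are ever required, which is exactly what allows the argument to survive the non-finitely-generated cases. The lower bound $P(1)\ne0$ comes from the separate ``$d$-fat'' argument (Theorem~\ref{theorem.minimalGrowthRate}), not from a crude cardinality bound on iso-orbits of shapes as you sketch.
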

This refines Theorem~2.16 of~\cite{Pouzet_Thiery.AgeAlgebra.2005}, by
refining the denominator and the growth rate.

Among relational structures which admit a finite monomorphic
decomposition, those made of a single unary or binary relation are
easy to characterize (cf. Corollary~\ref{cor:binary}). For example,
the undirected graphs admitting a finite monomorphic decomposition are
the lexicographical sums of cliques or independent sets indexed by a
finite graph. Similarly, the tournaments admitting a finite
monomorphic decomposition are the lexicographical sums of acyclic (aka
transitive) tournaments indexed by a finite tournament.
In Section~\ref{thm:charcfinitemonomorph.proof} we prove the following generalization.
\begin{theorem}
 \label{thm:charcfinitemonomorph}
 A relational structure $R:=(E, (\rho_{i})_{i\in I})$ admits a finite
 monomorphic decomposition if and only if there exists a linear order $\leq$ on $E$ and a finite
 partition $(E_x)_{x\in X}$ of $E$ into intervals of $(E, \leq)$ such that every local isomorphism of $(E, \leq)$ which preserves each interval is a local isomorphism of $R$. Moreover,
 there exists such a partition whose number of infinite blocks is the
 monomorphic dimension of $R$.
\end{theorem}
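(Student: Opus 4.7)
The plan is to prove the two implications separately and verify the ``moreover'' clause by tracking the minimal monomorphic decomposition throughout.

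For $(\Leftarrow)$, suppose such a linear order $\leq$ and interval partition $(E_x)_{x\in X}$ exist. To show that $(E_x)_{x\in X}$ is a monomorphic decomposition, take any finite $A,A'\subseteq E$ with $X^A=X^{A'}$. Since each $E_x$ is an interval and $|A\cap E_x|=|A'\cap E_x|$, there is a unique block-preserving order-bijection $f\colon A\to A'$, mapping the $i$-th smallest element of $A\cap E_x$ to the $i$-th smallest element of $A'\cap E_x$. By hypothesis $f$ is a local isomorphism of $R$, so $R_{\restriction A}\cong R_{\restriction A'}$; in particular, the existence of such a partition bounds the monomorphic dimension of $R$ from above by the number of its infinite blocks.

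For $(\Rightarrow)$, I start from the minimal monomorphic decomposition $(E_x)_{x\in X}$ supplied by Proposition~\ref{proposition.monodec}, which has $|X_\infty|=k$ infinite blocks. The plan is to equip each $E_x$ with a linear order $\leq_x$ and then concatenate them according to an arbitrary enumeration of the finite set $X$, producing $\leq$ on $E$ with each $E_x$ an interval. The key observation is that for any finite $F\subseteq E\setminus E_x$ and any $B,B'\subseteq E_x$ of equal cardinality, the shapes $X^{B\cup F}=X^{B'\cup F}$ coincide, whence $R_{\restriction B\cup F}\cong R_{\restriction B'\cup F}$. Specializing to $F=\emptyset$, the restriction $R_{\restriction E_x}$ is monomorphic, and by the Fra\"{\i}ss\'e equivalence between monomorphy and chainability for infinite structures (cited before Theorem~\ref{theorem.boundedProfile}) it is chainable by some $\leq_x$ whenever $E_x$ is infinite. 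For general $F$ the same observation yields $F$-monomorphy of $R_{\restriction E_x\cup F}$, and hence $F$-chainability.

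The hard part will be to choose a \emph{single} order $\leq_x$ on each infinite $E_x$ that witnesses $F$-chainability \emph{simultaneously} for every finite $F\subseteq E\setminus E_x$. The plan is a compactness argument: for fixed $F$ the witness is essentially unique up to reversal and to finite perturbations absorbable into an enlargement of $F$, so along a cofinal chain $F_0\subseteq F_1\subseteq\cdots$ of finite subsets exhausting $E\setminus E_x$ a back-and-forth construction extracts a common witness $\leq_x$; the finiteness of $X$ keeps the overall family of constraints manageable. On each finite block, minimality of the decomposition forces its elements to be $R$-indistinguishable in their external interactions, so any linear order on it automatically makes block-preserving order-bijections $R$-respecting. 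Concatenating the $\leq_x$'s yields $\leq$ on $E$: any interval-preserving local isomorphism of $(E,\leq)$ is then a block-by-block order-isomorphism and hence a local isomorphism of $R$ by the $F$-chainability established above. Since $(E_x)_{x\in X}$ has exactly $k$ infinite intervals by construction, the ``moreover'' clause follows.
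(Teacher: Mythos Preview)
Your $(\Leftarrow)$ direction is correct and matches the paper. The $(\Rightarrow)$ direction has two genuine gaps.

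First, the treatment of finite blocks is wrong. Minimality of the decomposition does \emph{not} force the elements of a finite component to be ``$R$-indistinguishable in their external interactions'': a finite monomorphic component may well be, say, a $3$-cycle tournament, which is monomorphic but not chainable, so no linear order on it makes every order-preserving local bijection a local isomorphism of $R$. The paper avoids this simply by first refining every finite block into singletons; this is still a monomorphic decomposition, it leaves the infinite blocks untouched (so the ``moreover'' clause is preserved), and on a singleton block the required condition is vacuous.

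Second, and more fundamentally, the step ``the same observation yields $F$-monomorphy of $R_{\restriction E_x\cup F}$'' is not justified. Being a monomorphic part only gives $R_{\restriction B\cup F}\cong R_{\restriction B'\cup F}$ via \emph{some} isomorphism, whereas $F$-monomorphy demands an isomorphism that is the \emph{identity on~$F$}. The paper explicitly flags this distinction after Lemma~\ref{remark.definition.monomorphic_set}. Bridging it is precisely the hard implication $(iii)\Rightarrow(i)$ of Theorem~\ref{monomorphiccomp}, which requires $E_x$ to be a monomorphic \emph{component} and whose proof combines Ramsey's theorem, compactness of first-order logic (Lemma~\ref{lemma:compactness}), and a kernel analysis (Lemmas~\ref{lem:restrictkernel}--\ref{lem:kernelchainable}). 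Once that theorem is available it yields directly that $R$ is $(E\setminus E_x)$-chainable for each infinite component $E_x$, furnishing a single order $\leq_x$ on $E_x$ in one stroke; no separate back-and-forth along a chain of finite $F$'s is needed, and the claim that the witnessing order is ``essentially unique up to reversal and finite perturbations'' is both unnecessary and false in general. The paper then concatenates the $\leq_x$ lexicographically, exactly as you propose.
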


The interest for this notion goes beyond these examples and this
characterization.  It turns out that familiar algebras like invariant
rings of finite permutation groups can be realized as age algebras of
relational structures admitting a finite monomorphic decomposition
(see Example~\ref{example.permgroup}); another example is the algebra
of quasi-symmetric polynomials and variants (see
Examples~\ref{example.qsym} of Appendix~\ref{appendix.examples}).

The relationship with polynomials is not accidental:
\begin{proposition}[\cite{Pouzet_Thiery.AgeAlgebra2}]
  If $R$ admits a monomorphic decomposition into finitely many blocks
  $E_1,\dots, E_k$, all infinite, then the age algebra $\agealgebra(R)$ is
  isomorphic to a subalgebra $\K[x_1, \dots, x_k]^R$ of the algebra
  $\K[x_1, \dots, x_k]$ of polynomials in the indeterminates $x_1,
  \dots, x_k$.
\end{proposition}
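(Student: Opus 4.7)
The plan is to construct an explicit injective algebra homomorphism $\phi: \agealgebra(R) \hookrightarrow \K[x_1, \dots, x_k]$, from which the claim follows by taking $\K[x_1, \dots, x_k]^R := \phi(\agealgebra(R))$. The natural route is to factor $\phi$ through an intermediate subalgebra of the set algebra $\setalgebra$ spanned by ``shape indicators'', on which the target map has a transparent combinatorial description.

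For each shape $d = (d_1, \dots, d_k) \in \N^k$, let $s_d \in \setalgebra$ denote the characteristic function of the finite subsets of $E$ of shape $d$. Since each $E_i$ is infinite, $s_d \ne 0$; and the family $(s_d)_{d \in \N^k}$ is linearly independent because its members have pairwise disjoint supports in $[E]^{<\omega}$. A direct count yields
\begin{equation*}
  s_d \cdot s_{d'} = \binom{d+d'}{d}\, s_{d+d'}, \qquad \binom{d+d'}{d} := \prod_{i=1}^k \binom{d_i+d'_i}{d_i},
\end{equation*}
since a subset of shape $d+d'$ decomposes as a shape-$d$ piece plus a shape-$d'$ piece in exactly $\prod_i \binom{d_i+d'_i}{d_i}$ ways. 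Hence $S := \bigoplus_{d \in \N^k} \K s_d$ is a subalgebra of $\setalgebra$.

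Because $(E_i)$ is a monomorphic decomposition, any two shape-$d$ subsets lie in the same $R$-orbit; therefore every orbit of $R$ on finite subsets is a union of shape classes, and each orbit sum decomposes as $o_\tau = \sum_{d \in S_\tau} s_d$ for some $S_\tau \subseteq \N^k$, with the family $(S_\tau)_{\tau \in \age(R)}$ partitioning $\N^k$. In particular $\agealgebra(R)$ sits inside $S$ as a subalgebra. Define $\tilde\phi: S \to \K[x_1, \dots, x_k]$ on the basis by $\tilde\phi(s_d) := x^d/d!$, where $x^d := x_1^{d_1}\cdots x_k^{d_k}$ and $d! := d_1!\cdots d_k!$. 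The product formula above gives
\begin{equation*}
  \tilde\phi(s_d\, s_{d'}) = \binom{d+d'}{d}\frac{x^{d+d'}}{(d+d')!} = \frac{x^{d+d'}}{d!\, d'!} = \tilde\phi(s_d)\,\tilde\phi(s_{d'}),
\end{equation*}
so $\tilde\phi$ is an algebra homomorphism, and it is injective since it sends the basis $(s_d)$ to nonzero scalar multiples of distinct monomials. The restriction $\phi := \tilde\phi|_{\agealgebra(R)}$ is then the desired embedding.

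The argument presents no deep obstacle, but two points deserve care. First, the rescaling by $1/d!$ is essential: it is exactly what absorbs the multinomial coefficient from the shape-sum product formula and turns $\tilde\phi$ into a ring homomorphism. Second, this rescaling is precisely where the characteristic-zero hypothesis on $\K$ is used; in positive characteristic the factorials in the denominator could vanish, and the natural target would be a divided-power algebra rather than a polynomial ring.
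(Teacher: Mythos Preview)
Your proof is correct. Note, however, that the paper does not actually prove this proposition here: it is quoted from the companion paper~\cite{Pouzet_Thiery.AgeAlgebra2}, and the only in-text hint is the remark in Section~\ref{section.quasipolynomial} that ``the association $A\mapsto X^A$ makes the age algebra into (essentially) a subalgebra of $\K[X_\infty]$''. Your argument makes that ``essentially'' precise: the na\"ive map $s_d\mapsto x^d$ is not multiplicative because of the multinomial coefficient in $s_d\,s_{d'}=\binom{d+d'}{d}s_{d+d'}$, and your rescaling $s_d\mapsto x^d/d!$ is exactly what absorbs it. Equivalently, you are showing that the subalgebra $S=\bigoplus_d\K s_d$ is isomorphic to $\K[x_1,\dots,x_k]$ via $x_i\mapsto s_{e_i}$ (so that $x^d\mapsto d!\,s_d$), and then transporting $\agealgebra(R)\subseteq S$ across. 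Your observations that each $S_\tau$ is finite (since there are only finitely many shapes of a given degree) and that characteristic zero is genuinely used are both to the point.
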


Invariant rings of finite permutation groups, as well as the algebras
of quasi-symmetric polynomials are finitely generated and in fact
Cohen-Macaulay.
It is however worth noticing on the onset that there are examples of
relational structures such that
\begin{itemize}
\item $X$ is finite, but the age algebra is not finitely generated
  (see Example~\ref{example.notfinitelygenerated});
\item $X$ is finite, the Hilbert series is of the form of
  Equation~\ref{equation.fraction.finitelyGenerated} with
  $P(Z)\in\N[Z]$ but the age algebra is not
  Cohen-Macaulay (see Example~\ref{example.nonCM.groupoid});
\item $X$ is infinite but the profile still has polynomial growth (see
  Examples~\ref{example.QwreathG} and~\ref{example.notFinitelyGeneratedInfiniteDecomposition}).
\end{itemize}
In those examples, the profile is still a quasi-polynomial. This
raises the following problems.
\begin{problems}
  \label{problems}
  Let $R$ be a relational structure whose profile is bounded by some
  polynomial. Find combinatorial conditions on $R$ for
  \begin{itemize}
  \item[(a)] the profile to be eventually a quasi-polynomial;
  \item[(b)] the age algebra to be finitely generated;
  \item[(c)] the age algebra to be Cohen-Macaulay.
  \end{itemize}
\end{problems}
We give here a partial answer to (a), while (b) and (c) are
investigated in length in a subsequent
paper~\cite{Pouzet_Thiery.AgeAlgebra2}.

\section{Monomorphic decompositions}
\label{section.monomorphic_decompositions}

We start by stating some simple properties of set partitions. For the
sake of completeness, proofs are included.  Then, we apply those
results
to the lattice of monomorphic decompositions
(Proposition~\ref{proposition.lattice_monomorphic_decomposition}), and derive
a lower bound on the profile (Theorem~\ref{theorem.minimalGrowthRate})

\subsection{On certain lattices of set partitions}
\label{section.good_set_partitions}

Fix a set $E$, finite or not, and a collection $\goodsubset$ of
subsets of $E$ which are called \emph{good}. We assume that this
collection satisfies the following:
\begin{axioms}[Goodness axioms]
  \label{axioms.good}
  \begin{enumerate}[(a)]
  \item Singletons are good;
  \item A subset of a good subset is good;
  \item A (possibly infinite) union of good subsets with a non trivial
    intersection is good.
  \end{enumerate}
\end{axioms}
\begin{lemma}
  \label{lem:trivia}
  A collection $\goodsubset$ of subsets of $E$ satisfies
  Axioms~\ref{axioms.good} if and only if every subset of a maximal
  member of $\goodsubset$ belongs to $\goodsubset$ and the maximal
  members of $\goodsubset$ form a set partition of $E$.
\end{lemma}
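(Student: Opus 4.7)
The plan is to establish the equivalence in both directions, centered on a single construction: for each $a \in E$, I define the \emph{good saturation} $M_a := \bigcup\{A \in \goodsubset : a \in A\}$, which will simultaneously produce the maximal members of $\goodsubset$ and the blocks of the desired partition of $E$.

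For the forward direction, I would first use Axiom~(a) to secure $\{a\} \in \goodsubset$, so that $a \in M_a$; then, because every set in the union defining $M_a$ contains $a$, Axiom~(c) applies and gives $M_a \in \goodsubset$. Next I would verify that $M_a$ is maximal: any good $B \supseteq M_a$ contains $a$ and hence appears in the union, forcing $B \subseteq M_a$. I would then show that $b \in M_a$ implies $M_a = M_b$, since $M_a$ is a good set containing $b$, so $M_a \subseteq M_b$, and symmetrically. Consequently distinct $M_a$'s are either equal or disjoint, and as they cover $E$ they form a set partition; every maximal member of $\goodsubset$ coincides with some $M_a$ (for $a$ in a maximal $M$: $M \subseteq M_a$ by construction, with equality forced by maximality). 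Axiom~(b) then delivers that every subset of each $M_a$ is good, closing the forward implication.

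For the converse, I would read the characterization as the assertion $\goodsubset = \bigcup_\alpha 2^{M_\alpha}$, where the blocks $(M_\alpha)_\alpha$ partition $E$. Each axiom then follows routinely: (a) because the unique block through any $x$ contains $\{x\}$; (b) because any good $A$ lies in some $M_\alpha$, so any subset $B \subseteq A$ does too; and (c) because good sets sharing a point $a$ must all live in the unique block containing $a$, hence so does their union. The only step anywhere that really needs care is the application of Axiom~(c) to the (potentially enormous) union defining $M_a$, but the common point $a$ makes the non-trivial-intersection hypothesis automatic, so no genuine obstacle arises; the proof is otherwise short bookkeeping around the saturation operator $a \mapsto M_a$.
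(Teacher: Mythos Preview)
Your forward direction is correct and matches the paper's argument essentially verbatim: your saturation $M_a$ is exactly the paper's $\goodsubset(a)$, and the checks that these sets are good, maximal, and partition $E$ proceed identically.

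For the converse there is a small gap, which the paper's one-line ``The converse is immediate'' also glosses over. You assert that the right-hand side may be ``read as the assertion $\goodsubset = \bigcup_\alpha 2^{M_\alpha}$'', but the stated hypothesis only yields the inclusion $\bigcup_\alpha 2^{M_\alpha} \subseteq \goodsubset$; it does not by itself exclude further good sets that straddle several blocks and sit in an infinite ascending chain with no maximal element above them. For a concrete instance, take $E=\Z$ and
\[
\goodsubset \;=\; 2^{\Z_{<0}} \cup 2^{\Z_{\ge 0}} \cup \{\,[-n,n] : n\ge 1\,\}.
\]
The maximal members are $\Z_{<0}$ and $\Z_{\ge 0}$, which partition $E$, and every subset of each is good; yet $\{-1,1\}\subseteq[-1,1]\in\goodsubset$ while $\{-1,1\}\notin\goodsubset$, so Axiom~(b) fails. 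Thus the converse, read literally, is not true. What you actually verify---and what suffices for all later uses in the paper---is that if $\goodsubset$ \emph{equals} $\bigcup_\alpha 2^{M_\alpha}$ for some partition $(M_\alpha)$, then the axioms hold. Your argument for that is fine; just flag the interpretive step rather than presenting it as a restatement.
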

\begin{proof} 
  Suppose that $\goodsubset$ satisfy Axioms~\ref{axioms.good}
  above. The fact that subsets of maximal members of $\goodsubset$
  belong to $\goodsubset$ follows directly from (b). Now take some
  $a\in E$.  By (a) and (c), the union $\goodsubset(a)$ of all good subsets containing
  $a$ is good. Furthermore, any maximal good subset of $E$ (for
  inclusion) is of this form. Using (c), two maximal good subsets
  $\goodsubset(a)$ and $\goodsubset(b)$ either coincide or are
  disjoint. Hence, the collection $\minmorphdec(\goodsubset):=
  \{\goodsubset(a)\}_{a\in E}$ forms a set partition of $E$ into good
  subsets. The converse is immediate.
\end{proof}

Let $\goodsubset$ be a collection of subsets of $E$ satisfying
Axioms~\ref{axioms.good}. The \emph{components} of $\goodsubset$ are
the maximal members of $\goodsubset$.  We denote by $\mathcal P(\goodsubset)$ the partition
of $E$ into components of $\goodsubset$ and by $s(\goodsubset)$
the number of components,  that is the size of $\mathcal P(\goodsubset)$.
 
We consider the refinement order on the set $\setpartitions(E)$ of the
set partitions on $E$, choosing by convention that $\mathcal P\coarser
\mathcal Q$ if $\mathcal P$ is coarser than $\mathcal Q$ (that is each
block of $\mathcal P$ is a union of blocks of $\mathcal Q$). With this
ordering $\setpartitions(E)$ forms a lattice, with $\{E\}$ as minimal
element and $\{\{a\}\suchthat a\in E\}$ as maximal element.

A set partition $\mathcal P:=(P_x)_{x\in X}$ of $E$ is \emph{good} if
each block $P_x$ is good. For example, $\mathcal P(\goodsubset)$ is
good and so is the trivial partition into singletons. From Lemma~\ref{lem:trivia} above we have:
\begin{proposition}
  \label{proposition.good_set_partitions}
  A set partition $\mathcal P$ is good if and only if it refines
  $\minmorphdec(\goodsubset)$. Hence the set of all good set partitions is a
  principal filter of the lattice $\setpartitions(E)$; in particular,
  it is stable under joins and meets.
\end{proposition}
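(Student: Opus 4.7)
The plan is to reduce the statement directly to Lemma~\ref{lem:trivia}, proving the characterization of good set partitions as exactly the refinements of $\minmorphdec(\goodsubset)$, and then observing that this characterization immediately gives the filter property.

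For the forward direction, I would take a good partition $\mathcal P = (P_x)_{x\in X}$ and a block $P_x$; picking any $a\in P_x$, the good set $P_x$ is one of the good sets containing $a$, so $P_x \subseteq \goodsubset(a)$. Since $\goodsubset(a)$ is by Lemma~\ref{lem:trivia} one of the components of $\goodsubset$ (that is, a block of $\minmorphdec(\goodsubset)$), every block of $\mathcal P$ sits inside a block of $\minmorphdec(\goodsubset)$, which is the definition of $\mathcal P$ refining $\minmorphdec(\goodsubset)$.

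For the converse, assume $\mathcal P$ refines $\minmorphdec(\goodsubset)$. Then every block $P_x$ is contained in some component of $\goodsubset$, hence is a subset of a maximal member of $\goodsubset$; by the first conclusion of Lemma~\ref{lem:trivia}, $P_x$ is good. Therefore $\mathcal P$ is good. Note that $\minmorphdec(\goodsubset)$ itself is good, since its blocks are the components of $\goodsubset$ (which are good by axioms (a) and (c)).

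For the filter property, the characterization just proved says that, with respect to the refinement order $\coarser$ (where coarser means smaller), the set of good partitions is precisely $\{\mathcal Q \in \setpartitions(E) \suchthat \minmorphdec(\goodsubset) \coarser \mathcal Q\}$, i.e.\ the principal filter generated by $\minmorphdec(\goodsubset)$. Principal filters in any lattice are closed under both meets and joins, so the conclusion follows. There is essentially no obstacle here; the only point requiring care is the convention on $\coarser$: refinements of $\minmorphdec(\goodsubset)$ lie above it in this order, so one gets a principal filter (closed under joins and meets) rather than a principal ideal.
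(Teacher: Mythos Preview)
Your proof is correct and follows exactly the approach the paper indicates: the paper simply writes ``From Lemma~\ref{lem:trivia} above we have'' before stating the proposition, and you have spelled out precisely how Lemma~\ref{lem:trivia} yields both directions of the equivalence and the filter property. Your attention to the convention on $\coarser$ is apt and matches the paper's setup.
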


Due to this fact, we name $\minmorphdec(\goodsubset)$ the
\emph{minimal good set partition}.

Take now a set $E$, and associate to each subset $D$ of $E$ a
collection $\goodsubset_D$ of subsets of $D$, called \emph{$D$-good},
and satisfying the axioms above. Assume further that the following
axiom is satisfied:
\begin{axiom}[Goodness axioms, continued]
  \label{axioms.goodd}
  \begin{enumerate}[(d)]
  \item If $D_1\subseteq D_2\subseteq E$ and $F\subseteq D_2$ is $D_2$-good,
    then $F\cap D_1$ is $D_1$-good.
  \end{enumerate}
\end{axiom}

Take $D_1\subseteq D_2$. A set partition $\mathcal P$ of $D_2$ induces
(by intersection of each of its block with $D_1$ and removal of the
empty ones) a set partition of $D_1$. The latter is \emph{properly
  induced} by $\mathcal P$ if $D_1$ intersects non trivially each
block of $\mathcal P$ (that is no block vanishes in the process).

Axiom (d) implies right away the following properties.
\begin{proposition}
  \label{proposition.good}
  Let  $D_1\subseteq D_2 \subseteq D_3\subseteq E$.

  \begin{enumerate}[(a)]
  \item Any $D_2$-good set partition of $D_2$ induces a $D_1$-good set
    partition of $D_1$.
  \item The size of the minimal good set partition of $D_2$ is at least that
    of the minimal good set partition of $D_1$. If it is finite there is equality if and only if 
    the minimal good set partition $\minmorphdec(D_1)$ of $D_1$ is
    properly induced by $\minmorphdec(D_2)$.
  \item \label{remarks.good.intermediate} If $\minmorphdec(D_1)$ is
    properly induced by $\minmorphdec(D_3)$, then $\minmorphdec(D_1)$ is
    properly induced by $\minmorphdec(D_2)$ which itself is properly
    induced by $\minmorphdec(D_3)$
  \end{enumerate}
\end{proposition}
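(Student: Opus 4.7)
The plan is to handle the three parts in order, each building on the previous. For part (a), I would simply unfold the definitions: given a $D_2$-good partition $(P_x)_{x\in X}$ of $D_2$, the partition it induces on $D_1$ consists of the non-empty sets $P_x \cap D_1$, and each such intersection is $D_1$-good by axiom (d), so the induced partition is a $D_1$-good partition of $D_1$.

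For part (b), I would specialize (a) by setting $\mathcal{P} := \minmorphdec(D_2)|_{D_1}$, which is $D_1$-good. By Proposition~\ref{proposition.good_set_partitions}, $\minmorphdec(D_1) \coarser \mathcal{P}$, giving $|\minmorphdec(D_1)| \leq |\mathcal{P}|$. Since $|\mathcal{P}|$ is precisely the number of blocks of $\minmorphdec(D_2)$ that meet $D_1$, we also have $|\mathcal{P}| \leq |\minmorphdec(D_2)|$, and chaining the two inequalities gives the first claim. In the finite case, equality $|\minmorphdec(D_1)| = |\minmorphdec(D_2)|$ forces both intermediate inequalities to be equalities: the second says every block of $\minmorphdec(D_2)$ meets $D_1$, and the first combined with $\minmorphdec(D_1) \coarser \mathcal{P}$ forces $\mathcal{P} = \minmorphdec(D_1)$, since a refinement of equal finite cardinality must coincide. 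Together these are the definition of $\minmorphdec(D_1)$ being properly induced by $\minmorphdec(D_2)$; the converse is tautological.

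For part (c), I would set $\mathcal{Q} := \minmorphdec(D_3)|_{D_2}$. By (a), $\mathcal{Q}$ is $D_2$-good, hence $\minmorphdec(D_2) \coarser \mathcal{Q}$. Since every block of $\minmorphdec(D_3)$ meets $D_1 \subseteq D_2$, each meets $D_2$, so $\mathcal{Q}$ is in bijection with the blocks of $\minmorphdec(D_3)$. Transitivity of restriction gives $\mathcal{Q}|_{D_1} = \minmorphdec(D_3)|_{D_1} = \minmorphdec(D_1)$ by hypothesis. A refinement sandwich $\minmorphdec(D_1) \coarser \minmorphdec(D_2)|_{D_1} \coarser \mathcal{Q}|_{D_1} = \minmorphdec(D_1)$ then forces $\minmorphdec(D_2)|_{D_1} = \minmorphdec(D_1)$. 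Finally, a block-by-block argument shows $\mathcal{Q} = \minmorphdec(D_2)$: any block $B$ of $\minmorphdec(D_2)$ is a disjoint union of some blocks $Q_i$ of $\mathcal{Q}$ arising from distinct blocks $R_i$ of $\minmorphdec(D_3)$, and $B \cap D_1 = \bigsqcup_i (R_i \cap D_1)$ must be a single block of $\minmorphdec(D_1)$; since each $R_i \cap D_1$ is non-empty, there can be only one summand, whence $B = Q_i$ and $B \cap D_1 \neq \emptyset$. This simultaneously yields $\mathcal{Q} = \minmorphdec(D_2)$ (so $\minmorphdec(D_2)$ is properly induced by $\minmorphdec(D_3)$) and that every block of $\minmorphdec(D_2)$ meets $D_1$ (so $\minmorphdec(D_1)$ is properly induced by $\minmorphdec(D_2)$).

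The main obstacle is the bookkeeping in part (c): tracking refinement directions together with the "no block vanishes" condition at the same time. Once the convention is fixed that every $D$-good partition refines $\minmorphdec(D)$, the argument reduces to a routine diagram chase in the lattice of set partitions.
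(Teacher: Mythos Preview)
Your argument is correct and is exactly the natural unfolding of the definitions via Axiom~(d) and Proposition~\ref{proposition.good_set_partitions}; the paper itself gives no proof beyond the remark that ``Axiom~(d) implies right away the following properties,'' so you have simply supplied the details the authors deemed routine. The only minor comment is that your part~(c) is cleaner than a cardinality argument would be, since it works without any finiteness assumption (as the statement requires).
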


To derive the main desired property, we need to add a last axiom on
good subsets.
\begin{axiom}[Goodness axioms, continued]
  \label{axioms.goode}
  \begin{enumerate}[(e)]
  \item If $\mathcal D$ is a chain of subsets of $E$ whose union is $D$, then a
    subset $F$ of $D$ is $D$-good as soon as $F\cap D'$ is $D'$-good
    for all $D'\in \mathcal D$.
  \end{enumerate}
\end{axiom}

\begin{lemma}
  \label{lemma.cardinal}
  Every subset $D$ of $E$ such that $s(\goodsubset_{D})$ is finite includes some finite subset $F$ such that
  $s(\goodsubset_{F})=s(\goodsubset_{D})$.
\end{lemma}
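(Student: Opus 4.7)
The plan is to build a finite $F \subseteq D$ that meets every block of $\minmorphdec(D)$ and keeps representatives of distinct blocks separated in $\minmorphdec(F)$. Write $\minmorphdec(D) = \{B_1, \dots, B_k\}$ with $k := s(\goodsubset_D)$ and choose $b_i \in B_i$. By Proposition~\ref{proposition.good}(b), any finite $F \subseteq D$ already satisfies $s(\goodsubset_F) \leq k$, so it suffices to find $F$ with $s(\goodsubset_F) \geq k$.

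The gluing step is straightforward: assume that for each pair $i \neq j$ one can exhibit a finite $F_{ij} \subseteq D$ containing $b_i, b_j$ such that $b_i, b_j$ lie in distinct blocks of $\minmorphdec(F_{ij})$. Set $F := \{b_1, \dots, b_k\} \cup \bigcup_{i \neq j} F_{ij}$. Since $F_{ij} \subseteq F$, Proposition~\ref{proposition.good}(a) shows that $\minmorphdec(F)$ induces an $F_{ij}$-good partition on $F_{ij}$, which by Proposition~\ref{proposition.good_set_partitions} refines $\minmorphdec(F_{ij})$; therefore $b_i$ and $b_j$ remain in distinct blocks of $\minmorphdec(F)$. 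All $k$ representatives being separated yields $s(\goodsubset_F) \geq k$, hence equality.

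The crux is the existence of $F_{ij}$, which I would establish by contradiction. If no such finite $F_{ij}$ exists for some pair $i \neq j$, then $\{b_i, b_j\}$ is $F$-good for every finite $F \subseteq D$ containing it (Axiom~\ref{axioms.good}(b) equates ``$b_i, b_j$ are in the same block of $\minmorphdec(F)$'' with ``$\{b_i, b_j\}$ is $F$-good''). I would promote this to $\{b_i, b_j\}$ being $D$-good, contradicting $b_i \in B_i \neq B_j \ni b_j$. The promotion is proved by transfinite induction on $|D|$: for $|D|$ finite it is trivial (take $F = D$); for $|D|$ infinite, choose a chain $(D_\alpha)_{\alpha < \operatorname{cof}(|D|)}$ of subsets of $D$ each containing $\{b_i, b_j\}$, each of cardinality strictly less than $|D|$, and with union $D$; apply the inductive hypothesis to each $D_\alpha$ to get that $\{b_i, b_j\}$ is $D_\alpha$-good; then conclude via Axiom~\ref{axioms.goode}.

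The main obstacle is this transfinite induction. Axiom~\ref{axioms.goode} handles unions along chains well but says nothing about extending a set by a single element at an infinite stage, so an element-by-element induction on a well-ordering of $D$ breaks down at infinite successor ordinals; inducting instead on cardinality and extracting a chain of subsets of strictly smaller cardinality via cofinality sidesteps this cleanly.
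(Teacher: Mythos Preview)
Your proof is correct and, at its core, uses the same engine as the paper's: transfinite induction on $|D|$, writing $D$ as the union of a chain of strictly smaller subsets, and invoking Axiom~\ref{axioms.goode} to pass to the limit. The organization, however, is genuinely different. The paper argues globally: along such a chain the integers $s(\goodsubset_{D'})$ eventually stabilize at some value $k$, the minimal partitions at that stage are then consistently nested (via Proposition~\ref{proposition.good}(b)), and Axiom~\ref{axioms.goode} glues their blocks into a $D$-good partition with $k$ parts, forcing $k=s(\goodsubset_D)$. You instead localize the problem to pairs: the auxiliary lemma ``if $\{b_i,b_j\}$ is $F$-good for every finite $F\supseteq\{b_i,b_j\}$ then it is $D$-good'' isolates exactly what Axiom~\ref{axioms.goode} buys, and the assembly of the $F_{ij}$ into a single $F$ is then purely finitary. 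Your route avoids the bookkeeping of coherently labelling blocks along the chain; the paper's route avoids introducing representatives and the $\binom{k}{2}$ separate witnesses. Both are clean; yours extracts a reusable two-point compactness statement that the paper's version leaves implicit.
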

\begin{proof}
  We argue by induction on the cardinality $\kappa$ of $D$. If
  $\kappa$ is finite there is nothing to prove. Suppose otherwise that
  $\kappa$ is infinite, and write $D$ as the union of a chain
  $\mathcal D$ of subsets $D'$ of $D$, such that $|D'|<\kappa$.  By
  induction, we may assume that the property holds for each $D'$ in
  $\mathcal D$. There remains to prove that $s(\goodsubset_{D'})
  =s(\goodsubset_{D})$ for some $D'\in\mathcal D$. According
  to Proposition~\ref{proposition.good} (a) we have
  $s(\goodsubset_{D'})\leq s(\goodsubset_{D''})\leq
  s(\goodsubset_D)$  for all $D'\subseteq D''\subseteq D$. Since $s(\goodsubset_D)$ is finite, there is some
  $k$ and some $D'\in \mathcal D$ such that $s(\goodsubset_{D''})= k$
  for all $D''\in\mathcal D$ such that $D'\subset D''$. Set
  $X:=\{1,\dots,k\}$. For each such $D''$, denote $(D''_{x})_{x\in X}$
  the minimal good set partition $\minmorphdec(\goodsubset_{D''})$ of
  $D''$.  Without loss of generality, and up to some renumbering of
  the blocks, we may assume that, for each $x\in X$, the set $\mathcal
  D_x:=\{ D''_x \suchthat D' \subseteq D''\in \mathcal D \}$ forms a chain for
  inclusion. Using Axiom (e), the union $D_x$ of those is a $D$-good
  block. Then, $(D_x)_{x\in X}$ is a good set partition of $D$ into
  $k$ blocks and thus it is the minimal one, proving that
  $s(\goodsubset_{D'})=k=s(\goodsubset_{D})$, as desired.
\end{proof}

\begin{proposition}
  \label{proposition.chain_good_set_partitions}
  Assume that $E$ has a good partition into finitely many blocks, and
  let $\mathcal F$ be the collection of all subsets $D$ of $E$ whose
  minimal good set partition is properly induced by that of $E$. Then,
  a subset $D$ is in $\mathcal F$ if and only if it includes a finite
  subset in $\mathcal F$.
\end{proposition}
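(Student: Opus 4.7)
The plan is to reduce the statement to a cardinality count using Proposition~\ref{proposition.good}(b): since $s(\goodsubset_E)$ is finite by hypothesis, for every subset $D$ of $E$ one has $s(\goodsubset_D)\le s(\goodsubset_E)<\infty$, and membership $D\in\mathcal F$ is equivalent to the equality $s(\goodsubset_D)=s(\goodsubset_E)$. Both directions then follow almost immediately.

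For the reverse direction, assume that $D$ contains a finite subset $F\in\mathcal F$. Applying Proposition~\ref{proposition.good}(b) twice along the chain $F\subseteq D\subseteq E$ gives
\begin{equation*}
s(\goodsubset_E)=s(\goodsubset_F)\le s(\goodsubset_D)\le s(\goodsubset_E),
\end{equation*}
so $s(\goodsubset_D)=s(\goodsubset_E)$ and thus $\minmorphdec(D)$ is properly induced by $\minmorphdec(E)$, i.e.\ $D\in\mathcal F$. (Alternatively one may invoke Proposition~\ref{proposition.good}(c) directly with the triple $F\subseteq D\subseteq E$.)

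For the forward direction, assume $D\in\mathcal F$, so that $s(\goodsubset_D)=s(\goodsubset_E)$ is finite. Lemma~\ref{lemma.cardinal}, applied to $D$, produces a finite subset $F\subseteq D$ with $s(\goodsubset_F)=s(\goodsubset_D)=s(\goodsubset_E)$. Applying Proposition~\ref{proposition.good}(b) to the pair $F\subseteq E$, the equality $s(\goodsubset_F)=s(\goodsubset_E)$ means precisely that $\minmorphdec(F)$ is properly induced by $\minmorphdec(E)$, so $F\in\mathcal F$, as required.

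I do not expect any serious obstacle here: the real work is already carried out in Lemma~\ref{lemma.cardinal} (which handles the transfinite chain argument via Axiom~\ref{axioms.goode}(e)) and in the "proper inducedness $\Leftrightarrow$ equality of block counts" content of Proposition~\ref{proposition.good}. The only point to double-check is that the hypothesis "$E$ has a good partition into finitely many blocks" is what guarantees $s(\goodsubset_E)<\infty$, so that Lemma~\ref{lemma.cardinal} is applicable to $D$ (via $s(\goodsubset_D)\le s(\goodsubset_E)<\infty$).
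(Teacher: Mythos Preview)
Your proposal is correct and follows essentially the same route as the paper: the paper's proof is the terse two-liner ``the `if' part follows from Proposition~\ref{proposition.good}; the converse follows from Lemma~\ref{lemma.cardinal},'' and you have simply unpacked this by making explicit the translation (via Proposition~\ref{proposition.good}(b)) between membership in $\mathcal F$ and the equality $s(\goodsubset_D)=s(\goodsubset_E)$.
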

\begin{proof}
  The ``if'' part follows from Proposition~\ref{proposition.good}. The
  converse follows from Lemma~\ref{lemma.cardinal}.
\end{proof}

\subsection{Minimal monomorphic decompositions}

We start with some elementary properties of the monomorphic
decompositions of a relational structure $R$ with base set $E$, and in
particular we show that their blocks satisfy the goodness axioms.

\begin{lemma}
  \label{remark.definition.monomorphic_set}
  Let $R$ be a relational structure with base set $E$. For a subset $F$ of $E$, the following conditions are equivalent:
  \begin{enumerate}[(i)]
  \item there exists some monomorphic decomposition of $R$ admitting $F$ as
   a  block;
  \item the set partition $\{F\} \cup \{ \{x\} \}_{x\not\in F}$ is a
    monomorphic decomposition of $R$;
  \item for every pair $A, A'$ of subsets of $E$  with the same finite cardinality the induced structures on $A$ and $A'$ are isomorphic whenever
    $A\setminus F=A'\setminus F$.
  \end{enumerate}
\end{lemma}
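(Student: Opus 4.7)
The plan is to prove the circular chain of implications (ii) $\Rightarrow$ (i) $\Rightarrow$ (iii) $\Rightarrow$ (ii). The only substantive content is unpacking what "having the same shape with respect to a given decomposition'' means in each direction; no new machinery beyond the definition of monomorphic decomposition is needed.

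The implication (ii) $\Rightarrow$ (i) is immediate, since the set partition exhibited in (ii) is itself a monomorphic decomposition having $F$ as a block.

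For (i) $\Rightarrow$ (iii), I would take a monomorphic decomposition $(E_x)_{x\in X}$ whose index set contains some $x_0$ with $E_{x_0}=F$, and let $A,A'$ be finite subsets of $E$ with $|A|=|A'|$ and $A\setminus F=A'\setminus F$. For every $x\neq x_0$ the block $E_x$ is disjoint from $F$, so $A\cap E_x\subseteq A\setminus F=A'\setminus F$ and a symmetric argument yields $A\cap E_x=A'\cap E_x$; and from $|A|=|A'|$ together with $|A\setminus F|=|A'\setminus F|$ one gets $|A\cap F|=|A'\cap F|$, that is $d_{x_0}(A)=d_{x_0}(A')$. Thus $X^A=X^{A'}$, and by the definition of a monomorphic decomposition the structures induced on $A$ and $A'$ are isomorphic.

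For (iii) $\Rightarrow$ (ii), I would consider the partition $\mathcal{P}:=\{F\}\cup\{\{x\}\}_{x\notin F}$ and check directly that it is monomorphic. If two finite subsets $A,A'$ satisfy $X^A=X^{A'}$ with respect to $\mathcal P$, then $|A\cap\{x\}|=|A'\cap\{x\}|$ for every $x\notin F$ forces $A\setminus F=A'\setminus F$, while $|A\cap F|=|A'\cap F|$ together with this equality gives $|A|=|A'|$; hypothesis (iii) then delivers the required isomorphism. I do not expect any genuine obstacle: the only place one must be careful is to keep track of why $|A|=|A'|$ in the hypothesis of (iii)---it is needed in the step (i) $\Rightarrow$ (iii) to conclude that the $F$-parts have the same size, and it is supplied for free in the step (iii) $\Rightarrow$ (ii) by summing the coincidences $d_x(A)=d_x(A')$ over the partition $\mathcal P$.
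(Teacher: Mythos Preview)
Your proof is correct; each implication is handled cleanly and the care you take with the equality $|A|=|A'|$ is exactly what is needed. The paper itself does not supply a proof for this lemma, treating the equivalences as immediate from the definition of a monomorphic decomposition, so your argument fills in precisely the routine verification the authors left to the reader.
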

A subset $F$ is  a \emph{monomorphic
 part of $R$} if any (and therefore all) of the previous conditions hold.

Note that in $(iii)$ above, we do not not impose that the induced
substructures on $A$ and $A'$ are isomorphic via an isomorphism which
is the identity on $A\setminus F$. However, this condition is
fulfilled as soon as $F$ is an infinite monomorphic part which is
maximal w.r.t. inclusion (see Theorem~\ref{monomorphiccomp}; beware of
a different use of the letter $F$ in this result).
\begin{lemma}
  \label{lemma.set_partition_monomorphic_blocks}
  A set partition $(E_x)_{x\in X}$ of $E$ is a monomorphic
  decomposition of $R$ if and only if it is made of monomorphic parts of $R$.
\end{lemma}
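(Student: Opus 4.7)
My plan is to prove both implications of the biconditional separately, using Lemma~\ref{remark.definition.monomorphic_set} as the main tool. The forward direction will be almost immediate; the reverse direction will be obtained by a telescoping argument that transforms $A$ into $A'$ one block at a time.

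For the ``only if'' direction, fix a monomorphic decomposition $(E_x)_{x\in X}$ and a block $E_{x_0}$. I would verify condition (iii) of Lemma~\ref{remark.definition.monomorphic_set}: given finite $A,A'\subseteq E$ of the same cardinality with $A\setminus E_{x_0}=A'\setminus E_{x_0}$, for every $x\neq x_0$ we have $d_x(A)=d_x(A')$, and then $d_{x_0}(A)=d_{x_0}(A')$ follows from $|A|=|A'|$. Hence $X^A=X^{A'}$, so $R_{\restriction A}$ and $R_{\restriction A'}$ are isomorphic by the defining property of a monomorphic decomposition, proving that $E_{x_0}$ is a monomorphic part.

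For the ``if'' direction, suppose each $E_x$ is a monomorphic part and take finite subsets $A,A'$ of $E$ with $X^A=X^{A'}$. Let $x_1,\dots,x_n$ enumerate the (finitely many) blocks that meet $A\cup A'$, and define a chain of finite subsets $A=A_0,A_1,\dots,A_n=A'$ by
\begin{displaymath}
A_i := \bigcup_{j\leq i}(A'\cap E_{x_j}) \ \cup\ \bigcup_{j>i}(A\cap E_{x_j}).
\end{displaymath}
By construction $A_{i-1}$ and $A_i$ agree outside $E_{x_i}$, and since $|A\cap E_{x_i}|=d_{x_i}(A)=d_{x_i}(A')=|A'\cap E_{x_i}|$ they have the same cardinality. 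Applying condition (iii) of Lemma~\ref{remark.definition.monomorphic_set} to the monomorphic part $E_{x_i}$ yields $R_{\restriction A_{i-1}}\cong R_{\restriction A_i}$ for each $i$, and composing these isomorphisms gives $R_{\restriction A}\cong R_{\restriction A'}$, as required.

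The only subtlety to be careful with is verifying that the hypotheses of condition (iii) really hold for each consecutive pair $(A_{i-1},A_i)$; once one checks the bookkeeping that $A_{i-1}\setminus E_{x_i}=A_i\setminus E_{x_i}$ and that the common shape $X^A=X^{A'}$ forces equal cardinalities at every step, the rest is a routine chaining of isomorphisms. I do not foresee a genuine obstacle here — the lemma is really a straightforward consequence of the block-wise characterization of monomorphic parts provided by Lemma~\ref{remark.definition.monomorphic_set}.
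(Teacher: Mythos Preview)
Your proof is correct and follows essentially the same approach as the paper: the paper also uses a telescoping chain $A=A_0,\dots,A_k=A'$ that swaps one block at a time and invokes the monomorphic-part property for each step. The only cosmetic difference is that the paper dismisses the ``only if'' direction in one line (via condition~(i) of Lemma~\ref{remark.definition.monomorphic_set}, which is literally the definition), whereas you route it through condition~(iii); both are fine.
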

\begin{proof}
  The ``only if'' part is by definition. Consider now a set partition
  whose blocks are monomorphic parts of $R$, and take $A$ and $A'$ such that
  $X^A=X^{A'}$. Let  $E_1,\dots,E_k$ be the blocks that $A$ and $A'$
  intersect non trivially, and for $l\in 0,\dots,k$, set $A_0:= A'$,  $A_l: =
  (A\cap E_1) \cup \dots \cup (A\cap E_l) \cup(A'\cap E_{l+1})\cup\dots\cup
  (A'\cap E_k)$. Then,  for $l<k$, $A_l$ and $A_{l+1}$ have the same
  cardinality and $A_l\backslash E_{l+1}=A_{l+1}\backslash E_{l+1}$ are
  equal. Since $A_0=A'$ and $A_k=A$, one has $A\approx A'$, as desired.
\end{proof}

\begin{lemma} 
  \label{lemma.monomorphic_parts_are_good}
  Let $R$ be a relational structure on a set $E$. Then the set of 
  monomorphic parts of $R$ satisfies 
  Axioms~\ref{axioms.good}. If furthermore for every subset $D$ of $E$ the good sets of $D$ consist of the monomorphic parts of $R_{\restriction  D}$ then Axioms~\ref{axioms.goodd}, and~\ref{axioms.goode} are satisfied.
\end{lemma}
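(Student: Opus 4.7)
The plan is to verify the five axioms in turn, using characterization (iii) of Lemma~\ref{remark.definition.monomorphic_set} throughout. Axioms (a), (b), (d), and (e) are essentially formal consequences of the definition, while the substantial work lies in axiom (c).

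For (a), the partition of $E$ into singletons is trivially a monomorphic decomposition, so criterion (ii) of Lemma~\ref{remark.definition.monomorphic_set} makes each singleton a monomorphic part. For (b), if $F' \subseteq F$ and $F$ is monomorphic, then any $A, A' \subseteq E$ of the same finite cardinality with $A \setminus F' = A' \setminus F'$ automatically satisfy $A \setminus F = A' \setminus F$ (since $E \setminus F \subseteq E \setminus F'$), so criterion (iii) for $F$ yields $R_{\restriction A} \cong R_{\restriction A'}$. For (d), when $A, A' \subseteq D_1$ and $A \setminus (F \cap D_1) = A' \setminus (F \cap D_1)$, the inclusion $A, A' \subseteq D_1$ gives $A \setminus F = A' \setminus F$, so monomorphy of $F$ in $R_{\restriction D_2}$ applies directly. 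For (e), given a chain $\mathcal D$ with union $D$ and $F \subseteq D$ such that each $F \cap D'$ is $D'$-good, any finite pair $A, A' \subseteq D$ is contained in a single $D' \in \mathcal D$ by directedness and finiteness, and monomorphy of $F \cap D'$ in $R_{\restriction D'}$ (combined with the identity $A \setminus (F \cap D') = A \setminus F$ for $A \subseteq D'$) yields the conclusion.

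The heart of the proof is axiom (c). Let $\{F_i\}_{i \in I}$ be monomorphic parts with a common point $x_0 \in \bigcap_i F_i$, and set $F := \bigcup_i F_i$. Taking finite $A, A' \subseteq E$ with $|A| = |A'|$ and $A \setminus F = A' \setminus F$, writing $B := A \cap F$, $B' := A' \cap F$, $C := A \setminus F$, I must show $R_{\restriction B \cup C} \cong R_{\restriction B' \cup C}$. Since $B \cup B'$ is finite, a finite subfamily $F_{i_1}, \ldots, F_{i_n}$ covers it while still sharing $x_0$; induction on $n$ (writing $F_{i_1} \cup \cdots \cup F_{i_n} = (F_{i_1} \cup \cdots \cup F_{i_{n-1}}) \cup F_{i_n}$) reduces the problem to $F = F_1 \cup F_2$ with $x_0 \in F_1 \cap F_2$. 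I then transform $B$ into $B'$ by a sequence of basic swaps, each removing one element of the current set and adding another element of $F$ not in it, which suffices since $|B|=|B'|$. A swap $b \mapsto b'$ with both endpoints in $F_1$ (resp.\ $F_2$) is realized directly by monomorphy of $F_1$ (resp.\ $F_2$), using $C \cap F = \emptyset$. The delicate case is the mixed swap with $b \in F_1 \setminus F_2$ and $b' \in F_2 \setminus F_1$: I route through the pivot $x_0$. If $x_0 \notin B$, first swap $b$ for $x_0$ inside $F_1$, then $x_0$ for $b'$ inside $F_2$. If $x_0 \in B$, the reverse order is needed to keep each intermediate set of the correct cardinality: first swap $x_0$ for $b'$ inside $F_2$, then $b$ for $x_0$ inside $F_1$. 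In both cases, each intermediate step differs from the previous one only within $F_1$ or only within $F_2$, so monomorphy of the appropriate $F_j$ applies, and composition gives the desired isomorphism. The main obstacle is therefore this mixed swap, whose resolution hinges on choosing the order of the two moves according to whether the pivot $x_0$ already lies in $B$ or not.
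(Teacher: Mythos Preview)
Your proof is correct and follows essentially the same route as the paper. Both arguments handle axioms (a), (b), (d), (e) by direct verification, and for axiom (c) both reduce to single-element swaps and resolve the cross-block case $b\in F_i$, $b'\in F_{i'}$ by routing through the common point $x_0$, splitting into the two subcases according to whether $x_0$ already lies in the current set. The only cosmetic difference is that you insert an extra induction to reduce first to a finite subfamily and then to two sets $F_1\cup F_2$, whereas the paper works directly with whichever $F_i,F_{i'}$ contain the two swapped elements; the pivot argument itself is identical.
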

\begin{proof}
 We denote by $A$ and $A'$  two subsets of $E$ of the same finite
  cardinality.

  Axiom~\ref{axioms.good}(a): if $F$ is a singleton, and $A\setminus F=A'\setminus F$,
  then $A$ and $A'$ are equal, hence trivially isomorphic.

  Axiom~\ref{axioms.good}(b): let $F$ be a monomorphic part of $R$ and $F'\subseteq F$. If
  $A\setminus F'=A'\setminus F'$, then $A\setminus F=A'\setminus F$,
  and therefore $A$ and $A'$ are isomorphic.

  Axiom~\ref{axioms.good}(c): let $(F_i)_{i\in I}$ be a (possibly infinite) family of
 monomorphic parts which all share at least a common point $x$, and set
  $F=\bigcup_{i\in I} F_i$, and assume $A\setminus F=A'\setminus F$.

  Without loss of generality, we may assume that $A$ and $A'$ differ
  by a single point: $A=B\cup \{a\}$ and $A'=B\cup \{a'\}$ with $a$
  and $a'$ in $F$ (otherwise, pickup a sequence $A:=A_1,\dots,A_l=A'$,
  where at each step $A_j\setminus F = A\setminus F$, and $A_j$ and
  $A_{j+1}$ differ by a single point; then use transitivity).

  If $\{a,a'\}$ is a subset of some $F_i$, then it forms a monomorphic
 part. Using that $A\setminus \{a,a'\}=B=A'\setminus \{a,a'\}$, wet
  get  $A\approx A'$.

  Otherwise, take $F_i$ and $F_{i'}$ such that $a\in F_i$ and $a'\in
  F_i'$.

  If $x\not\in B$, then using successively that $\{a,x\}\subseteq F_i$
  is a monomorphic part  and $\{a',x\}\subseteq F'_i$ is a monomorphic
 part,  one gets,
  \begin{equation}
    A = B\cup \{a\} \approx B\cup\{x\} \approx B\setminus \cup \{a'\} = A'\,.
  \end{equation}
  Similarly, if $x\in B$,
  \begin{equation}
    A = B\cup \{a\} \approx B\setminus\{x\} \cup \{a,a'\} \approx
    B\setminus \cup \{a'\} = A'\,.
  \end{equation}

  Axiom~\ref{axioms.goodd}(d): Let $F$ be $D_2$-good, and assume that $A$ and $A'$ are
  subsets of $D_1$ such that $A\backslash (F\cap D_1)=A'\backslash
  (F\cap D_1)$. Then, $A\backslash F=A\backslash (F\cap
  D_1)=A'\backslash (F\cap D_1)=A'\backslash F$; hence $A\approx A'$.

  Axiom~\ref{axioms.goode}(e): assume $A$ and $A'$ are subsets of $D$ such that
  $A\backslash F=A\backslash F$. Take $D'$ large enough in $\mathcal
  D$ so that $D'$ contains both $A$ and $A'$. Then $A\backslash (F\cap
  D')=A\backslash F = A'\backslash F = A'\backslash (F\cap D')$, and
  therefore $A\approx A'$.
\end{proof}

We can now specialize
Proposition~\ref{proposition.good_set_partitions} to monomorphic
decompositions, to prove and refine Proposition~\ref{proposition.monodec}.
\begin{proposition}
  \label{proposition.lattice_monomorphic_decomposition} %
  The maximal monomorphic parts of $R$ form a monomorphic
  decomposition of $R$. Furthermore, the other monomorphic
  decompositions are exactly the finer set partitions of this
  partition.
\end{proposition}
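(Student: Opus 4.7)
The plan is to assemble the result from the machinery already established. The key observation is that Lemma~\ref{lemma.monomorphic_parts_are_good} shows the collection $\goodsubset$ of monomorphic parts of $R$ satisfies Goodness Axioms~\ref{axioms.good}(a)--(c), so the general lattice theory of Subsection~\ref{section.good_set_partitions} applies verbatim with ``good'' meaning ``monomorphic part''.

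First I would invoke Lemma~\ref{lem:trivia}: since $\goodsubset$ satisfies the goodness axioms, its maximal members $(\goodsubset(a))_{a\in E}$ form a set partition $\minmorphdec(\goodsubset)$ of $E$. (Here $\goodsubset(a)$ is the union of all monomorphic parts of $R$ containing $a$, which is itself a monomorphic part by axiom~(c) applied with common point $a$, hence the unique maximal monomorphic part through $a$.) Since each block of $\minmorphdec(\goodsubset)$ is a monomorphic part, Lemma~\ref{lemma.set_partition_monomorphic_blocks} gives that $\minmorphdec(\goodsubset)$ is itself a monomorphic decomposition of $R$, proving the first assertion.

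For the second assertion, I would combine Proposition~\ref{proposition.good_set_partitions} with Lemma~\ref{lemma.set_partition_monomorphic_blocks}. On one hand, Proposition~\ref{proposition.good_set_partitions} states that a set partition is good (each block a monomorphic part) if and only if it refines $\minmorphdec(\goodsubset)$. On the other hand, Lemma~\ref{lemma.set_partition_monomorphic_blocks} states that a set partition is a monomorphic decomposition of $R$ if and only if each of its blocks is a monomorphic part, i.e.\ if and only if it is good. Putting these together, the monomorphic decompositions of $R$ are exactly the refinements of $\minmorphdec(\goodsubset)$.

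There is no real obstacle to surmount here: the proposition is essentially a direct specialization of the lattice-theoretic framework, and the only nontrivial input — that monomorphic parts satisfy the goodness axioms — was already dispatched in Lemma~\ref{lemma.monomorphic_parts_are_good}. The only point requiring a moment's care is the verification that $\goodsubset(a)$ is itself a monomorphic part (rather than merely a union of them), but this is precisely what axiom (c) guarantees since every monomorphic part in the union contains $a$.
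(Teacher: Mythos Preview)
Your proposal is correct and follows exactly the approach the paper intends: the paper gives no explicit proof of this proposition but simply states that it is obtained by specializing Proposition~\ref{proposition.good_set_partitions} to monomorphic decompositions, using Lemma~\ref{lemma.monomorphic_parts_are_good} as the bridge. You have unpacked precisely this specialization, correctly invoking Lemma~\ref{lem:trivia} and Lemma~\ref{lemma.set_partition_monomorphic_blocks} along the way.
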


The \emph{monomorphic components} of $R$, or \emph{components} for
short, are the maximal mono\-morphic parts of $R$. We denote by $c(R)$
the number of components of $R$. The partition of $E$ into components,
that we denote by $\mathcal P(R)$, is the \emph{minimal monomorphic
  decomposition of} $R$.

The main consequence of Proposition~\ref{proposition.lattice_monomorphic_decomposition} is the following:
\begin{corollary} \label{cor:shape} Let $R$ and $R'$ be two relational
  structures with domains $E$ and $E'$ respectively. If $R$ and $R'$
  are isomorphic, then the components of $R$ are mapped bijectively
  onto the components of $R'$ by any isomorphism $\sigma$. In
  particular, every automorphism of $R$ induces a permutation of the
  components of $R$ and, if the domain $E$ of $R$ is finite, $E$ and
  $E'$ have the same number of components and the same shape
  w.r.t. their minimal decompositions.
 \end{corollary}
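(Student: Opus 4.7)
The plan is to reduce everything to the observation that being a monomorphic part is an intrinsic, isomorphism-invariant property, so that an isomorphism $\sigma : R \to R'$ induces a bijection between monomorphic parts that preserves inclusion. Once this is in place, the three assertions of the corollary follow almost formally from Proposition~\ref{proposition.lattice_monomorphic_decomposition}.

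First I would verify: if $F \subseteq E$ is a monomorphic part of $R$, then $\sigma(F) \subseteq E'$ is a monomorphic part of $R'$. Using criterion~(iii) of Lemma~\ref{remark.definition.monomorphic_set}, take any two finite subsets $B, B' \subseteq E'$ of the same cardinality with $B \setminus \sigma(F) = B' \setminus \sigma(F)$. Pulling back through $\sigma^{-1}$, the sets $A := \sigma^{-1}(B)$ and $A' := \sigma^{-1}(B')$ have the same cardinality and satisfy $A \setminus F = A' \setminus F$, so $R_{\restriction A} \simeq R_{\restriction A'}$; conjugating by $\sigma$ yields $R'_{\restriction B} \simeq R'_{\restriction B'}$, so $\sigma(F)$ is a monomorphic part of $R'$. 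Applying the same argument to $\sigma^{-1}$, we get that $F \mapsto \sigma(F)$ is a bijection between the monomorphic parts of $R$ and those of $R'$.

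Next, since $\sigma$ is a bijection of base sets, this correspondence preserves set-theoretic inclusion, hence maximality. By Proposition~\ref{proposition.lattice_monomorphic_decomposition} the components of $R$ and $R'$ are precisely the maximal monomorphic parts, so $\sigma$ maps the components of $R$ bijectively onto those of $R'$. Specialising to the case $R' = R$, every automorphism of $R$ permutes the components of $R$.

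Finally, assume $E$ is finite. Since $\sigma$ is a bijection of sets, $|E'| = |E|$; the induced bijection on components gives $c(R) = c(R')$, and each component $E_x$ of $R$ is carried to the component $\sigma(E_x)$ of $R'$ of the same cardinality. Hence the multisets of component sizes, that is the shapes with respect to the minimal monomorphic decompositions, coincide. I do not foresee a genuine obstacle here: the only point that deserves care is the invariance check in the first step, and that is immediate once condition~(iii) of Lemma~\ref{remark.definition.monomorphic_set} is used (rather than trying to transport the set-partition data directly).
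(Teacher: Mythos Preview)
Your proof is correct and follows exactly the line the paper intends: the corollary is stated in the paper as an immediate consequence of Proposition~\ref{proposition.lattice_monomorphic_decomposition} with no further proof, and your argument spells out precisely that consequence by transporting monomorphic parts through $\sigma$ via criterion~(iii) of Lemma~\ref{remark.definition.monomorphic_set} and then using that components are the maximal monomorphic parts.
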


Propositions~\ref{proposition.good}
and~\ref{proposition.chain_good_set_partitions} also apply, and will
be used in the sequel. In particular, we get the following:
\begin{proposition}
  \label{proposition.restriction_monomorphic_decomposition}
  Let $R$ be a relational structure and $R'$ be a restriction of
  it. Then any monomorphic decomposition of $R$ induces a monomorphic
  decomposition of $R'$. Note however that this monomorphic
  decomposition may have fewer components, and that minimality is not
  necessarily preserved.
\end{proposition}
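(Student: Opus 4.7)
The proposition follows almost immediately from machinery already in hand, so the plan is to chain together the goodness axioms proved earlier rather than redo any combinatorics from scratch. Let $R$ be a relational structure on $E$, let $R'$ be its restriction to some $E' \subseteq E$, and let $\mathcal{P} := (E_x)_{x\in X}$ be a monomorphic decomposition of $R$. My proposal is to take as induced partition of $E'$ the family $\mathcal{P}' := (E_x \cap E')_{x \in X'}$, where $X' := \{x \in X \suchthat E_x \cap E' \ne \emptyset\}$, and show that each block is a monomorphic part of $R'$.

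The first step is to observe that by Lemma~\ref{lemma.set_partition_monomorphic_blocks}, every block $E_x$ of $\mathcal{P}$ is a monomorphic part of $R$. Next, Lemma~\ref{lemma.monomorphic_parts_are_good} establishes that the assignment $D \mapsto \{\text{monomorphic parts of } R_{\restriction D}\}$ satisfies Axiom~\ref{axioms.goodd}(d). Applying (d) with $D_2 := E$, $D_1 := E'$, and $F := E_x$ yields that $E_x \cap E'$ is a monomorphic part of $R'$ for every $x \in X'$. The family $\mathcal{P}'$ is patently a set partition of $E'$ (we have just dropped empty blocks from the intersection of $\mathcal{P}$ with $E'$), and it is made of monomorphic parts of $R'$, so the converse direction of Lemma~\ref{lemma.set_partition_monomorphic_blocks} shows that $\mathcal{P}'$ is a monomorphic decomposition of $R'$.

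For the two caveats appended to the statement, no further argument is really needed beyond a short remark: the number of blocks may drop because some $E_x$ may miss $E'$ entirely, so $|X'| < |X|$ is possible; and minimality may fail because two components of $R$ can be merged into a larger monomorphic part of $R'$ once the external witnesses distinguishing them are discarded (a concrete instance is obtained by letting $E'$ be a finite set, where the minimal monomorphic decomposition of $R'$ is always trivially $\{E'\}$ if $R'$ is monomorphic, while $\mathcal{P}'$ can be strictly finer). The only conceptual step in the whole argument is recognizing that Axiom (d), already packaged in Lemma~\ref{lemma.monomorphic_parts_are_good}, is exactly the stability-under-restriction statement we need; there is no real obstacle past this identification.
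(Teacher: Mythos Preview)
Your proof is correct and follows essentially the same route as the paper: the paper derives this proposition directly from Proposition~\ref{proposition.good}(a), which in turn rests on Axiom~\ref{axioms.goodd}(d) for monomorphic parts established in Lemma~\ref{lemma.monomorphic_parts_are_good}. You unpack that chain explicitly (via Lemma~\ref{lemma.set_partition_monomorphic_blocks} and Axiom~(d)) rather than citing Proposition~\ref{proposition.good}(a) as a black box, but the underlying argument is identical.
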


Let $(E_x)_{x\in X}$ be a partition of a set $E$. Given $d\in \N$,
call \emph{$d$-fat} a subset $A$ of $E$ such that, for all $x\in X$,
$d_x(A) \geq d$ whenever $A\not\supseteq E_x$. We prove that, if the minimal
monomorphic decomposition has finitely many components, then the
isomorphism relation is shape preserving on fat enough sets, and we
derive a lower bound on the profile.

\begin{lemma}
  \label{lemma.minimalGrowthRate}
  Let $R$ be an infinite relational structure on a set $E$ admitting a finite  monomorphic decomposition. Then, there exists some
  integer $d$ such that on every $d$-fat subset $A$ of $E$ (w.r.t.   $\mathcal P(R)$)  the partition $\mathcal P(R)_{\restriction A}$  induced by $\mathcal P(R)$ on $A$ coincides  with $\mathcal P(R_{\restriction A})$. In particular the shape of $A$ w.r.t. $\mathcal{P}(R)$ coincides 
  with the shape of $A$ w.r.t. $\mathcal {P}(R_{\restriction A})$. 
   \end{lemma}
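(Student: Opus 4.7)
The plan is to reformulate equality of the two partitions as a non-mergeability statement for pairs of blocks, and then to transplant, inside $A$, finitely many fixed witnesses of non-mergeability living in $E$. Write $\mathcal P(R)=(E_x)_{x\in X}$; by Proposition~\ref{proposition.lattice_monomorphic_decomposition}, $X$ is finite because $R$ admits a finite monomorphic decomposition. Proposition~\ref{proposition.restriction_monomorphic_decomposition} already gives that $\mathcal P(R)_{\restriction A}$ is a monomorphic decomposition of $R_{\restriction A}$, so $\mathcal P(R_{\restriction A})\coarser\mathcal P(R)_{\restriction A}$. Equality therefore boils down to the claim that, for every $x\neq y$ in $X$ with $E_x\cap A,E_y\cap A$ nonempty, the set $(E_x\cap A)\cup(E_y\cap A)$ is \emph{not} a monomorphic part of $R_{\restriction A}$; otherwise the blocks $E_x\cap A$ and $E_y\cap A$ would sit in a common maximal monomorphic part of $R_{\restriction A}$. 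To obtain such witnesses, first fix, for every pair $x\neq y$ in $X$, finite subsets $B_1^{xy},B_2^{xy}\subseteq E$ of equal cardinality, coinciding outside $E_x\cup E_y$ and satisfying $R_{\restriction B_1^{xy}}\not\cong R_{\restriction B_2^{xy}}$; they exist by Lemma~\ref{remark.definition.monomorphic_set} applied to $E_x\cup E_y$, which is not a monomorphic part of $R$ because $\mathcal P(R)$ is coarsest. Since $X$ is finite, the integer
\[
  d:=\max_{x\neq y,\;z\in X,\;i\in\{1,2\}}\bigl|B_i^{xy}\cap E_z\bigr|
\]
is finite, and (because at least one $B_i^{xy}$ is nonempty) satisfies $d\geq 1$, which already guarantees $E_x\cap A\neq\emptyset$ for all $x$ whenever $A$ is $d$-fat.

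Next, given a $d$-fat subset $A\subseteq E$ and a pair $x\neq y$, I would construct $A_1^{xy},A_2^{xy}\subseteq A$ block by block: for each $z\in X$, pick $C_z^1,C_z^2\subseteq A\cap E_z$ with $|C_z^i|=|B_i^{xy}\cap E_z|$, imposing $C_z^1=C_z^2$ when $z\notin\{x,y\}$, and set $A_i^{xy}:=\bigcup_{z\in X}C_z^i$. Such choices exist: when $E_z\not\subseteq A$, the $d$-fatness of $A$ gives $|A\cap E_z|\geq d\geq|B_i^{xy}\cap E_z|$; when $E_z\subseteq A$, one has trivially $|A\cap E_z|=|E_z|\geq|B_i^{xy}\cap E_z|$. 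By construction $A_1^{xy}$ and $A_2^{xy}$ lie in $A$, have equal cardinality, and agree outside $(E_x\cup E_y)\cap A$.

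It remains to propagate non-isomorphy from $(B_1^{xy},B_2^{xy})$ to $(A_1^{xy},A_2^{xy})$, for which I would walk from $B_i^{xy}$ to $A_i^{xy}$ one block at a time. Enumerate $X=\{z_1,\dots,z_k\}$ and form a chain $B_i^{xy}=S_0,S_1,\dots,S_k=A_i^{xy}$, where $S_j$ is obtained from $S_{j-1}$ by replacing $S_{j-1}\cap E_{z_j}$ with $C_{z_j}^i$. Consecutive sets have equal cardinality and coincide outside $E_{z_j}$, so the monomorphy of $E_{z_j}$ in $R$ (Lemma~\ref{remark.definition.monomorphic_set}(iii)) gives $S_{j-1}\approx S_j$; iterating yields $B_i^{xy}\approx A_i^{xy}$, whence $A_1^{xy}\not\approx A_2^{xy}$ in $R$, and \emph{a fortiori} in $R_{\restriction A}$ since $A_i^{xy}\subseteq A$. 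This is the required failure of monomorphy, so $\mathcal P(R)_{\restriction A}=\mathcal P(R_{\restriction A})$, and the shape equality follows at once. The only real obstacle is the quantifier orchestration: the threshold $d$ must be pinned down after the finitely many witnesses $B_i^{xy}$ but uniformly in $A$, which is possible precisely because $X$ is finite.
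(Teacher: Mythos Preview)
Your argument is correct and complete. The reduction to pairwise non-mergeability is sound (since $\mathcal P(R)_{\restriction A}$ refines $\mathcal P(R_{\restriction A})$, any failure of equality would put two full blocks $E_x\cap A$, $E_y\cap A$ inside a common maximal monomorphic part), the transplantation of the witnesses $B_i^{xy}$ into $A$ is exactly what the definition of a monomorphic decomposition allows (indeed $X^{B_i^{xy}}=X^{A_i^{xy}}$ suffices directly, without the block-by-block walk), and the resulting pair $A_1^{xy}\not\approx A_2^{xy}$ with $A_1^{xy}\setminus((E_x\cup E_y)\cap A)=A_2^{xy}\setminus((E_x\cup E_y)\cap A)$ certifies via Lemma~\ref{remark.definition.monomorphic_set}(iii) that $(E_x\cup E_y)\cap A$ is not a monomorphic part of $R_{\restriction A}$.

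Your route is genuinely different from the paper's. The paper works abstractly through the ``goodness'' machinery of Section~\ref{section.good_set_partitions}: it invokes Proposition~\ref{proposition.chain_good_set_partitions} (itself resting on the transfinite Lemma~\ref{lemma.cardinal}) to extract a single finite $F\subseteq E$ whose minimal decomposition is properly induced by $\mathcal P(R)$, then argues via Corollary~\ref{cor:shape} that any $F'$ with $d(F')=d(F)$ has the same property, and sets $d:=\max_x|F\cap E_x|$. Your approach bypasses that entire framework: you produce explicit finite witnesses $B_i^{xy}$ for each of the $\binom{|X|}{2}$ pairs and take $d$ large enough to accommodate all of them simultaneously. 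What you gain is a more elementary and self-contained proof that does not rely on Axioms~\ref{axioms.goodd}--\ref{axioms.goode} or on Proposition~\ref{proposition.chain_good_set_partitions}; what the paper's approach buys is that it exercises the general lattice-of-good-partitions formalism, which is reused elsewhere and which handles the passage from finite witnesses to arbitrary subsets once and for all.
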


\begin{proof}
Let $E$ be the base set of $R$ and  $(E_x)_{x\in X}$ be the minimal partition of $E$ into monomorphic parts. 
According to Proposition~\ref{proposition.chain_good_set_partitions} there is a family $\mathcal F$ of finite subsets $F$ of $E$ such that for every subset $D$ of $E$ the minimal partition of $D$ into monomorphic parts of $R_{\restriction D}$ is properly induced by the partition of $E$ if and only if $D$ contains some member of $\mathcal F$. Pick $F\in \mathcal F$. We claim that  every subset  $F'$ of $E$ such that  $d(F')=d(F)$ has the same property as $F$. Indeed, according to  (c) of Proposition~\ref{proposition.good},  it suffices to prove that 
$c(R_{\restriction F'})=c(R)$.  This fact is straightforward, indeed, since $d(F')=d(F)$, $R_{\restriction F}$ is isomorphic to $R_{\restriction F'}$,  hence from Corollary~\ref{cor:shape}  the  components   of $R_{\restriction F}$ correspond bijectively to the  components of  $R_{\restriction F'}$. Hence, $c(R_{\restriction F})=c(R_{\restriction F'})$.  By our choice of $F$, $c(R_{\restriction F})=c(R)$. Since the components of $R$ induce a monomorphic decomposition of $R_{\restriction F'}$, $c(R_{\restriction F'})\leq c(R)$. This yields $c(R_{\restriction F'})=c(R)$, as required. Let $d:= \max \{\vert F\cap E_x\vert \suchthat x\in X\}$. Let $A$ be a $d$-fat subset of $E$ w.r.t. the partition $(E_x)_{x\in X}$. Then $A$ contains a subset $F'$ such that $d(F')=d(F)$ and we are done.
\end{proof}

With the notations of Lemma~\ref{lemma.minimalGrowthRate}
we have \begin{corollary} Two isomorphic $d$-fat subsets of $E$ always have the
  same shape w.r.t. $\minmorphdec(R)$.
\end{corollary}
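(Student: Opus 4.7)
The plan is to chain Lemma~\ref{lemma.minimalGrowthRate} with Corollary~\ref{cor:shape}. Given two $d$-fat subsets $A$ and $A'$ of $E$ with $R_{\restriction A}$ isomorphic to $R_{\restriction A'}$, the lemma gives us immediately that
\[
\mathcal P(R_{\restriction A}) = \mathcal P(R)_{\restriction A}
\qquad\text{and}\qquad
\mathcal P(R_{\restriction A'}) = \mathcal P(R)_{\restriction A'}.
\]
Hence the shape of $A$ (resp.\ $A'$) with respect to the global minimal decomposition $\minmorphdec(R)$ is identical to the shape of $A$ (resp.\ $A'$) with respect to the intrinsic partition into monomorphic components, $\mathcal P(R_{\restriction A})$ (resp.\ $\mathcal P(R_{\restriction A'})$).

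Next I would invoke Corollary~\ref{cor:shape}: any isomorphism $\sigma \colon R_{\restriction A} \to R_{\restriction A'}$ maps the components of $R_{\restriction A}$ bijectively onto those of $R_{\restriction A'}$. Since $\sigma$ is itself a bijection from $A$ onto $A'$, its restriction to each component is a bijection onto the corresponding component of $A'$, so it preserves the size of each block. Consequently, the multiset of block sizes of $\mathcal P(R_{\restriction A})$ equals the multiset of block sizes of $\mathcal P(R_{\restriction A'})$; equivalently, $A$ and $A'$ have the same shape with respect to their intrinsic minimal monomorphic decompositions.

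Combining the two paragraphs above, $\overline d(A) = \overline d(A')$ with respect to $\minmorphdec(R)$, which is the conclusion. There is no real obstacle: once Lemma~\ref{lemma.minimalGrowthRate} is available, the corollary is essentially a bookkeeping step. The only point that deserves a word of care is that Corollary~\ref{cor:shape} explicitly yields equality of shapes only for finite domains, but here we use instead its first, unrestricted assertion about bijectivity between components, together with the trivial fact that a bijection preserves cardinalities of blocks, so the argument applies regardless of whether $A$ and $A'$ are finite.
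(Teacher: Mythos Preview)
Your proof is correct and follows exactly the approach the paper intends: the corollary is stated without proof, immediately after Lemma~\ref{lemma.minimalGrowthRate} and with the phrase ``With the notations of Lemma~\ref{lemma.minimalGrowthRate} we have'', signalling that it is a direct combination of that lemma with Corollary~\ref{cor:shape}. Your careful remark about using the bijectivity-of-components clause of Corollary~\ref{cor:shape} rather than its finite-domain conclusion is a nice touch, though in the intended applications the subsets are finite anyway.
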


\begin{theorem}
  \label{theorem.minimalGrowthRate} %
  Let $R$ be an infinite relational structure with a finite
  monomorphic decomposition, and let $k$ be its monomorphic dimension.
  Then, the profile $\profile_R$ of $R$ is bounded from below by a
  polynomial of degree $k-1$; namely, there exists $n_0$ such that for
  $n\geq n_0$, $\profile_R(n)\geq \wp_k(n-n_0)$, where $\wp_k(m)$ is the
  number of integer partitions of $m$ in at most $k$ parts.
\end{theorem}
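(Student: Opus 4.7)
My plan is to use Lemma~\ref{lemma.minimalGrowthRate} to convert the counting of isomorphism types into the counting of shapes, and then exhibit one $d$-fat subset per integer partition of $n-n_0$ into at most $k$ parts.

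Let $(E_x)_{x\in X}$ be the minimal monomorphic decomposition $\mathcal P(R)$, with $X_\infty=\{1,\dots,k\}$ indexing the infinite blocks $E_1,\dots,E_k$. Since $X$ is finite and the non-infinite blocks are finite, the union $F_{\mathrm{fin}}:=\bigcup_{x\in X\setminus X_\infty}E_x$ is a finite set; set $c:=|F_{\mathrm{fin}}|$. Let $d$ be the integer furnished by Lemma~\ref{lemma.minimalGrowthRate}, and define $n_0:=c+kd$.

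For $n\geq n_0$, write $m:=n-n_0$. To each partition $\lambda=(\lambda_1\geq\lambda_2\geq\cdots\geq\lambda_k\geq 0)$ of $m$, I associate a subset $A_\lambda\subseteq E$ as follows: take $A_\lambda:=F_{\mathrm{fin}}\cup B_1\cup\cdots\cup B_k$, where $B_i\subseteq E_i$ is any subset of cardinality $d+\lambda_i$ (possible since $E_i$ is infinite). Then $|A_\lambda|=c+kd+m=n$, and $A_\lambda$ is $d$-fat with respect to $\mathcal P(R)$: for each finite block $E_x$ we have $A_\lambda\supseteq E_x$, and for each infinite block $E_i$ we have $d_i(A_\lambda)=d+\lambda_i\geq d$.

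Now the shape of $A_\lambda$ (with respect to $\mathcal P(R)$) is, up to reordering the finitely many fixed contributions $|E_x|$ from the finite blocks, the decreasing sequence $(d+\lambda_1,d+\lambda_2,\dots,d+\lambda_k)$ on the infinite blocks. Therefore, for two distinct partitions $\lambda\ne\mu$ of $m$ into at most $k$ parts, the multisets $\{d+\lambda_i\}_{i=1}^k$ and $\{d+\mu_i\}_{i=1}^k$ differ (once we subtract the common contribution of the finite blocks and $d$ from each of the $k$ largest entries, we recover $\lambda$ and $\mu$). By the Corollary immediately following Lemma~\ref{lemma.minimalGrowthRate}, two isomorphic $d$-fat subsets share the same shape, so $A_\lambda$ and $A_\mu$ have different isomorphism types. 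Consequently $\profile_R(n)\geq \wp_k(m)=\wp_k(n-n_0)$, as desired.

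The only potential snag is bookkeeping: one must be slightly careful to separate the (constant) contributions of the finite blocks from the variable contributions of the infinite blocks so as to read off the underlying partition $\lambda$ from the shape. Once $d$ is chosen large enough (via Lemma~\ref{lemma.minimalGrowthRate}) and $n_0=c+kd$ absorbs these constants, the construction above is unambiguous, and no real obstacle remains.
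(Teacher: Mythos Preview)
Your argument is correct and follows essentially the same route as the paper: pick $d$ from Lemma~\ref{lemma.minimalGrowthRate}, build one $d$-fat $n$-subset per partition of $n-n_0$ into at most $k$ parts, and use the Corollary that $d$-fat isomorphic subsets share the same shape to conclude these types are pairwise distinct. The paper's bookkeeping differs only cosmetically (it splits the finite blocks into those of size $\geq d$ and $<d$, giving $n_0=k_1d+m$ instead of your $n_0=c+kd$), and your justification that distinct $\lambda$ yield distinct shapes is best phrased as a multiset subtraction (remove the fixed contributions $\{|E_x|:x\notin X_\infty\}$ from the shape to recover $\{d+\lambda_i\}$) rather than via ``the $k$ largest entries'', since a finite block may well have size exceeding $d+\lambda_i$; but the conclusion stands either way.
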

\begin{proof}
  Take $d$ as in Lemma~\ref{lemma.minimalGrowthRate}, and let
  $n_0:=k_1d+m$, where $k_1$ is the number of monomorphic components
  of $R$ having at least $d$ elements, and $m$ is the sum of the
  cardinalities of the other finite monomorphic components of $R$.

  Let $E_1, \dots, E_p$ be the monomorphic
  components of $R$, enumerated in such a way that $\vert E_1\vert
  \geq \cdots \geq \vert E_p\vert$. To each decreasing sequence $x:=
  x_1\geq \cdots \geq x_{k'}$ of positive integers such that $k'\leq
  k$ and $x_1+\cdots+ x_{k'}= n-n_0$ associate an $n$-element subset
  $A(x)$ of $E$ such that $\vert A(x)\cap E_i\vert$ is respectively
  $d+x_i$ if $i\leq k'$, $d$ if $k'<i\leq k_1$ and $\vert E_i\vert $
  if $k_1<i\leq p$. The set $A(x)$ is $d$-fat and its shape is $(\vert
  A(x)\cap E_1\vert, \dots, \vert A(x)\cap E_p\vert)$. Clearly, if $x$
  and $x'$ are two distinct sequences as above, the shapes of $A(x)$
  and $A(x')$ are distincts. Since these sets are $d$-fats, the
  restrictions $R_{\restriction A(x)}$ and $R_{\restriction A(x')}$
  are not isomorphic. The claimed inequality follows.

  As it is well known, $\wp_k(m)$ is asymptotically equivalent to
  $\frac{m^{k-1}}{(k-1)!k!}$ (see
  e.g.~\cite{VanLint_Wilson.ACourseInCombinatorics.1992}). Thus
  $\profile_R(n)$ is asymptotically bounded below by
  $\frac{n^{k-1}}{(k-1)!k!}$.
\end{proof}

\subsection{Monomorphic decompositions, chainability and a proof of Theorem~\ref{thm:charcfinitemonomorph}}
\label{thm:charcfinitemonomorph.proof}

In Theorem~\ref{monomorphiccomp} below, we present the relationship between the notions of chainability, monomorphy and  our notion of monomorphic parts. The key ingredients of the proof are Ramsey's theorem, Compactness theorem of first order logic and some properties of the kernel of relational structures. We give  below the facts we need.

Let $R:= (E, (\rho_i)_{i \in I})$ a relational structure. For each subset $I'$ of $I$ we set $R^{I'}:= (E, (\rho_i)_{i \in I'})$. 

Compactness theorem of first order logic yields the following lemma: 

\begin{lemma}\label{lemma:compactness}A relational structure $R:= (E, (\rho_i)_{i \in I})$ is $F$-chainable if and only if for each finite subset $F'$ of $F$ and every finite subset $I'$ of $I$, $R^{I'}_{\restriction (E\setminus F)\cup F'}$ is $F'$-chainable. 
\end{lemma}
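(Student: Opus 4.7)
The forward direction is routine: if $R$ is $F$-chainable via a linear order $\leq$ on $B := E \setminus F$, then the same $\leq$ witnesses the $F'$-chainability of $R^{I'}_{\restriction (E\setminus F) \cup F'}$ for every finite $F' \subseteq F$ and $I' \subseteq I$. Indeed, any local isomorphism $\sigma$ of $(B, \leq)$ extended by the identity on $F'$ is the restriction of the same $\sigma$ extended by the identity on the larger set $F$, which by hypothesis is a local isomorphism of $R$, and therefore also of $R^{I'}$ restricted to any subdomain containing $\dom(\sigma) \cup F'$.

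For the converse, the plan is to apply the compactness theorem of first-order logic to a theory $T$ whose models correspond to linear orders $\leq$ on $B$ making $R$ $F$-chainable. Let $L$ be the language obtained from the signature of $R$ by adjoining a constant $c_a$ for each $a \in E$ and a new binary relation symbol $\leq$. The theory $T$ consists of: (i) the atomic diagram of $R$ in the constants $c_a$; (ii) axioms asserting that $\leq$ is a linear order on $\{c_a \suchthat a \in B\}$; (iii) for each pair of finite tuples $\bar{a}, \bar{b}$ from $B$ of equal length, each finite tuple $\bar{f}$ from $F$, each $i \in I$, and each way of interleaving $\bar{a}$ (resp.\ $\bar{b}$) with $\bar{f}$ into an $m_i$-tuple, the quantifier-free sentence saying that if $\bar{a}$ and $\bar{b}$ have the same order pattern under $\leq$, then $\rho_i$ holds on the two resulting tuples simultaneously. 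A model of $T$ is precisely $R$ equipped with a linear order on $B$ such that every local isomorphism of $(B, \leq)$ extended by the identity on $F$ preserves every $\rho_i$, which is the definition of $F$-chainability.

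By compactness it suffices to exhibit a model of each finite $T_0 \subseteq T$. Such a $T_0$ mentions only a finite subset $B_0 \subseteq B$, a finite $F' \subseteq F$, and a finite $I' \subseteq I$. By hypothesis, there exists a linear order $\leq_0$ on $B$ witnessing the $F'$-chainability of $R^{I'}_{\restriction B \cup F'}$. Taking $\leq_0$ to interpret $\leq$, the $c_a$ to denote the corresponding elements of $E$, and the $\rho_i$ to be interpreted as in $R$, one obtains a model of $T_0$: the diagram clauses hold because the interpretation is literally $R$; the linear order clauses hold since $\leq_0$ is a linear order; and the invariance clauses in $T_0$ hold because they involve only tuples from $B_0 \cup F'$ and indices from $I'$, on which $\leq_0$ enforces exactly the required invariance.

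The one delicate point is writing (iii) correctly as a first-order schema: since each $m_i$ is finite and every axiom involves only fixed tuples and a bounded number of order comparisons among the $c_a$, each sentence is genuinely quantifier-free and belongs to $L$. Apart from this careful bookkeeping, I do not foresee any obstacle; in particular, no Ramsey-type argument is needed here, the full force of the result being carried by compactness alone.
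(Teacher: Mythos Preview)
Your proof is correct and follows exactly the approach the paper indicates: the paper states only that ``Compactness theorem of first order logic yields the following lemma'' without further detail, and you have carried out that compactness argument in full. The axiomatization in (i)--(iii) is the right one, and the verification that each finite subtheory $T_0$ is satisfiable using the hypothesized $F'$-chainability of $R^{I'}_{\restriction B\cup F'}$ is sound; the only cosmetic point is that the linear-order axioms in (ii) and the ``same order pattern'' hypothesis in (iii) should be read as schemata of quantifier-free sentences in the constants $c_a$, which you clearly intend.
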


Ramsey's theorem  yields the following result:
\begin{lemma}[Fra\"{\i}ss\'e~\cite{Fraisse.1954}]
  \label{chainablerestriction}
  Let $R$ be a relational structure with domain $E$, and $F$ be a
  finite subset of $E$. If the signature of $R$ is finite then there
  is an infinite subset $E'$ of $E$ containing $F$ on which the
  restriction $R':= R_{\restriction E'}$ is $F$-chainable.
\end{lemma}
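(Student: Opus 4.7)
The plan is to apply the infinite Ramsey theorem iteratively, once for each arity appearing in $R$. Since $E'$ is required to be infinite, I may assume $E$ itself is infinite; I fix an arbitrary well-ordering $\leq$ on $B := E \setminus F$, which is infinite because $F$ is finite. As $I$ is finite, the set of arities admits a finite maximum $M := \max_{i\in I} m_i$.

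For each $m \in \{1, \ldots, M\}$ I would define a coloring $c_m$ of $[B]^m$ by attaching to $A = \{a_1 < \cdots < a_m\}$ the datum which records, for every $i \in I$ and every formal $m_i$-tuple $\tau$ with entries drawn from the alphabet $\{1,\ldots,m\} \cup F$, whether the actual tuple $\tau(A) \in (A\cup F)^{m_i}$ obtained by replacing each symbol $j \in \{1,\ldots,m\}$ by $a_j$ and each $f \in F$ by itself belongs to $\rho_i$. Since $m$, $|F|$, $|I|$, and the individual arities are all finite, $c_m$ uses finitely many colors. I would then apply the infinite Ramsey theorem iteratively: take $E_1 \subseteq B$ infinite and monochromatic for $c_1$, then $E_2 \subseteq E_1$ infinite and monochromatic for $c_2$, and so on, ending after $M$ steps with an infinite set $E^* := E_M$ on which, for every $m \leq M$, all increasing $m$-tuples share the same $c_m$-color.

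Set $E' := E^* \cup F$, endow $E^*$ with the restriction of $\leq$, and let me verify that $R_{\restriction E'}$ is $F$-chainable. Pick any order-preserving partial bijection $\sigma$ between subsets of $E^*$ and extend it by the identity on $F$; I must check that for every $i \in I$ and every tuple $t = (t_1, \ldots, t_{m_i}) \in (\dom(\sigma) \cup F)^{m_i}$, one has $t \in \rho_i$ if and only if $\sigma(t) \in \rho_i$. Let $A_0$ be the set of distinct entries of $t$ lying in $\dom(\sigma)$, enumerated in order as $A_0 = \{b_1 < \cdots < b_k\}$, and set $A_0' := \sigma(A_0) = \{b_1' < \cdots < b_k'\}$; since $\sigma$ is order-preserving, $\sigma(b_l) = b_l'$ for all $l$, and $k \leq m_i \leq M$. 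Then $t = \tau(A_0)$ for a unique formal $m_i$-tuple $\tau$ with entries in $\{1,\ldots,k\} \cup F$, and $\sigma(t) = \tau(A_0')$. By monochromaticity of $c_k$ on $[E^*]^k$, the sets $A_0$ and $A_0'$ carry the same color, hence $t \in \rho_i$ if and only if $\sigma(t) \in \rho_i$, which is exactly the condition for $\sigma$ extended by the identity on $F$ to be a local isomorphism of $R_{\restriction E'}$.

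The main bookkeeping hurdle is just to set up the coloring carefully so that (i) the number of colors is genuinely finite, (ii) the color is sensitive to the order on $A$, and (iii) monochromaticity transfers to equality of all the relational truth values, including those for tuples with repeated entries or entries in $F$; once these points are nailed down, the iteration over arities and the verification of chainability are routine. Note that no appeal to compactness is needed here, since the signature is finite by hypothesis; the compactness lemma stated just before would only be invoked to bootstrap from this finite-signature result to an arbitrary signature.
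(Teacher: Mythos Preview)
Your proof is correct and follows exactly the approach the paper signals: the paper does not actually prove this lemma but simply attributes it to Fra{\"\i}ss{\'e} and states that ``Ramsey's theorem yields the following result''. Your argument supplies the omitted details in the standard way---a finite coloring of $m$-subsets recording the full relational type over $F$, followed by iterated application of the infinite Ramsey theorem---so there is nothing to compare beyond noting that you have written out what the paper leaves implicit.
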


We also need some properties of the kernel. Most of these properties
are based on the following simple lemma (see~\cite{Pouzet.1979.RM} for
finite signature and~\cite[3 of Lemma 2.12]{Pouzet_Sobrani.2001} for
the general case).
\begin{lemma} \label{minusab}
  For all $a,b\in E$,\,
$\mathcal {A}(R_{\restriction E\setminus \{a\}})=\age (R)\,\Longrightarrow\, \mathcal {A}(R_{\restriction E\setminus \{a, b\}})=\mathcal {A}(R_{\restriction E\setminus \{b\}})$.
\end{lemma}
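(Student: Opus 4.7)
I would prove the two inclusions separately. The inclusion $\age(R_{\restriction E\setminus\{a,b\}}) \subseteq \age(R_{\restriction E\setminus\{b\}})$ is immediate, since any finite restriction of $R_{\restriction E\setminus\{a,b\}}$ is in particular a finite restriction of $R_{\restriction E\setminus\{b\}}$. For the reverse inclusion, let $\tau \in \age(R_{\restriction E\setminus\{b\}})$, realized by some finite $B \subseteq E\setminus\{b\}$. If $a \notin B$, then $B \subseteq E\setminus\{a,b\}$ already witnesses $\tau \in \age(R_{\restriction E\setminus\{a,b\}})$, so the only interesting case is $a \in B$.

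The crucial move is to apply the hypothesis $\age(R)=\age(R_{\restriction E\setminus\{a\}})$ not to $B$ itself but to the ``padded'' finite set $B \cup \{b\} \subseteq E$. Since $\type{B\cup\{b\}} \in \age(R)=\age(R_{\restriction E\setminus\{a\}})$, there exists a finite $D \subseteq E\setminus\{a\}$ and an isomorphism $f\colon B\cup\{b\} \to D$. Restricting $f$ to $B$, the set $C := D\setminus\{f(b)\}=f(B)$ is a copy of $B$ contained in $E\setminus\{a\}$ and therefore a realization of $\tau$. This realization lies in the desired $E\setminus\{a,b\}$ precisely when $b \notin D$, or when $b \in D$ and $f(b)=b$, since in those two cases $b \notin f(B)$.

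The main obstacle is to rule out the remaining situation $b \in D$ together with $f(b)\neq b$, in which some $x \in B$ satisfies $f(x)=b$. I would handle this by iterating the construction: replace the padding element $b$ by a ``fresh'' element $b' \in E\setminus(B\cup\{a,b\})$ and apply the hypothesis to $B\cup\{b,b_1,\dots,b_n\}$ for increasing $n$. Each iteration produces a new isomorphism $f_n$, and an infinite pigeonhole argument on the possible positions of $b$ in the images $f_n(B\cup\{b,b_1,\dots,b_n\}) \subseteq E\setminus\{a\}$ forces, for some $n$ and some choice of isomorphism, either that $b$ lies outside the image or that $b$ is a fixed point of $f_n$ restricted to the padding set, delivering a copy of $B$ inside $E\setminus\{a,b\}$.

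For finite signature, the iteration can be carried out cleanly by invoking Lemma~\ref{chainablerestriction} applied to $F:=B\cup\{b\}$: it produces an infinite $F$-chainable restriction $R_{\restriction E'}$ of $R$, which provides enough homogeneity to move an arbitrary copy of $B$ inside $E'\setminus\{a,b\}$. The fully general case, without the finiteness assumption on the signature, follows the same strategy but needs the compactness/kernel machinery developed in \cite{Pouzet.1979.RM} and \cite{Pouzet_Sobrani.2001}.
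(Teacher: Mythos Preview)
The paper does not include a proof of this lemma; it merely records it as a ``simple lemma'' and cites~\cite{Pouzet.1979.RM} for the finite-signature case and~\cite[3 of Lemma~2.12]{Pouzet_Sobrani.2001} for the general case. So there is no in-paper argument to compare your proposal against, and your final sentence deferring to that same machinery is in line with what the paper does.

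Your opening moves are correct and natural: the trivial inclusion, the reduction to $a\in B$, and the idea of applying the hypothesis to the padded set $B\cup\{b\}$ are exactly how one starts. You also correctly isolate the only genuine obstruction, namely the case where the isomorphism $f$ sends some $c\in B$ to $b$.

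Where the proposal falls short is in the resolution of that obstruction. Your ``infinite pigeonhole'' is not a proof as written: for each $n$ the hypothesis hands you a \emph{single} isomorphism $f_n\colon B\cup\{b,b_1,\dots,b_n\}\to D_n\subseteq E\setminus\{a\}$, and nothing forces $f_n^{-1}(b)$ to land outside the fixed finite set $B$. Enlarging the padding does not help, because you have no control over which isomorphism the hypothesis produces; the bad case $f_n^{-1}(b)\in B$ can recur for every $n$. Your appeal to Lemma~\ref{chainablerestriction} with $F:=B\cup\{b\}$ is similarly inconclusive: that lemma yields a restriction that is $(B\cup\{b\})$-chainable, meaning it is homogeneous only \emph{outside} $B\cup\{b\}$, whereas the element you need to dislodge is $a\in B$. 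So neither device, as stated, closes the gap. The outline is right up to the obstacle; what is missing is precisely the extra idea (or the cited external argument) that forces a copy of $B$ into $E\setminus\{a,b\}$.
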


From this property, we  have easily:
\begin{lemma}\label{lem:restrictkernel}
Let $R$ be a relational structure with domain $E$, and $E'$ be a subset of $E$. If $K(R)\subseteq E'$ and $E\setminus E'$ is finite then $R$ and $R_{\restriction E'}$ have the same age and the same kernel.
\end{lemma}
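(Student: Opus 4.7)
The plan is to iteratively peel off the elements of $E\setminus E'$ one at a time, showing at each step that removing a single non-kernel point preserves both the age and the kernel. Since $E\setminus E'$ is finite, this will yield the desired conclusion after finitely many steps.

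The core single-step lemma I would establish first is the following: if $a \in E\setminus K(R)$, then $R_{\restriction E\setminus\{a\}}$ has the same age as $R$ (which is immediate from the definition of the kernel) and also the same kernel. For this second claim, pick any $y \in E\setminus\{a\}$. By definition, $y \in K(R_{\restriction E\setminus\{a\}})$ iff $\age(R_{\restriction E\setminus\{a,y\}}) \neq \age(R_{\restriction E\setminus\{a\}}) = \age(R)$. Since $a \notin K(R)$, Lemma~\ref{minusab} applies and gives $\age(R_{\restriction E\setminus\{a,y\}}) = \age(R_{\restriction E\setminus\{y\}})$. Therefore the above condition becomes $\age(R_{\restriction E\setminus\{y\}}) \neq \age(R)$, i.e., $y \in K(R)$. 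Hence $K(R_{\restriction E\setminus\{a\}}) = K(R) \cap (E\setminus\{a\}) = K(R)$, the last equality using that $a \notin K(R)$.

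Now enumerate $E\setminus E' = \{a_1,\dots,a_n\}$ and set $R_0 := R$ and $R_i := R_{\restriction E\setminus\{a_1,\dots,a_i\}}$ for $1 \leq i \leq n$. By induction on $i$, I would prove the joint statement
\[
\age(R_i) = \age(R) \qquad\text{and}\qquad K(R_i) = K(R).
\]
The base case $i = 0$ is trivial. For the inductive step, the hypothesis $K(R) \subseteq E'$ gives $a_{i+1} \notin K(R) = K(R_i)$, so the single-step lemma applied to the relational structure $R_i$ and the element $a_{i+1}$ yields both $\age(R_{i+1}) = \age(R_i) = \age(R)$ and $K(R_{i+1}) = K(R_i) = K(R)$. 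At $i = n$ this gives $\age(R_{\restriction E'}) = \age(R)$ and $K(R_{\restriction E'}) = K(R)$, which is the claim.

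The main (minor) obstacle is the bookkeeping of the inductive hypothesis: one might be tempted to prove only the age statement, but then re-applying Lemma~\ref{minusab} at a subsequent step requires knowing that the next element to be peeled off lies outside the kernel of the current restriction, not merely outside $K(R)$. Carrying the kernel equality $K(R_i) = K(R)$ through the induction is precisely what makes each step go through cleanly.
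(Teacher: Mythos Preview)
Your proof is correct and is precisely the argument the paper has in mind: the paper does not spell out a proof but simply says the lemma follows easily from Lemma~\ref{minusab}, and your single-step reduction plus finite induction is the natural way to make this explicit. Your remark about carrying the kernel equality through the induction (so that each $a_{i+1}$ is known to lie outside $K(R_i)$, not just $K(R)$) is exactly the right bookkeeping.
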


\begin{lemma}\label{lem:restrictkernel2}
  Let $R$ be a relational structure with domain $E$, and $F$ be a
  finite subset of $E\setminus K(R)$. Assume furthermore that $K(R)$
  is finite. Then, for every finite subset
  $A$ of $E$ there is some subset $A'$ of $E\setminus F$ such that
  $A\approx A'$ and $A\cap K(R)=A'\cap K(R)$.
\end{lemma}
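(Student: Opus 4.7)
Setting $A_0 := A \cap K(R)$ and $A_1 := A \setminus K(R)$, observe that since $F \cap K(R) = \emptyset$, we have $A_0 \subseteq K(R) \subseteq E \setminus F$; the conclusion therefore amounts to producing some $A'_1 \subseteq (E \setminus F) \setminus (K(R) \cup A_0)$ with $|A'_1| = |A_1|$ and $A_0 \cup A_1 \approx A_0 \cup A'_1$, since the set $A' := A_0 \cup A'_1$ will then lie in $E \setminus F$ and automatically satisfy $A' \cap K(R) = A_0 = A \cap K(R)$.

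My plan starts by invoking Lemma~\ref{lem:restrictkernel} with $E' := E \setminus F$: since $K(R) \subseteq E'$ and $F$ is finite, $R$ and $R_{\restriction E\setminus F}$ share both age and kernel, so one may work entirely inside $E \setminus F$ without loss. Next, enumerate $A_1 = \{a_1, \dots, a_p\}$ and build $c_1, \dots, c_p \in (E \setminus F) \setminus K(R)$ inductively --- pairwise distinct and avoiding $A_0$ --- so that at each stage $i$ the map $\sigma_i := \id_{A_0} \cup \{a_j \mapsto c_j : j \leq i\}$ is a partial isomorphism of $R$. Setting $A' := A_0 \cup \{c_1, \dots, c_p\}$ then completes the proof.

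The crux of the argument is the inductive step: given $\sigma_{i-1}$, one must find $c_i$ extending it while avoiding the finite forbidden set $F \cup K(R) \cup A_0 \cup \{c_1, \dots, c_{i-1}\}$. Since $a_i \notin K(R)$, iterating Lemma~\ref{minusab} yields $\age(R_{\restriction E \setminus B}) = \age(R)$ for every finite $B \subseteq E \setminus K(R)$; applying this with $B := F \cup \{c_1, \dots, c_{i-1}\}$ realizes the isomorphism type of $A_0 \cup \{a_1, \dots, a_i\}$ somewhere in $E \setminus B$, and we would like to choose that realization to agree with the already-fixed image $A_0 \cup \{c_1, \dots, c_{i-1}\}$ of $\sigma_{i-1}$, reading off the value of $c_i$ from it.

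I expect this last extension step to be the main obstacle: iteration of Lemma~\ref{minusab} delivers \emph{abstract} realizations of isomorphism types outside any prescribed finite set disjoint from $K(R)$, but not immediately ones \emph{extending} a prefixed partial isomorphism. Bridging the gap exploits the finiteness of $K(R)$ (so the parameter set $A_0$ is finite) and the fact that $a_i \notin K(R)$ (so its $1$-type over the growing parameter set has infinitely many realizations in $E$, enough to avoid any finite forbidden subset). A possibly cleaner route, which I would try in parallel, is to enrich $R$ by unary predicates marking each element of $K(R)$, verify via Lemma~\ref{minusab} that this enrichment leaves the kernel unchanged, and then apply Lemma~\ref{lem:restrictkernel} to the enriched structure: isomorphisms of enriched substructures preserve the $K(R)$-markings pointwise, so any realization of the enriched type of $A$ inside $E \setminus F$ automatically meets the kernel-intersection requirement.
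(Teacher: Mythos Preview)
Both of your approaches have real gaps, and neither matches the paper's route.

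For the inductive construction: your self-diagnosis is correct, and the obstacle is genuine. Iterating Lemma~\ref{minusab} yields a copy of $R_{\restriction A_0\cup\{a_1,\dots,a_i\}}$ inside $E\setminus B$, but with no control over where $A_0\cup\{a_1,\dots,a_{i-1}\}$ is sent. Extending a \emph{fixed} partial isomorphism one point at a time requires a homogeneity-type property that is not assumed, and Lemmas~\ref{minusab} and~\ref{lem:restrictkernel} alone do not supply it.

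For the enrichment $R^*$: the step ``verify via Lemma~\ref{minusab} that $K(R^*)=K(R)$'' is circular. Spelling it out, $x\notin K(R^*)$ means that every finite $B$ admits some $B'\subseteq E\setminus\{x\}$ with $R^*_{\restriction B'}\cong R^*_{\restriction B}$; but an $R^*$-isomorphism is precisely an $R$-isomorphism that fixes $B\cap K(R)$ pointwise and satisfies $B'\cap K(R)=B\cap K(R)$. Thus the inclusion $K(R^*)\subseteq K(R)$ is exactly the lemma you are proving (for $F$ a singleton), with an even stronger conclusion. Lemma~\ref{minusab}, whether applied to $R$ or to $R^*$, does not break this loop.

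The paper proceeds differently, invoking Proposition~2.17 of~\cite{Pouzet_Sobrani.2001}: every finite subset of $K(R)$ is \emph{localized} by some finite $L\subseteq E$, meaning (roughly) that any local isomorphism of $R$ whose domain contains $L$ must fix that subset pointwise. One takes $L$ localizing $A\cap K(R)$ (or, since $K(R)$ is finite, all of $K(R)$), uses Lemma~\ref{lem:restrictkernel} to realize the type of $A\cup L$ inside $E\setminus F$ via some isomorphism $\psi$, and then localization forces $\psi$ to fix $K(R)$ pointwise, so that $A':=\psi(A)$ has the required properties. Your enrichment idea is in fact morally equivalent to localization --- indeed $K(R^*)=K(R)$ follows \emph{from} the localization result --- but localization is the substantive ingredient, and it does not reduce to Lemma~\ref{minusab}.
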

The proof of this lemma can obtained via the existence of a finite
subset of $E$ localizing $A\cap K(R)$ (see Proposition 2.17
of~\cite{Pouzet_Sobrani.2001}).

Trivially, we have:
\begin{lemma} \label{lem:kernelmonopart}If $E'$ is an infinite monomorphic part of $R$  then $K(R)\cap E'=\emptyset$. 
\end{lemma}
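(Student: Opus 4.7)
The plan is to verify Lemma~\ref{lem:kernelmonopart} directly from the definitions: given $x \in E'$, I will check that every finite substructure of $R$ is isomorphic to one avoiding $x$, which is precisely the condition $\age(R_{\restriction E\setminus\{x\}})=\age(R)$ and hence $x\notin K(R)$. The only mechanism needed is the ability to substitute one point of $E'$ for another without changing the induced structure; this is exactly what the monomorphic-part property (Lemma~\ref{remark.definition.monomorphic_set}(iii), combined with Lemma~\ref{lemma.set_partition_monomorphic_blocks}) provides.

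More concretely, let $x\in E'$ and fix an arbitrary finite $A\subseteq E$. If $x\notin A$ there is nothing to do. Otherwise, since $E'$ is infinite while $A$ is finite, I pick some $y\in E'\setminus A$ and set $A':=(A\setminus\{x\})\cup\{y\}$. Then $|A|=|A'|$ and $A\setminus E' = A'\setminus E'$, because both swapped elements $x$ and $y$ lie in $E'$. The defining property of a monomorphic part then forces $R_{\restriction A}\cong R_{\restriction A'}$, and by construction $x\notin A'$. This shows $\age(R)\subseteq\age(R_{\restriction E\setminus\{x\}})$; the reverse inclusion is automatic, and we conclude $x\notin K(R)$.

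There is no serious obstacle here — this is why the excerpt announces the lemma with ``Trivially, we have''. The content of the proof is just the observation that freely sliding points within an infinite monomorphic part preserves isomorphism type, so the kernel, which detects points that cannot be removed without shrinking the age, must be disjoint from every such part.
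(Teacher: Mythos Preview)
Your proof is correct and is precisely the trivial verification the paper has in mind: the paper gives no proof at all, merely prefacing the lemma with ``Trivially, we have'', and your argument is the obvious one---swap $x$ for a fresh point of $E'$ using condition~(iii) of Lemma~\ref{remark.definition.monomorphic_set}. Nothing to add.
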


Using Lemma~\ref{lemma:compactness} and  Lemma~\ref{lem:restrictkernel2} we get:
\begin{lemma}\label{lem:kernelchainable} If $R$ is almost-chainable  then $R$ is $K(R)$-chainable. 
\end{lemma}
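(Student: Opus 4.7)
The plan is to verify $K(R)$-chainability via the criterion of Lemma~\ref{lemma:compactness}: it suffices to show, for each finite $F_0\subseteq K(R)$ and each finite $I'\subseteq I$, that the finite-signature restriction $T:=R^{I'}_{\restriction (E\setminus K(R))\cup F_0}$ is $F_0$-chainable. A preliminary observation is that $K(R)\subseteq F$, where $F$ is the finite set witnessing that $R$ is $F$-chainable; in particular $K(R)$ is finite, as needed to apply Lemma~\ref{lem:restrictkernel2}. Indeed, for any $a\in B:=E\setminus F$, any finite subset $A_B\subseteq B$ can be carried by an order-preserving bijection of the infinite chain $(B,\leq)$ into $B\setminus\{a\}$; extending by $\id_F$ this bijection becomes a local isomorphism of $R$, which shows that $\age(R_{\restriction E\setminus\{a\}})=\age(R)$, and hence $a\notin K(R)$.

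Fixing $F_0$ and $I'$, I would then instantiate Lemma~\ref{lemma:compactness} on the $F$-chainability of $R$ with the finite subset $F':=F_0\cup(F\setminus K(R))\subseteq F$; since $(E\setminus F)\cup F'=(E\setminus K(R))\cup F_0$, one obtains that $T$ is already $F'$-chainable, with chain order $\leq$ inherited from $B$. The remaining task is to absorb the finitely many elements of $F\setminus K(R)$ into the chain, lowering the fixed part from $F'$ down to $F_0$. To that end, I would extend $\leq$ to a linear order $\leq'$ on $B\cup(F\setminus K(R))=E\setminus K(R)$ by inserting each element of $F\setminus K(R)$ at an appropriate position of $(B,\leq)$, and verify that every local isomorphism $\sigma$ of $(E\setminus K(R),\leq')$ extended by $\id_{F_0}$ is a local isomorphism of $T$.

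The main obstacle is precisely this verification in the case when $\sigma$ moves some $a\in F\setminus K(R)$ to a distinct element of $E\setminus K(R)$. Here Lemma~\ref{lem:restrictkernel2} comes in, applied to $R$ with the finite set $F\setminus K(R)\subseteq E\setminus K(R)$: every finite $A\subseteq E$ admits an isomorphic copy $A'\subseteq E\setminus(F\setminus K(R))$ agreeing with $A$ on $K(R)$. Transporting the candidate local iso through such an isomorphism reduces the verification to the already-established $F'$-chaining of $(B,\leq)$, since the transported iso no longer involves $F\setminus K(R)$. Some care is required to realise the abstract copy furnished by Lemma~\ref{lem:restrictkernel2} compatibly with the order $\leq'$; I expect to handle this either by inserting the elements of $F\setminus K(R)$ at positions that make $(E\setminus K(R),\leq')$ order-isomorphic to $(B,\leq)$ (so that every order-preserving bijection of the extended chain pulls back to one of $(B,\leq)$), or by inducting on $|F\setminus K(R)|$ and absorbing one element at a time, invoking Ramsey's theorem via Lemma~\ref{chainablerestriction} to locate a cofinal witness inside $B$.
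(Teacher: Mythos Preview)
Your setup matches the paper's: you invoke exactly the two lemmas the paper cites (Lemma~\ref{lemma:compactness} and Lemma~\ref{lem:restrictkernel2}), and your preliminary observation that $K(R)\subseteq F$ and the reduction to showing that $T:=R^{I'}_{\restriction (E\setminus K(R))\cup F_0}$ is $F_0$-chainable are correct and well explained.

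However, the proposal has a genuine gap at the decisive step, and you acknowledge it yourself. The ``transporting'' idea does not do what you need. Lemma~\ref{lem:restrictkernel2} produces, for a given finite $A$, an isomorphic copy $A'\subseteq E\setminus G$ with $A\cap K(R)=A'\cap K(R)$; but $F_0$-chainability asks that a \emph{specific} bijection $\sigma$ (extended by $\id_{F_0}$) be a local isomorphism, not merely that $\dom\sigma$ and $\im\sigma$ be abstractly isomorphic. Even if the copies furnished by the lemma fix $F_0$ pointwise, conjugating $\sigma$ by these isomorphisms yields a bijection between subsets of $B\cup K(R)$ that has no reason to be $\leq$-increasing on $B$ or to fix the elements of $G$ sitting in $F'\setminus F_0$; hence the $F'$-chainability of $T$ cannot be invoked. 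Your first fallback (choosing $\leq'$ so that $(E\setminus K(R),\leq')\cong(B,\leq)$ as chains) fails for the same reason: an order-isomorphism of chains does not transport the relational structure $T$, so pulling back a local iso of $\leq'$ to one of $\leq$ tells you nothing about whether the original map preserves the relations of $T$. Your second fallback (induction on $|F\setminus K(R)|$ using Ramsey via Lemma~\ref{chainablerestriction}) may be salvageable, but as stated it only yields chainability on a restriction, not on $T$ itself; the step that would place a given $g\in F\setminus K(R)$ at a definite position in $(B,\leq)$ so that $g$ becomes genuinely interchangeable with nearby chain elements over all of $F_0\cup B$ is precisely the missing idea, and nothing in your sketch supplies it.

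In short, you have the right ingredients and the right overall shape, but the passage from $F'$-chainable to $F_0$-chainable is not carried out, and neither of your suggested completions works as written.
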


 \begin{theorem} \label{monomorphiccomp}
Let $R:= (E, (\rho_i)_{i \in I})$ be a relational structure, $E'$ be a
subset of $E$ and $F:= E'\setminus E$. Let us consider the following
properties:
 \begin{enumerate}
 \item[{(i)}]   $R$ is $F$-chainable;
\item[(ii)] $R$ is $F$-monomorphic;
\item[(iii)] $E'$ is a monomorphic part of $R$.
\end{enumerate}
Then $(i)\Rightarrow (ii)\Rightarrow (iii)$. If $E'$ is infinite then
$(ii)\Rightarrow (i)$. If $E'$ is infinite and $E'$ is a monomorphic
component of $R$ then $(iii)\Rightarrow (i)$.
 \end{theorem}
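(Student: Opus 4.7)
The proof I propose splits into the four implications $(i)\Rightarrow (ii)$, $(ii)\Rightarrow (iii)$, $(ii)\Rightarrow (i)$ for infinite $E'$, and $(iii)\Rightarrow (i)$ for infinite monomorphic components, with the last one carrying the technical weight. For $(i)\Rightarrow (ii)$, let $\leq$ be a linear order on $E'$ witnessing $F$-chainability; the unique $\leq$-order-preserving bijection between any two $n$-element subsets $A,A'$ of $E'$ is a local isomorphism of $(E',\leq)$, so extending it by the identity on $F$ gives a local isomorphism of $R$ fixing $F$ pointwise, which is exactly what $F$-monomorphy requires. For $(ii)\Rightarrow (iii)$, I would verify condition (iii) of Lemma~\ref{remark.definition.monomorphic_set}: given $A,A'\subseteq E$ with $|A|=|A'|$ finite and $A\setminus E'=A'\setminus E'=:F_0$, the parts $A_1:=A\cap E'$ and $A_1':=A'\cap E'$ lie in $E'$ with the same cardinality, so $F$-monomorphy supplies an isomorphism $\phi: R_{\restriction A_1\cup F}\to R_{\restriction A_1'\cup F}$ identity on $F$, whose restriction to $A_1\cup F_0=A$ is an isomorphism onto $A_1'\cup F_0=A'$.

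For $(ii)\Rightarrow (i)$ with $E'$ infinite, the natural tool is Lemma~\ref{lemma:compactness}, which reduces $F$-chainability of $R$ to $F'$-chainability of $R^{I'}_{\restriction E'\cup F'}$ for every finite $F'\subseteq F$ and every finite $I'\subseteq I$. $F'$-monomorphy of such a restriction is inherited from $F$-monomorphy of $R$, since any isomorphism $R_{\restriction A\cup F}\to R_{\restriction A'\cup F}$ that is the identity on $F$ restricts to an isomorphism $R^{I'}_{\restriction A\cup F'}\to R^{I'}_{\restriction A'\cup F'}$ that is the identity on $F'$. With $I'$ finite and $E'$ infinite, the classical theorem of Fra\"iss\'e for finite signatures, obtained by treating the elements of $F'$ as named constants and combining Lemma~\ref{chainablerestriction} with a back-and-forth saturation argument fed by $F'$-monomorphy, then yields the required $F'$-chainability.

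For $(iii)\Rightarrow (i)$ with $E'$ an infinite monomorphic component, the strategy is to upgrade (iii) to (ii) and then reuse the preceding step. Specialising (iii) to $A,A'\subseteq E'$ shows that $R_{\restriction E'}$ is monomorphic; since $E'$ is infinite, it follows that $R_{\restriction E'}$ is chainable by some linear order $\leq$ on $E'$. The remaining task is to show that for every $A,A'\in [E']^n$ and every finite $F_0\subseteq F$ the isomorphism $R_{\restriction A\cup F_0}\cong R_{\restriction A'\cup F_0}$ provided by (iii) can be chosen to fix $F_0$ pointwise. I would reduce to finite signature by Lemma~\ref{lemma:compactness}, and then use that $K(R)\cap E'=\emptyset$ by Lemma~\ref{lem:kernelmonopart} together with Lemma~\ref{lem:restrictkernel2} to absorb the elements of $F\setminus K(R)$, so that the action of an isomorphism on $F_0$ is constrained to the finite kernel. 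Maximality of $E'$ as a monomorphic component should then prevent any isomorphism from moving an element of $F_0$ into $E'$, since doing so would produce a strictly larger monomorphic part, contradicting maximality. Pinning down this maximality-plus-kernel argument and threading it cleanly through the compactness reduction is the main obstacle I anticipate.
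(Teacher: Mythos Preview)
Your treatment of $(i)\Rightarrow(ii)\Rightarrow(iii)$ is fine, and your plan for $(ii)\Rightarrow(i)$ is in the right spirit, though the paper's argument is more direct than a back-and-forth: once Lemma~\ref{chainablerestriction} produces an infinite $E'_1\subseteq E'$ on which $R^{I'}_{\restriction E'_1\cup F'}$ is $F'$-chainable, one simply transports any finite $E''\subseteq E'$ into $E'_1$ by an $F$-monomorphy map fixing $F'$, rather than saturating.

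The genuine gap is in $(iii)\Rightarrow(i)$. Your plan is to upgrade $(iii)$ to $(ii)$ by forcing the isomorphism to fix each finite $F_0\subseteq F$ pointwise, using Lemma~\ref{lem:restrictkernel2} and a maximality argument. Two concrete problems: first, Lemma~\ref{lem:restrictkernel2} has the hypothesis that $K(R)$ is finite, which is nowhere assumed here, and you cannot simply ``absorb'' elements of $F\setminus K(R)$ without it. Second, your maximality step does not go through as stated: the existence of a single isomorphism $R_{\restriction A\cup F_0}\to R_{\restriction A'\cup F_0}$ sending some $x\in F_0$ into $E'$ does not imply that $E'\cup\{x\}$ is a monomorphic part, so no contradiction with maximality is produced. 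You correctly flag this as the obstacle, but it is not a matter of cleanup; the route itself is blocked.

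The paper avoids upgrading to $(ii)$ altogether. The key idea you are missing is to work with restrictions of the form $R_{\restriction E'\cup F''}$ where $F''\subseteq F$ is finite and is \emph{exactly} the kernel of that restriction. Maximality of $E'$ is used, not to constrain isomorphisms, but to construct such an $F''$ containing any prescribed finite $F'$: for each $x\in F'$ one picks witnesses $A_x\not\approx A'_x$ to the failure of $E'\cup\{x\}$ being a monomorphic part, lets $V$ collect them, and sets $F'':=K(R_{\restriction E'\cup V})$; one then checks $F'\subseteq F''\subseteq F$ (Lemmas~\ref{lem:restrictkernel} and~\ref{lem:kernelmonopart}). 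Now $R_{\restriction E'\cup F''}$ has bounded profile because $E'$ is a monomorphic part of it, so Theorem~\ref{theorem.boundedProfile} makes it almost chainable, and Lemma~\ref{lem:kernelchainable} then gives $F''$-chainability, hence $F'$-chainability of $R_{\restriction E'\cup F'}$; compactness (Lemma~\ref{lemma:compactness}) finishes. The detour through Theorem~\ref{theorem.boundedProfile} is precisely what lets you trade the fragile control of isomorphisms you were attempting for a global chainability statement.
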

 
 \begin{proof}
 The  implications $(i)\Rightarrow (ii)\Rightarrow (iii)$ are obvious. Implication  $(ii) \Rightarrow (i)$ when  $E'$ is infinite is due to  Fra\"{\i}ss\'e. A proof in the case of a finite signature is in \cite{Fraisse_Pouzet.1971}. We just recall the principle of the proof. We prove first that  for each finite subset $I'$ of $I$,  each finite subset $F'$ of $F$ and each finite subset $E''$ of $E'$, the restriction  $R':= R^{I'}_{E'' \cup F'}$ is $F'$-chainable. For that we apply   Lemma
\ref{chainablerestriction}.  It yields  an infinite subset $E'_1$ of $E'$ on which $R'_1= R^{I'}_{E'_1 \cup F'}$ is $F$-chainable. Since  $R$ is $F$-monomorphic, there is a local  isomorphism fixing $F'$ pointwise which send  $E'' \cup F'$ into $E'_1\cup F'$; thus $R':= R^{I'}_{\restriction E'' \cup F'}$ is $F'$-chainable. We conclude with  Lemma~\ref{lemma:compactness}.  The proof of  $(iii)\Rightarrow (i)$ when $E'$ is a component relies on the following claim: 

{\bf Claim 1.} {\it Let $F'$ be a finite subset of $F$. There is some finite subset $F''$ of $F$ containing $F'$ such that $K(R_{\restriction E'\cup F''})=F''$.}

{\it Proof of Claim~1.}
Let $x\in F$. Since the components of $R$ are the maximal monomorphic
parts of $R$, the set $E'\cup \{x\}$ is not a monomorphic part of
$R$. Hence, there are $n_x$ and $A_{x}\not\approx A'_{x}$ in
$[E]^{n_x}$ such that
\begin{equation}
  \label{equ:claimmocomp1}
  A_{x}\setminus (E'\cup \{x\})= A'_{x}\setminus (E'\cup \{x\})\,.
\end{equation}
For each $x\in F'$ select $A_x$ and $A'_x$ as above.  Define
$V:=\bigcup_{x\in F'} A_x \cup A'_x$, as well as $R':= R_{\restriction
  E'\cup V}$, and $F'':=K(R')$.

{\bf Subclaim 1.} $K(R_{\restriction E'\cup F''})=F''$.

{\it Proof of Subclaim 1.} Since $(E'\cup V)\setminus (E'\cup F'')$ is finite and $F''=K(R')$, Lemma~\ref{lem:restrictkernel} asserts that $R_{\restriction E'\cup V}$ and $R_{\restriction E'\cup F''}$ have the same age and the same kernel. \hfill \qed

{\bf Subclaim 2.} $F'\subseteq F''\subseteq F$. 

{\it Proof of Subclaim 2.} Since $E'$
is a monomorphic component of $R$, this is a monomorphic part of $R'$ and Lemma~\ref{lem:kernelmonopart} asserts that $K(R')$ is disjoint from $E'$. This
yields $F''\subseteq V\setminus E'\subseteq F$. Suppose that $F'\not \subseteq F''$. Let $x\in F'\setminus F''$. Then, since  $F''\subseteq V\setminus E'$, we have $F'' \subseteq E\setminus (E'\cup \{x\})$. Thus 
from~\eqref{equ:claimmocomp1} we have $A_{x}\cap F''= A'_{x}\cap F''$. Since $V\setminus (E'\cup F'')$ is finite, it follows from Lemma~\ref{lem:restrictkernel2} that  there are $A, A'\in 
[F''\cup E']^{n_{x}}$ such that $A\cap F''=A_{x}\cap F''$, $A'_{x}\cap F''=
A' \cap F''$, and  the restrictions $R'_{\restriction A}$ and
$R'_{\restriction A'}$ are respectively isomorphic  to
$R'_{\restriction A_{x}}$ and $R'_{\restriction A'_{x}}$. Since $E'$
is a monomorphic component of $R'$,  $R'_{\restriction A}$ and $R'_{\restriction A'}$ are
isomorphic. However this implies that $A_{x}\approx A'_{x}$, a
contradiction. Therefore, $F'\subseteq F''\subseteq V$. This completes the proof of the claim.  \hfill \qed

With these two subclaims, the proof of Claim 1 is complete. \hfill \qed

Now, since $E'$ is a monomorphic part of $R_{\restriction E'\cup F''}$, the profile of $R_{\restriction E'\cup F''}$ is bounded. According to the implication $(a)\Rightarrow (c)$ in Theorem~\ref{theorem.boundedProfile}, $R_{\restriction E'\cup F''}$ is almost chainable. According to Lemma~\ref{lem:kernelchainable}, $R_{\restriction E'\cup F''}$ is $K( R_{\restriction E'\cup F''})$-chainable, that is $F''$-chainable. It follows that $R_{\restriction  E'\cup F'}$ is $F'$-chainable. Since this holds for every finite subset $F'$ of $F$, it follows from Lemma~\ref{lemma:compactness} that $R$ is $F$-chainable.  Hence $(ii)$ holds. This completes the proof of Theorem~\ref{monomorphiccomp}.\end{proof}

Theorem~\ref{thm:charcfinitemonomorph} follows immediately from
Theorem~\ref{monomorphiccomp}. Indeed, if $R$ admits a finite
monomorphic decomposition, we may choose one such that all the finite
blocks are singletons. Hence, if $(E_x)_{x\in X}$ is such a
decomposition, then a lexicographical sum, in any order, of the chains
$(E_x, \leq_x)$ given by (i) of Theorem~\ref{monomorphiccomp} yields a
linear order on $E$ for which the $E_x$ are intervals and every local
isomorphism preserving the $E_x$'s preserves $R$. %

In the special case of relational structures made of unary or binary relations, Theorem~\ref{thm:charcfinitemonomorph} yields the following characterization:

\begin{corollary} \label{cor:binary}If a relational structure $R$ is at most binary, it has a finite monomorphic decomposition if and only if  it is a lexicographical sum of chainable relational structures indexed by  a finite relational structure.
\end{corollary}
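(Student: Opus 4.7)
I will treat both directions, with the nontrivial one leaning on Theorem~\ref{thm:charcfinitemonomorph}.

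For the easy implication, suppose $R$ is a lexicographical sum $\sum_{x\in X} S_x$ of chainable relational structures $(S_x,\leq_x)$ indexed by a finite relational structure $R_X$, and let $E_x$ denote the domain of $S_x$. I claim $(E_x)_{x\in X}$ is a finite monomorphic decomposition. Given finite subsets $A,A'$ of $E$ with $X^A=X^{A'}$, the chainability of $S_x$ yields, for each $x\in X$, an order-preserving bijection $\sigma_x\colon A\cap E_x\to A'\cap E_x$ which is an isomorphism from $R_{\restriction A\cap E_x}$ onto $R_{\restriction A'\cap E_x}$. Since $R$ is at most binary and the inter-block relations are prescribed by $R_X$ alone, the disjoint union $\sigma=\bigsqcup_{x\in X}\sigma_x$ is an isomorphism from $R_{\restriction A}$ onto $R_{\restriction A'}$.

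For the converse, assume $R$ admits a finite monomorphic decomposition. I invoke Theorem~\ref{thm:charcfinitemonomorph} to obtain a linear order $\leq$ on $E$ and a finite partition $(E_x)_{x\in X}$ of $E$ into $\leq$-intervals such that every local isomorphism of $(E,\leq)$ preserving each $E_x$ is a local isomorphism of $R$. First, each $R_{\restriction E_x}$ is chainable by $\leq_{\restriction E_x}$: any local isomorphism of $(E_x,\leq_{\restriction E_x})$ extends by the identity on $E\setminus E_x$ to an interval-preserving local isomorphism of $(E,\leq)$, hence to a local isomorphism of $R$. Second, I build the indexing structure $R_X$. For $x\neq y$ with, say, $E_x<E_y$, and for $a,a'\in E_x$, $b,b'\in E_y$, the partial bijection $\{a,b\}\to\{a',b'\}$ sending $a\mapsto a'$ and $b\mapsto b'$ is order-preserving (both $a<b$ and $a'<b'$) and interval-preserving, hence a local isomorphism of $R$. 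Therefore, for every binary $\rho_i$, membership of $(a,b)$ in $\rho_i$ depends only on $(x,y)$; the unary type within $E_x$ is similarly uniform via singleton local isomorphisms $\{a\}\to\{a'\}$. Transporting these uniform data to $X$ defines $R_X$, and since $R$ is at most binary, $R$ decomposes as the lexicographical sum $\sum_{x\in X}R_{\restriction E_x}$ along $R_X$.

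The main obstacle, and the only place where the at-most-binary hypothesis is essential, is the uniformity step in the last paragraph: to witness that an inter-block binary relation depends only on the pair of blocks, I use a two-point partial bijection whose order-preservation is automatic because the $E_x$ are intervals. For a ternary or higher arity relation, the analogous argument would require a local isomorphism of $(E,\leq)$ simultaneously moving points in three or more blocks while still preserving each interval, which the conclusion of Theorem~\ref{thm:charcfinitemonomorph} does not in general provide.
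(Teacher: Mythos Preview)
Your proof is correct and follows exactly the route the paper indicates: the paper simply asserts that the corollary is a direct consequence of Theorem~\ref{thm:charcfinitemonomorph}, and you unpack that assertion carefully in both directions.

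One small correction to your closing commentary (not to the proof itself): Theorem~\ref{thm:charcfinitemonomorph} \emph{does} provide interval-preserving local isomorphisms moving points in three or more blocks simultaneously --- any order-preserving, block-preserving partial bijection qualifies, regardless of how many blocks it touches. The genuine obstruction in the ternary case is a tuple with \emph{two} points in one block, say $a,b\in E_x$ and $c\in E_y$: the map $(a,b,c)\mapsto(a',b',c')$ with $a',b'\in E_x$ is order-preserving only when $a<b$ and $a'<b'$ agree, so $\rho(a,b,c)$ may depend on the relative order of $a$ and $b$ within $E_x$, information that a lexicographical sum over a structure on the finite set $X$ cannot record. (Incidentally, the at-most-binary hypothesis is not needed in your easy direction: a lexicographical sum of chainable pieces over a finite index always admits the obvious finite monomorphic decomposition, for relations of any arity.)
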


Let us say that a relational structure $R:= (E,
(\rho_i)_{i \in I})$ is  \emph{almost multichainable} if there is a finite subset $F$ of $E$ and an enumeration
$(a_{x, y})_{(x,y)\in V\times L}$ of the elements of $E\setminus F$ by a
set $V\times L$, where $V$ is finite and $L$ is a linearly ordered set 
such that for every local isomorphism $f$ of $L$, the map $(1_V, f)$
extended by the identity on $F$ is a local isomorphism of $R$ (the map
$(1_V, f)$ is defined by $(1_V,f)(x, y):= (x, f(y))$). If the map $(1_V, f)$
extended by the identity on $F$ is a local isomorphism of $R$ for every permutation $f$ of $L$, $R$ is \emph{cellular}.

The notion of almost multichainability was introduced in
\cite{Pouzet.TR.1978} and appeared in \cite{Pouzet.1981.RI, Pouzet.1979.RM}. It was shown that a relational structure  with polynomially bounded profile and finite kernel has the same age as an almost multichainable relational structure \cite{Pouzet.2006.SurveyProfile}. Cellularity was introduced by J. Schmerl \cite{schmerl.1990} in 1990.   In  \cite{Pouzet.2006.SurveyProfile} it is shown that a graph has a polynomially bounded profile if and only if it is cellular (see  \cite{Pouzet.2006.SurveyProfile}). 

The following easy corollary of Theorem~\ref{thm:charcfinitemonomorph} show that relational structures with a finite monomorphic decomposition are essentially almost multichainable (the converse is far from be true).

\begin{corollary}\label {lem2}
If an infinite relational structure has a finite monomorphic decomposition, then it has some restriction having the same age which is almost multichainable. \end{corollary}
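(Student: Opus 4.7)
The plan is to use the structure theorem for relational structures with a finite monomorphic decomposition (Theorem~\ref{thm:charcfinitemonomorph}) to equip $E$ with a convenient linear order, and then to carve out inside each infinite monomorphic component a subset of a common order type, so that the resulting restriction of $R$ is almost multichainable.

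In more detail, let $(E_x)_{x\in X}$ be the minimal monomorphic decomposition of $R$, let $X_\infty\subseteq X$ be the indices of its infinite blocks, and set $F:=\bigcup_{x\notin X_\infty} E_x$, which is finite. By Theorem~\ref{thm:charcfinitemonomorph} I fix a linear order $\leq$ on $E$ for which each $E_x$ is an interval and every local isomorphism of $(E,\leq)$ preserving the $E_x$'s is a local isomorphism of $R$. For every $x\in X_\infty$ the chain $(E_x,\leq)$ is infinite, so by Erd\H{o}s--Szekeres it contains a subset of order type $\omega$ or of order type $\omega^*$. Since reversing $\leq$ on a single block yields an order on $E$ with exactly the same partition-preserving local isomorphisms (a chain and its reverse share the same local isomorphisms), such reversals preserve the hypothesis of Theorem~\ref{thm:charcfinitemonomorph}; doing this block by block, I may assume that every $E_x$ with $x\in X_\infty$ contains a subset $E'_x$ of order type $L:=\omega$. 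I then set $V:=X_\infty$, $E':=F\cup\bigcup_{x\in X_\infty}E'_x$, and $a_{x,y}:=\phi_x(y)$, where $\phi_x\colon L\to E'_x$ denotes the order-isomorphism.

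Two verifications remain. First, the restriction $R':=R_{\restriction E'}$ has the same age as $R$: every finite shape realizable in $(E_x)_{x\in X}$ is still realizable in $(E'_x)_{x\in X}$, because $F$ contains the finite blocks entirely and each $E'_x$ is infinite when $E_x$ is; combined with the monomorphic decomposition property, this yields $\age(R)=\age(R')$. Second, $R'$ is almost multichainable with parameters $F$, $V$, $L$: for any local isomorphism $f$ of $L$, the induced map $a_{x,y}\mapsto a_{x,f(y)}$ extended by the identity on $F$ acts on each $E'_x$ as $\phi_x\circ f\circ\phi_x^{-1}$, hence order-preservingly; in particular it is a local isomorphism of $(E,\leq)$ preserving each block, and therefore a local isomorphism of $R$ by the choice of $\leq$, and of $R'$ by restriction.

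The main obstacle is the need for a common linearly ordered set $L$ that simultaneously embeds as an ordered subset into each infinite component: a priori, different components might have incompatible order types (for instance $\omega$ versus $\omega^*$). The key to overcoming it is the observation that reversing $\leq$ on individual blocks preserves the hypothesis of Theorem~\ref{thm:charcfinitemonomorph}, so that Erd\H{o}s--Szekeres can be applied blockwise to normalize every infinite block to contain a copy of $\omega$.
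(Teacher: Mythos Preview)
Your proof is correct and is precisely the argument the paper has in mind: the paper gives no explicit proof, merely labeling the statement an ``easy corollary of Theorem~\ref{thm:charcfinitemonomorph}'', and your argument is a faithful fleshing-out of that route---use the linear order and interval partition supplied by Theorem~\ref{thm:charcfinitemonomorph}, normalize each infinite block to contain a copy of $\omega$ (via the infinite Ramsey/Erd\H{o}s--Szekeres argument together with your blockwise reversal trick), and read off the $V\times L$ indexing. One cosmetic remark: Theorem~\ref{thm:charcfinitemonomorph} as stated produces \emph{some} finite partition into intervals, not necessarily the minimal monomorphic decomposition; but your argument only needs a finite monomorphic decomposition whose blocks are intervals of~$\leq$, so you may simply take the partition the theorem hands you rather than the minimal one.
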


\section{Proof of Theorem~\ref{theorem.quasipolynomial}}
\label{section.quasipolynomial}

\subsection{Preliminary steps}

From now on, we assume that $R$ has a finite monomorphic decomposition
$(E_x)_{x\in X}$. The growth rate of the profile $\profile_R$ is at
most $n^{k-1}$ where $k=\vert X_{\infty}\vert $; indeed
$\profile_R(n)$ is bounded above by the number of integer vectors
$(d_x)_{x\in X}$ such that $d_x\leq |E_x|$ and $\sum_{x\in X}d_x=n$; a
more algebraic explanation is that the association $A\mapsto X^A$
makes the age algebra into (essentially) a subalgebra of $K[X_\infty]$
(see~\cite{Pouzet_Thiery.AgeAlgebra2}). If the decomposition is
minimal then, according to Lemma~\ref {lemma.minimalGrowthRate}, the
growth rate of the profile is at least $n^{k-1}$. We now turn to the
proof that the Hilbert series is a rational fraction.

We define a total order on monomials in $\K[X]$ by comparing their
shapes w.r.t. the degree reverse lexicographic order and breaking ties
by the usual lexicographic order on monomials for some arbitrary fixed
order on $X$ (recall that the \emph{reverse lexicographic order}
$\leq_{\revlex}$ is defined as follow: for two integer sequences
$d:=(d_1,\dots,d_n)$ and $d':=(d'_1,\dots,d'_n)$ of same length,
$d<_{\revlex} d'$ if $d\ne d'$ and $d_i>d'_i$ where $i$ is the largest
$j\leq n$ such that $d_j\not = d'_j$; for example $(4, 1, 1)
<_{\revlex}(3,3,0)$; for the \emph{degree reverse lexicographic
  order}, one first compares the sum of the two sequences and then
break ties with reverse lexicographic order; for example
$(3,2,0)<_{\degrevlex}(4,1,1)$). Beware that this total order on
monomials is well founded but not a \emph{monomial order}.
We define the \emph{leading monomial} $\lm(\tau)$ of an
isomorphism type $\tau$ as the unique maximal monomial in the set
$\{X^{A} \suchthat \type A=\tau\}$.

To prove the theorem, we essentially endow the set of leading
monomials with a monomial ideal structure in some appropriate
polynomial ring.  The point is that the Hilbert series of such
monomial ideals are simple rational fractions (see e.g.~\cite[Chapter
9, \S 2]{Cox_al.IVA}; note that their presentation is in term of the
\emph{Hilbert function}, but this is equivalent).
\begin{proposition}
  \label{proposition.HilbertSeriesMonomialIdeal}
  Let $\K[x_1,\dots,x_n]$ be a polynomial ring whose variables have
  positive degree $d_i:=\deg(x_i)$, and let $\ideal$ be a monomial ideal.
  Then, the Hilbert series of $\ideal$ is of the form:
  \begin{displaymath}
    \frac{P(Z)}{(1-Z^{d_1})\cdots(1-Z^{d_n})}\,.
  \end{displaymath}
  where $P\in \Z[Z]$.
\end{proposition}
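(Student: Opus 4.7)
The plan is to prove this by induction on the number $n$ of variables, exploiting the fact that a monomial ideal decomposes naturally according to the degree in the last variable. For the base case $n=1$, any monomial ideal in $\K[x_1]$ is either $\{0\}$ or of the form $(x_1^k)$, with Hilbert series $0$ or $Z^{kd_1}/(1-Z^{d_1})$; both are of the desired form.

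For the induction step, I would decompose $\ideal \subseteq \K[x_1,\dots,x_n]$ according to the $x_n$-degree: every monomial writes uniquely as $m\,x_n^k$ with $m \in \K[x_1,\dots,x_{n-1}]$, so setting
\begin{equation*}
\ideal_k \;:=\; \{m \in \K[x_1,\dots,x_{n-1}] \suchthat m\,x_n^k \in \ideal\},
\end{equation*}
one obtains a decomposition $\ideal = \bigoplus_{k\geq 0} \ideal_k\,x_n^k$. A direct check shows that each $\ideal_k$ is itself a monomial ideal (now in one fewer variable) and that multiplication by $x_n$ embeds $\ideal_k$ into $\ideal_{k+1}$, yielding an ascending chain $\ideal_0 \subseteq \ideal_1 \subseteq \cdots$. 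By Noetherianity of $\K[x_1,\dots,x_{n-1}]$ (equivalently by Dickson's lemma applied to a finite generating set of $\ideal$), this chain stabilizes: $\ideal_k = \ideal_N$ for all $k \geq N$.

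Summing dimensions degreewise, this gives
\begin{equation*}
\hilbert_{\ideal}(Z) \;=\; \sum_{k=0}^{N-1} Z^{k d_n}\, \hilbert_{\ideal_k}(Z) \;+\; \frac{Z^{N d_n}}{1 - Z^{d_n}}\, \hilbert_{\ideal_N}(Z).
\end{equation*}
By the induction hypothesis, each $\hilbert_{\ideal_k}(Z) = P_k(Z)/\prod_{i=1}^{n-1}(1 - Z^{d_i})$ with $P_k \in \Z[Z]$. Putting everything over the common denominator $\prod_{i=1}^{n}(1-Z^{d_i})$ yields
\begin{equation*}
\hilbert_{\ideal}(Z) \;=\; \frac{(1-Z^{d_n})\sum_{k=0}^{N-1} Z^{k d_n} P_k(Z) \,+\, Z^{N d_n} P_N(Z)}{\prod_{i=1}^{n}(1-Z^{d_i})},
\end{equation*}
whose numerator is manifestly in $\Z[Z]$, completing the induction.

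I do not anticipate a real obstacle: the only substantive ingredient is the stabilization of $(\ideal_k)$, which is standard. An alternative—more explicit but no simpler—would run inclusion–exclusion over a finite minimal generating set $m_1,\dots,m_s$ of $\ideal$ (equivalently, use the Taylor resolution), which directly produces the closed form
\begin{equation*}
\hilbert_{\ideal}(Z) \;=\; \frac{1}{\prod_{i=1}^{n}(1-Z^{d_i})} \sum_{\emptyset \neq T \subseteq \{1,\dots,s\}} (-1)^{|T|+1}\, Z^{\deg \operatorname{lcm}(m_i \suchthat i \in T)},
\end{equation*}
in which the numerator is visibly a polynomial in $\Z[Z]$. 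For the stated goal—rationality of the form and integrality of $P$—the inductive argument is the most economical.
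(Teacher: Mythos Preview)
Your proof is correct. The primary argument you give---induction on the number of variables via the slicing $\ideal=\bigoplus_{k\geq 0}\ideal_k\,x_n^k$ and stabilization of the chain $(\ideal_k)_k$---is a genuinely different route from the paper's. The paper instead goes straight to the inclusion--exclusion formula you mention as an alternative: it observes that each principal monomial ideal has Hilbert series $Z^d/\prod_i(1-Z^{d_i})$, that intersections of principal monomial ideals are again principal, and then invokes Dickson's lemma to write $\ideal$ as a finite union of principal ideals and sum by inclusion--exclusion. So your ``alternative'' is in fact the paper's proof. The trade-off: the inclusion--exclusion argument is shorter and immediately yields an explicit numerator, while your inductive slicing is a bit longer but more self-contained (it needs only Noetherianity, not the observation about intersections of principal monomial ideals). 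Either is perfectly adequate for the stated goal.
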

We include the proof, as it is short and sheds some light for our
purpose.
\begin{proof}
  First, the Hilbert series of a principal ideal $\K[x_1,\dots,x_n].m$
  generated by a monomial $m$ of degree $d$ is
  \begin{displaymath}
    \frac{Z^d}{(1-Z^{d_1})\cdots(1-Z^{d_n})}\,.
  \end{displaymath}
  Furthermore, the intersection of two principal ideals is again
  principal. Take now any monomial ideal $\ideal$. By Dickson Lemma, it is finitely
  generated by monomials $m_1,\dots,m_r$. Therefore, the Hilbert series
  of $\ideal$ can be computed by inclusion-exclusion from that of the
  principal ideals
  $(\K[x_1,\dots,x_n].m_{i_1}\cap\dots\cap\K[x_1,\dots,x_n].m_{i_s})_{1\leq i_1<\dots<i_s\leq r}$.
\end{proof}

The key property of leading monomials of age algebras is reminiscent
of Stanley-Reisner rings.  To each set $S\subseteq X$, associate the
monomial $x_S := \prod_{i\in S} x_i$. By square free factorization,
any monomial $m\in\K[X]$ can be written in a unique way as a product
$m = x_{S_1}^{e_1} \dots x_{S_r}^{e_r}$ where $\emptyset \subset
S_1\subset\dots\subset S_r \subset X$ is a chain of non empty
subsets of $X$, and the $e_i$ are positive. Each $S_i$ is a
\emph{layer} of $m$, and $S_1\subset\dots\subset S_r$ is the \emph{chain support} of $m$.

\begin{lemma}\label{lemma.addlayer}
  Let $m$ be a leading monomial, and $S\subseteq X$ be a layer of $m$.
  Then, $m x_S$ is again a leading monomial unless $d_i=|E_i|$ for
  some $i$ in $S$.
\end{lemma}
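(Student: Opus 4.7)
My plan is to exhibit an explicit set $A'$ whose type $\tau'$ has $mx_S$ as its leading monomial.

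Fix a subset $A \subseteq E$ realising $\tau$ with $X^A = m$; such $A$ exists by the very definition of $\lm(\tau)$. Because $S$ is a layer of $m$, every index $i \in S$ appears in the square-free factorisation of $m$, so $d_i := d_i(A) \geq 1$; together with the hypothesis $d_i \neq |E_i|$ this strengthens to $d_i < |E_i|$, and for each $i \in S$ we may pick $b_i \in E_i \setminus A$. Setting $B := \{b_i : i \in S\}$ and $A' := A \cup B$, we have $X^{A'} = m x_S$ by construction. The remaining (and substantive) task is to verify that $X^{A'} = \lm(\tau')$, where $\tau' := \tau(A')$: for every $A''$ with $\tau(A'') = \tau'$, we need $X^{A''} \leq X^{A'}$ in the shape-then-lex order.

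Given such a competitor $A''$, I plan to extract from it a subset $C'' \subseteq A''$ of type $\tau$ together with a residue $B'' := A'' \setminus C''$, so that $X^{A''} = X^{C''} \cdot X^{B''} \leq m \cdot x_S = X^{A'}$. The construction of $B''$ exploits the layer hypothesis: writing $S = T_k = \{i : d_i \geq c_k\}$ for the corresponding exponent threshold $c_k$, the shape $\overline d(A')$ is obtained from $\lambda := \overline d(A)$ by incrementing exactly the top $|S|$ parts, giving a valid partition since $\lambda_{|S|} \geq c_k > \lambda_{|S|+1}$. Selecting $B'' \subseteq A''$ as a transversal of the $|S|$ blocks of $A''$ contributing the largest entries of $d(A'')$, Lemma~\ref{lemma.minimalGrowthRate}---which ensures that on sufficiently fat subsets the components of the restriction coincide with the restriction of the components of $R$---combined with a shape comparison between $A'$ and $A''$ forces $\tau(C'') = \tau$. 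Maximality of $m = \lm(\tau)$ then gives $X^{C''} \leq m$, and the analogous bound $X^{B''} \leq x_S$ on the transversal (forced by the layer structure and, in the tight case, by the lex tiebreak) yields the conclusion.

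The main obstacle is the verification that the stripped set $C''$ has type exactly $\tau$; this is precisely where the layer hypothesis is essential. Removing one element from each of the $|S|$ blocks contributing the top entries of $d(A'')$ peels off a full horizontal stripe of the Young diagram, so the remaining shape dominates $\lambda$; for a generic $S \subseteq X$ no such contiguous top-stripe would be delimited, and the whole extraction would break down. The argument thus combines the structural results on monomorphic decompositions from Section~\ref{section.monomorphic_decompositions} with the partition-theoretic behaviour of the shape-refined-by-lex order.
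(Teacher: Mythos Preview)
Your proposal has two genuine gaps.

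First, the appeal to Lemma~\ref{lemma.minimalGrowthRate} is misplaced. That lemma only guarantees that the minimal monomorphic decomposition of $R_{\restriction A''}$ is the one induced from $\minmorphdec(R)$ when $A''$ is $d$-fat for some fixed $d$. An arbitrary competitor $A''$ with $\type{A''}=\tau'$ need not be $d$-fat, and the paper explicitly warns (just after its Claim~2) that even for a set realising a leading monomial the induced decomposition may fail to be minimal. What is actually needed is the weaker but sharper statement that, for each $i\in S$, the block $E_i\cap A'$ is a \emph{component} of $R_{\restriction A'}$; the paper proves this directly from the leading-monomial hypothesis on $m=X^A$ (its Claim~2), not from fatness. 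Without this, your ``remove a transversal of the top $|S|$ blocks of $A''$'' construction has no reason to produce a set of type~$\tau$: you do not know that those blocks are monomorphic components of $R_{\restriction A''}$, so moving an element within one of them need not preserve isomorphism type.

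Second, even granting $\type{C''}=\tau$, the step ``$X^{C''}\le m$ and $X^{B''}\le x_S$, hence $X^{A''}=X^{C''}X^{B''}\le m\,x_S$'' is invalid: the paper stresses that the shape-then-lex order is \emph{not} a monomial order, so products do not respect it. The paper's proof avoids any multiplicative inequality; instead it shows that $A'$ and $A''$ have the \emph{same} shape (via Claims~1--4, transporting the component structure through an explicit isomorphism $\psi$), and then handles the lex tiebreak by a monotone-function argument on exponents. Your sketch does not supply any substitute for this, and the parenthetical ``in the tight case, by the lex tiebreak'' does not address the possibility that the shapes of $C''$ and $m$ differ while the shapes of $A''$ and $A'$ still need to be compared.

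A workable route close in spirit to yours is to take an isomorphism $\psi:A'\to A''$ and set $C'':=\psi(A)$, giving $\type{C''}=\tau$ for free; but then the real content is to show that $C''$ meets each of the top $|S|$ blocks of $A''$ in all but one point (the analogue of the paper's Claims~3 and~4), and finally to argue equality of shapes rather than a product inequality.
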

\begin{proof}
  \let\isomorphism=\psi
  Suppose that $d_i<|E_i|$ for every $i$ in $S$. Let $A,A',B,B'$ be subsets of $E$ such that $X^A = m$, $X^{B} = mx_{S}$,
  $X^{B'}$ is the leading monomial $\lm(\overline B)$, and $A'$ is any
  subset of $B'$ belonging to $\overline A$. Let $R_{\restriction A}$, $R_{\restriction B}$, $R_{\restriction A'}$ and
  $R_{\restriction B'}$ be the corresponding induced structures, and let  $\isomorphism$ be an isomorphism from $R_{\restriction B}$ to $R_{\restriction B'}$.

  Setting $e:= |X|$, write respectively $\overline d(A)=(\alpha_1,\dots,\alpha_e)$
  and $\overline d(B)=(\beta_1,\dots,\beta_e)$ the shapes of $A$ and $B$, and
  similarly for $A'$ and $B'$. Our first goal is to prove
  that the   shapes of $B$ and $B'$ are the same.

  {\bf Claim 1.} {\it $\alpha_p=\beta_{p}=\beta'_{p}=\alpha'_p$ for all $p> s$ where $s:=\vert S\vert$}.

 {\it Proof of Claim 1.}
   Using that $A$ and $A'$ have the same degree, and similarly for $B$
  and $B'$, we have:
  \begin{equation}
    (\alpha_1,\dots,\alpha_e) \geq_{\revlex} (\alpha'_1,\dots,\alpha'_e)
    \quad \text{and} \quad
    (\beta'_1,\dots,\beta'_e) \geq_{\revlex} (\beta_1,\dots,\beta_e)\,.
  \end{equation}
  Using that $X^B=X^A x_{S}$, we get:
  \begin{equation}
    (\beta_1,\dots,\beta_e) = (\alpha_1+1,\dots,\alpha_s+1,\alpha_{s+1},\dots,\alpha_e)\,.
  \end{equation}
  From the inclusion $A'\subset B'$, we deduce that:
  \begin{equation}
    (\alpha'_1,\dots,\alpha'_e) \geq_{\revlex} (\beta'_1,\dots,\beta'_e).
  \end{equation}
  Altogether, using that $\leq_\revlex$ is preserved on suffixes, we
  conclude that:
  \begin{equation}
    \begin{aligned}
      (\beta_{s+1},\dots,\beta_e)
      &= (\alpha_{s+1},\dots,\alpha_e)
      \geq_{\revlex} (\alpha'_{s+1},\dots,\alpha'_e)
      \geq_{\revlex} (\beta'_{s+1},\dots,\beta'_e)\\
      &\geq_{\revlex} (\beta_{s+1},\dots,\beta_e)\,,
    \end{aligned}
  \end{equation}
  and therefore all those suffixes coincide. \hfill \qed

  {\bf Claim 2.} {For each $i\in S$, $E_i\cap B$ is a component of
    $R_{\restriction B}$, that is a maximal monomorphic part of
    $R_{\restriction B}$.}

  {\it Proof of Claim 2.} Suppose not. Choose $i\in S$ such that
  $E_i\cap B$ is not a component of $R_{\restriction B}$
  and is of maximal cardinality with that property.
  Since $(E_k)_{k\in X}$
  is a monomorphic decomposition of $R$, then by
  Proposition~\ref{proposition.restriction_monomorphic_decomposition},
  $(E_k\cap B)_{k\in X}$ is a monomorphic decomposition of
  $R_{\restriction B}$. Hence, by
  Proposition~\ref{proposition.lattice_monomorphic_decomposition}, it
  refines the minimal monomorphic decomposition of $R_{\restriction
    B}$. Since $E_i\cap B$ is not a maximal monomorphic part, there is some index $j\in X$
  such that $(E_i\cup E_j)\cap B$ is still a monomorphic part of
  $R_{\restriction B}$. Due to our choice of $i$, we have $|E_i\cap
  B|\geq |E_j\cap B|$.  Note that this implies $|E_i\cap A|\geq
  |E_j\cap A|$. Let $a$ be the unique element of $E_i\cap (B\backslash
  A)$, pick $a'$ in $E_j\cap A$, and set
  $A'':=A\cup\{a\}\backslash\{a'\}$.  Since the elements $a$ and $a'$ belong to the same component of $R_{\restriction B}$, $A''\approx A$". We also
  have $\overline d(A)<_{\degrevlex} \overline d(A'')$, a contradiction with
  the fact that $X^A$ is a leading monomial.  \hfill \qed

  The proof of Claim~2 would have been simpler if the decomposition
  induced on $A$ by the $E_x$ were the components of $R_{\restriction
    A}$. However this is not true in general, even with the assumption
  that $X^A$ is a leading monomial. For a simple example, take for $R$
  the union of two non trivial cliques, and $A$ containing exactly one
  element in each clique; then $R_{\restriction A}$ has a single
  monomorphic component and not two.

  Let  $S':=\{i_1,\dots, i_s\}$ be $s$ distinct elements of $X$ such that $|E_{i_k}\cap B'|=\beta'_{k}$ for $k=1, \dots, s$. Set $U:=\bigcup_{i\not\in S} (E_i\cap B)$ and
  $U':=\bigcup_{i\not\in S'} (E_i\cap B')$.

  {\bf Claim 3.} {\it $U$ is the set of all $b\in B$ such that the
    $B\setminus \{b\}$ contains no member 
   of $\overline A$. The same statement holds for $U'$
    w.r.t. $B'$. In particular, $\isomorphism$ transforms $U$ into $U'$}.

  {\it Proof of Claim 3.} Let $b\in U'$. According to Claim 1,  the equality
  $(\alpha'_{s+1},\dots,\alpha'_e) = (\beta'_{s+1},\dots,\beta'_e)$
  holds for any $A'\subset B'$ such that $A\approx A'$. Hence,   no
  member of $\overline A$  is included in $B'\setminus \{b\}$. On the other hand, from
  the definition of $U$, $B\setminus \{b\}$ contains a member of 
  $\overline A$ for every element $b \in B\setminus U$. Since $|U|= \beta_{s+1}+\cdots+\beta_{e}=\beta'_{s+1}+\cdots+\beta'_{e}=|U'|$  and
  $\isomorphism$ is an isomorphism from $R_{\restriction B}$ to $R_{\restriction B'}$, the statement follows by
  cardinality count. \hfill \qed

  {\bf Claim 4.} {\it $\isomorphism$ transforms $(E_i\cap B, i\in S)$ into
    $(E_i\cap B', i\in S')$.}

  {\it Proof of Claim 4.} Let $P(B)$ be the minimal monomorphic
  decomposition of $B$. By Claim 2, it is of the form $\{E_i\cap B 
  \suchthat i\in S\}\cup \mathcal P$, where $\mathcal P$ is some partition of $U$.
  Then $\{\isomorphism(E_i\cap B)  \suchthat i\in S\}
  \cup \mathcal P'$,  where  $\mathcal P':=\{\isomorphism(C) \suchthat C\in \mathcal P\}$,  is the minimal monomorphic decomposition $P(B')$ of $B'$. Since   $(E_i\cap B')_{i\in X}$ is  a monomorphic
  decomposition of $B'$, this is a refinement of $P(B')$. Using
  that, from Claim 3,  $\isomorphism (B\setminus U)= B'\setminus U'$, we obtain that $(E_i\cap B')_{i\in S'}$ is a refinement of $(\isomorphism(E_i\cap B))_{ i\in S}$. Since these two decompositions have the same
  cardinality, they must coincide. The statement of Claim 4
  follows. \hfill \qed

  Claim 2 and Claim 4 imply immediately that $B$ and $B'$ have the
  same shape. Fix now $A':=\isomorphism(A)$. Using Claim 4, $A'$ is  obtained
  from $B'$ by removing exactly one element in each $E_{i}\cap B'$,
  $i\in S'$.
  Putting everything together, we have:
  \begin{itemize}
  \item $X^A$ and $X^{A'}$ have the same shape; similarly for $B$ and
    $B'$;
  \item $X^B=X^A x_S$, where $x_S$ is a layer of $X^A$; similarly
    $X^{B'}=X^{A'} x_{S'}$;
  \item $X^B\leq_\lex X^{B'}$ and $X^A\geq_\lex X^{A'}$.
  \end{itemize}
  Recall that, if two monomials have the same shape and at least one
  layer of size $s$, then lexicographic comparison is preserved upon
  changing the multiplicity of that layer (this is just applying a
  strictly monotone function to the exponents).
  Therefore, $X^A=X^{A'}$ and $X^B=X^{B'}$. In particular, $X^B=mx_S$
  is a leading monomial, as desired.
\end{proof}

\subsection{Final step}

  Fix a chain $C:=\emptyset\subset S_1\subset\dots\subset
  S_r\subseteq X$, and let $\lm_C$ be the set of leading monomials of
  the age algebra with this chain support.  The plan is essentially to
  realize $\lm_C$ as the linear basis of some monomial ideal of a
  polynomial ring, so that the generating series of $\lm_C$ is
  realized as an Hilbert series. Consider the polynomial ring
  $\K[S_1,\dots,S_l]$, with its embedding in $\K[X]$ by $S_j\mapsto
  x_{S_j}$. Let $\ideal$ be the subspace spanned by the monomials
  $m:=S_1^{r_1}\cdots S_l^{r_l}$ such that $d_i(m)>|E_i|$ for some $i$.
  It is obviously a monomial ideal. When all monomorphic blocks are
  infinite, $\ideal$ is the trivial ideal $\{0\}$. Consider the subspace
  $\K.\lm_C$ of $\K[S_1,\dots,S_l]$ spanned by the monomials in
  $\lm_C$.  Lemma~\ref{lemma.addlayer} exactly states that
  $\ideal[J]:=\K.\lm_C\oplus \ideal$ is in fact also a monomial ideal of
  $\K[S_1,\dots,S_l]$. Applying
  Proposition~\ref{proposition.HilbertSeriesMonomialIdeal}, the
  Hilbert series of $\ideal$ and $\ideal[J]$ are rational fractions of the form
  \begin{displaymath}
    \frac{P(Z)}{ (1-Z^{|S_1|}) \cdots (1-Z^{|S_l|}) }\ .
  \end{displaymath}
  Hence, the same hold for
  $\hilbert_{\K.\lm_C}=\hilbert_{\ideal[J]}-\hilbert_{\ideal}$.
  Furthermore, whenever $S_j$ contains $i$ with $|E_i|<\infty$, the
  denominator $(1-Z^{|S_l|})$ can be canceled out in
  $\hilbert_{\K.\lm_C}$. The remaining denominator divides $(1-Z)
  \cdots (1-Z^k)$.

  By summing up those Hilbert series $\hilbert_{\K.\lm_C}$ over all
  chains $C$ of subsets of $X$, we get the generating series of all
  the leading monomials, that is the Hilbert series of
  $\agealgebra(R)$. Hence, it is a rational fraction of the form
  \begin{displaymath}
    \frac{P(Z)}{ (1-Z) \cdots (1-Z^k) }\ .
  \end{displaymath}

  Recall that, if $f(z)$ is a rational fraction that is analytic at
  zero and has poles at points $\alpha_1,\dots,\alpha_m$, then there
  exists $m$ polynomials $(\pi_j(x))_{j=1}^m$ such that, for $n$ large
  enough,
  \begin{displaymath}
    f_n = [z^n] f(z) = \sum_{j=1}^m \pi_j(n)\alpha_j^{-n}\,;
  \end{displaymath}
  furthermore, the degree of $\pi_j$ is equal to the order of the pole
  of $f$ at $\alpha_j$ minus one (see
  e.g.~\cite[Theorem~IV.9]{Flajolet_Sedgewick.AnalysisOfAlgorithms}).

  Our fraction has a pole of order at most $k$ a $1$. The other poles
  are at roots of unity and are of order at most $k-1$.  Hence
  $\profile_R(n)$ is eventually a quasi-polynomial. Since its growth
  rate is bounded below by $n^{k-1}$
  (Lemma~\ref{lemma.minimalGrowthRate}) we deduce that the pole at $1$
  is of order $k$, that is $P(1)\ne0$. It follows that
  $\profile_R(n)=an^{k-1} + O(n^{k-2})$, for some $a\in \R^+$, as
  desired. \hfill \qed

\section{Proof of Theorem \ref{hereditary/ideals}}\label{section.lemma}
Let $\Omega_{\mu}$ be the class of finite relational structures with
signature $\mu$. The embeddability relation is a quasi order; once
isomorphic structures are identified, $\Omega_{\mu}$ is a
poset. Initial segments of this poset correspond to hereditary
classes. If $\mathfrak C$ is a hereditary class, members of
$\Omega_{\mu}\setminus \mathfrak C$ are \emph{obstructions} to
$\mathfrak C$. The minimal obstructions (minimal
w.r.t. embeddability) are the \emph{bounds} of $\mathfrak
C$. Clearly $\mathfrak C$ is determined by its bounds. Indeed, if
$\mathfrak B$ is a subset of $\Omega_{\mu}$, set $\uparrow \mathfrak
B:=\{S\in \Omega_{\mu}: B\leq S\; \text{for some} \; B\in \mathfrak
B\}$ and $Forb(\mathfrak B):=\Omega_{\mu}\setminus \uparrow \mathfrak
B$. Then $\mathfrak C=Forb(\mathfrak B)$ where $\mathfrak B$ is the
set of bounds of $\mathfrak C$. Hence, the fact that $\mathfrak C$ can
be defined by a finite number of obstructions amounts to the fact that
it has only finitely many bounds (considered up to isomorphy). An
\emph{ideal} of a poset is a non-empty and up directed initial
segment.  Clearly, the age of a relational structure is an ideal; the
converse holds provided that $\mu$ is finite \cite{Fraisse.1954}. By
extension, the bounds of a relational structure are the bounds of its
age.  The decomposition of a poset, or of an initial segment thereof,
into ideals is the backbone of the theory of ordered sets and the
proof of Theorem \ref{hereditary/ideals} starts with well known
properties of such decompositions.

In the sequel we suppose that $\mu$ is finite, despite that some of the results hold without this requirement. We consider two cases:

Case 1: $\mathfrak C$ contains no infinite antichain
(w.r.t. embedability). Then, $\mathfrak C$ is a finite union of ages.
This is a special case of a general result about posets of
Erd\"os-Tarski~\cite{Erdos_Tarski.1943}. The statement follows.

Case 2: $\mathfrak C$ contains an infinite antichain. Then it contains
an age which cannot be defined by finitely many obstructions and
contains no infinite antichain~\cite[3.9 p.~329]{Pouzet.RPE.1981}
(this fact is a special instance of a property of posets which is
similar to Nash-William's lemma on minimal bad
sequences~\cite{NashWilliams.1963}). With this in mind,
Theorem~\ref{hereditary/ideals} is a consequence of the following
lemma.
\begin{lemma}
  \label{lemma.age.polynomially_bounded}
  \emph{An age with polynomially bounded profile can be defined by
    finitely many obstructions}.
\end{lemma}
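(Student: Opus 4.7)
The plan is a two-step reduction to a combinatorial model, followed by a well-quasi-ordering argument. First, I would invoke the two structural facts already recalled in the paper: since the signature $\mu$ is finite and the profile of the age $\age$ is polynomially bounded, any relational structure $R$ with $\age(R)=\age$ has finite kernel (the consequence of Theorem~\ref{profilpouzet1} mentioned just after its statement), and an age with polynomially bounded profile and finite kernel is the age of some almost multichainable relational structure (the result recalled before Corollary~\ref{lem2}). I may therefore assume $R$ is almost multichainable with data $(F,V,L)$: $F \subseteq E$ finite, $V$ finite, $L$ an infinite linear order, and, for every local isomorphism $f$ of $L$, the map $\id_F \cup (1_V,f)$ is a local isomorphism of $R$.

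Second, I would parameterize $\age(R)$ combinatorially. Any finite substructure of $R$ is induced on $F' \cup \{a_{v,y} : (v,y)\in W\}$ for some $F' \subseteq F$ and some finite $W \subseteq V\times L$. Assign to it the enriched pair $(F',w)$, where $w$ is the word obtained by reading, row by row in $L$-order, the non-empty column-profile $\{v\in V : (v,y)\in W\} \in \Sigma := 2^V\setminus\{\emptyset\}$, and where one also records the induced structure on $F'$. By almost multichainability, the isomorphism type of the substructure is determined by $(F',w)$; hence $\age(R)$ identifies with a subset of the product of a finite set of $F$-types with $\Sigma^*$.

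Third, Higman's lemma asserts that $\Sigma^*$ is well-quasi-ordered under the subsequence order with letter-inclusion, and this order is compatible with embeddability between the corresponding substructures of $R$. Thus $\age(R)$ appears as a downward-closed subset of a WQO $Q$, and the complement $Q\setminus \age(R)$ has only finitely many minimal elements $q_1,\dots,q_m$. Each $q_i$ reads back as an explicit finite structure $S_i \in \Omega_\mu \setminus \age$ of bounded size.

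The main obstacle is the last translation: $\Omega_\mu$ itself is not WQO, so one cannot apply Higman's lemma to $\Omega_\mu \setminus \age$ directly. To close this gap, I would show that a putative bound $S$ of $\age$ of sufficiently large size admits a well-defined encoding $(F'(S),w(S))\in Q$ reconstructed from the encodings of its proper substructures, which all lie in $\age(R)$; if this encoding lies in $\age(R)$ then $S \in \age(R)$, a contradiction, and otherwise $(F'(S),w(S))$ dominates some $q_i$, forcing $S$ to contain an obstruction $S_i$ strictly smaller than $S$ and violating minimality. Bounds of $\age$ thus have size bounded by a function of $|F|$, $|V|$, and $|\mu|$, and the finiteness of $\mu$ allows only finitely many structures of bounded size up to isomorphism, as required.
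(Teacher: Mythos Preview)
Your reduction to an almost multichainable $R$ and the word encoding over $\Sigma=2^V\setminus\{\emptyset\}$ are in line with the paper. From there, however, the paper proceeds differently: Higman's theorem on words is used to show that the age of an almost multichainable structure is \emph{very beautiful} (for every $k$, the members of $\age(R)$ expanded by $k$ arbitrary unary relations still form a class with no infinite antichain), and then a result of Pouzet~\cite{Pouzet.1972} is invoked: a very beautiful age has only finitely many bounds. The passage through ``very beautiful'' is doing real work that your direct argument does not supply.

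There is a genuine gap in your last two paragraphs. First, since $L$ is infinite, \emph{every} pair $(F',w)\in Q$ is realized by some finite restriction of $R$; hence $Q$ surjects onto $\age(R)$ rather than embedding it, the putative complement $Q\setminus\age(R)$ is empty, and there are no $q_i$ to read back. More seriously, a bound $S$ lives in $\Omega_\mu\setminus\age(R)$: it is not a restriction of $R$ and carries no intrinsic $F$-part or column structure. You propose to reconstruct $(F'(S),w(S))$ from the encodings of the $S\setminus\{s\}$, but those encodings are not canonical (many pairs in $Q$ realize the same isomorphism type) and nothing forces them to be mutually compatible, let alone to glue to a single pair. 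Even granting a gluing, neither branch of your dichotomy is justified: that the resulting pair lies in $Q$ only says some $\tau\in\age(R)$ shares that encoding, not that $S\cong\tau$; and domination in $Q$ yields an embedding between restrictions of $R$, not an embedding into $S$. The ``very beautiful'' device is exactly what closes this gap: the added unary relations let one carry, inside structures that genuinely live in an age, the positional information needed to compare bounds. Without it (or an equivalent argument) your reconstruction step restates the difficulty rather than resolving it.
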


Lemma~\ref{lemma.age.polynomially_bounded} is one of the many
properties of relational structures with polynomially bounded profile.
These properties were stated in the thesis of the first author
\cite{Pouzet.TR.1978}; some have been published in
\cite{Pouzet.RPE.1981}. They are presented in the survey
\cite{Pouzet.2006.SurveyProfile} with a complete treatment of the case
of binary structures. For the reader's convenience, the remainder of
this section contains a sketch of the proof of
Lemma~\ref{lemma.age.polynomially_bounded}, in two steps:
\begin{lemma}[{\cite[Theorem~2.12]{Pouzet.2006.SurveyProfile}\footnote{Beware
      that it is stated there that $R$ itself is almost multichainable which
      might actually be an overstatement.}}]
  \label{lemma.age_polynomially_bounded_multichainable}
  If the profile of a relational structure $R$ is bounded by a
  polynomial then there is some $R'$ with the same age which is almost
  multichainable.
\end{lemma}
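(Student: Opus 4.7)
The plan is to build $R'$ as a well-chosen restriction of $R$ obtained by peeling off the kernel and then pruning each remaining "1-type class" to an infinite chainable subset via repeated applications of Ramsey's theorem.

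\medskip

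\textbf{Step 1: Reduce to the kernel.} Since $\profile_R$ is polynomially bounded, the kernel $F:=K(R)$ is finite (this is part of Theorem~\ref{profilpouzet1} and the results cited there). By Lemma~\ref{lem:restrictkernel}, deleting any finite set disjoint from $F$ preserves both the age and the kernel. So we may work inside $E\setminus F$, with $F$ kept fixed, and aim for a restriction containing $F$ whose enumeration of $E\setminus F$ takes the desired form $V\times L$.

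\textbf{Step 2: Classify elements of $E\setminus F$ by their 1-type over $F$.} Since $F$ is finite and the signature $\mu$ is finite, the isomorphism type of $R_{\restriction F\cup\{a\}}$ (as a structure over $F$) takes only finitely many values as $a$ ranges over $E\setminus F$. Color $E\setminus F$ by this 1-type. Because $E$ is infinite and the number of colors is finite, at least one color class is infinite; in fact we keep \emph{all} infinite color classes, indexed by the finite set $V$, and discard the finitely many elements belonging to finite color classes (another application of Lemma~\ref{lem:restrictkernel}).

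\textbf{Step 3: Use Ramsey to find a common chainable enumeration.} This is the main obstacle. I would extract, simultaneously for all $v\in V$, infinite subsets $L_v\subseteq E_v$ and a linear order on the common index set $L$ such that, with $(a_{v,\ell})_{(v,\ell)\in V\times L}$ enumerating $\bigcup_v L_v$, every local isomorphism of $L$ lifts (via $(1_V,f)$ extended by the identity on $F$) to a local isomorphism of the restriction $R':=R_{\restriction F\cup \bigcup_v L_v}$. Concretely, one iterates Lemma~\ref{chainablerestriction} in a Ramsey-style diagonal argument: for each finite $F'\subseteq F$, each finite $I'\subseteq I$, and each finite tuple $(v_1,\ldots,v_r)\in V^r$, partition the increasing $r$-tuples in $\prod L_v$ by the isomorphism type of the induced structure on $F'\cup\{a_{v_i,\ell_i}\}$ and thin out using finite Ramsey to trivialize this coloring; a standard interleaving of these reductions over countably many data yields a common infinite set on which the coloring is constant for every choice of $F',I',r$ and tuple of colors. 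The compactness Lemma~\ref{lemma:compactness} then packages all these finite-chainability witnesses into $F$-chainability of the lifted enumeration, which is exactly the required almost-multichainability.

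\textbf{Step 4: Check that $\age(R')=\age(R)$.} Any finite substructure $S$ of $R$ has base $A=(A\cap F)\cup (A\setminus F)$; the points of $A\setminus F$ can be sorted by their 1-types over $A\cap F$, hence are realized in the corresponding $E_v$'s, and since the $L_v$ are infinite and the Ramsey extraction preserved isomorphism types of finite configurations over $F$, a copy of $S$ can be found inside $R'$. Together with Step~1 this gives $\age(R')=\age(R)$, completing the proof. The delicate point throughout is Step~3: keeping the Ramsey-stabilized properties compatible across the different colors $v\in V$ so that a \emph{single} linear order $L$ serves all of them simultaneously.
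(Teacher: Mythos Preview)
Your approach has a genuine gap already at Step~2, and it makes Step~4 fail. The index set $V$ for almost multichainability cannot in general be read off from $1$-types over the kernel.

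Take the graph $R=K_\omega\oplus K_\omega$ of Example~\ref{example.symmetricPolynomials} with $k=2$ (two disjoint infinite cliques). Its profile is $\profile_R(n)\sim n/2$, hence polynomially bounded. Its kernel is empty: removing any vertex leaves the age unchanged. With $F=\emptyset$, every single vertex has the same $1$-type over $F$, so your Step~2 yields $|V|=1$. Your Ramsey extraction in Step~3 then selects an infinite subset $L$ on which $R$ is chainable; such an $L$ must lie entirely inside one of the two cliques. The resulting $R'=R_{\restriction L}$ has the age of a single infinite clique, not of $R$, and Step~4 collapses. The correct almost-multichainable presentation of this $R$ requires $|V|=2$, one slot per clique; nothing in the $1$-types over $F$ detects this. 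More generally, the size of $V$ is governed by the monomorphic dimension (equivalently, by the height of the age modulo $\omega$), which is invisible to your Step~2.

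The paper's route is accordingly much more indirect. It introduces the ordinal \emph{height} $h(\mathcal A)$ of an age and derives the lemma from two facts: Fact~1, that $\profile_R$ has polynomial growth of degree $k$ iff $\omega\cdot(k+1)\le h(\mathcal A(R))<\omega\cdot(k+2)$; and Fact~2, that any age of height below $\omega^2$ is the age of an almost multichainable structure. Fact~2 is proved by induction on the height: from the inductive hypothesis every proper sub-age is very beautiful, which (Fact~4) forces the kernel $F=K(R)$ to be finite; one then passes to an age-inexhaustible auxiliary structure $M$ on $E\setminus F$ whose local isomorphisms are those of $R_{\restriction E\setminus F}$ that extend by the identity on $F$, and applies Fact~3 to realise $\mathcal A(M)$ as the age of a multichainable structure. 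The finite set $V$ emerges from this induction, not from a single Ramsey pass over $1$-types.
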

\begin{lemma}[{\cite[Theorem~4.20]{Pouzet.2006.SurveyProfile}}]
  \label{lemma.almost_multichainable_age}
  The age of an almost multichainable structure can be defined by
  finitely many obstructions.
\end{lemma}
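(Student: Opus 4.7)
The plan is to exploit the strong structural constraints of almost multichainable structures to bound the size of minimal obstructions, from which finiteness is immediate since the signature $\mu$ is finite.

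Let $R$ be almost multichainable with parameters $F$ finite, $V$ finite, and $L$ linearly ordered, so that each element of $E \setminus F$ is indexed by $V \times L$ and every local isomorphism of $L$, acting trivially on $V$ and $F$, lifts to a local isomorphism of $R$. First I would observe that $\age(R)$ is completely determined by a finite amount of local data. Indeed, by the multichainability property, for any $F' \subseteq F$ and any finite $A \subseteq V \times L$, the isomorphism type of $R_{\restriction F' \cup A}$ depends only on $F'$ together with the word $\pi(A) \in V^*$ obtained by listing the $V$-coordinates of $A$ in the order induced by $L$. Hence $\age(R)$ can be read off from a surjection $\mathcal{P}(F) \times V^* \twoheadrightarrow \age(R)$, and in particular the embeddability class of a finite substructure is controlled by a pair (subset of $F$, word in $V^*$).

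The heart of the argument is a Ramsey-type bound on the size of a minimal obstruction. Let $m$ be the maximum arity occurring in the finite signature $\mu$, and let $B \in \Omega_\mu \setminus \age(R)$ be a minimal obstruction; so $B \setminus \{b\}$ embeds in $R$ for every $b \in B$, while $B$ itself does not. If $|B|$ exceeds a threshold $N = N(|F|, |V|, m)$, Ramsey's theorem provides a large subset $B_0 \subseteq B$ whose elements share the same relational type with respect to every fixed tuple from $B \setminus B_0$ of size less than $m$. Fix $b_0 \in B_0$ and an embedding $\sigma : B \setminus \{b_0\} \hookrightarrow R$. By pigeonhole (taking $|B_0|$ larger than $|V|$), among the images of $B_0 \setminus \{b_0\}$ one finds two elements $\sigma(b_1), \sigma(b_2)$ sharing the same $V$-coordinate and falling in a common monomorphic $V$-fiber; using the multichainability of $R$, a local isomorphism of $L$ makes room along this fiber to accommodate one extra element of the required type. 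Combining this with the indiscernibility of $B_0$ produced by Ramsey, one extends $\sigma$ to an embedding of $B$ into $R$, contradicting the assumption that $B$ is an obstruction. Hence $|B| \leq N$.

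Since the signature $\mu$ is finite, the number of isomorphism types of $\mu$-structures on at most $N$ points is finite, so there are only finitely many minimal obstructions, and the age of $R$ is defined by finitely many bounds. The main obstacle is the Ramsey/gluing step: one must show that the pairwise indiscernibility furnished by Ramsey's theorem is strong enough for multichainability of $R$ to genuinely incorporate an additional element into an existing partial embedding, respecting all $\mu$-relations simultaneously. This uses in an essential way both the finiteness of $V$ (to force coincidence of $V$-coordinates among large homogeneous subsets) and the property that local isomorphisms of $L$ lift through $R$ (to relocate images along the chain).
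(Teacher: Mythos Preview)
Your approach differs from the paper's and has a genuine gap precisely at the step you yourself flag as the ``main obstacle''. The paper proceeds via well-quasi-orders: by Higman's theorem on words, the age of an almost multichainable structure is \emph{very beautiful} (it contains no infinite antichain even after expansion by finitely many unary predicates), and a general result of Pouzet (1972) then yields that any very beautiful age has only finitely many bounds. This works cleanly because a finite substructure is encoded by a subset of $F$ together with a word over the finite alphabet of nonempty subsets of $V$ (recording, for each used $L$-level in order, which $V$-coordinates appear), and Higman's lemma handles the subword order on such words.

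Your Ramsey-plus-extension scheme breaks at two places. First, the Ramsey step does not deliver indiscernibility of $B_0$ over $B\setminus B_0$: the number of relational types of an element over $B\setminus B_0$ grows with $|B\setminus B_0|\approx |B|$, so there is no fixed finite colouring to which Ramsey applies uniformly; one only obtains homogeneity for tuples drawn \emph{from} $B_0$, which says nothing about how elements of $B_0$ relate to the rest of $B$. Second, even granting some form of indiscernibility and two images $\sigma(b_1),\sigma(b_2)$ on a common $V$-fibre, the gluing step is unsupported. To extend $\sigma$ you must pick an $L$-coordinate $l_0$ for $b_0$ so that every relation involving $b_0$ --- with elements of $\sigma(B\setminus B_0)$ \emph{and} with the other $\sigma(b_i)$ --- is correct simultaneously; multichainability lets you slide points along $L$ while preserving order type, but it does not manufacture the specific order position $b_0$ would need relative to the entire configuration, and the relations internal to $B_0$ are not controlled by indiscernibility over the complement. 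This global coherence is exactly what the Higman/WQO argument packages, and it is not clear it can be recovered from pointwise Ramsey plus pigeonhole.
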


Lemma~\ref{lemma.almost_multichainable_age} relies on the following
notion and result which are are exposed in Fraïssé's book (see
Chapter~13 p.~354, \cite{Fraisse.TR.2000}).
A class $\mathfrak C$ of finite structures is \emph{very beautiful} if
for every integer $k$, the collection $\mathfrak C (k)$ of structures
$(S, U_1, \dots, U_k)$, where $S\in \mathfrak C$ and $U_1, \dots, U_k$
are unary relations with the same domain as $S$, has no infinite
antichain w.r.t. embeddability. A straightforward consequence of
Higman's theorem on words (see
\cite{Higman.1952}) %
is that \emph{the age of an almost multichainable structure is very
  beautiful}. We conclude using that \emph{a very beautiful age can be
  defined by finitely many
  obstructions}~\cite{Pouzet.1972}.%

The proof of Lemma~\ref{lemma.age_polynomially_bounded_multichainable}
uses the notions of kernel and of height.
The \emph{height} of an age $\mathcal A$ is an ordinal, denoted by $h(\mathcal A)$ and defined by induction as follows: $h(\mathcal A)=0$ if $\mathcal A$ has no proper sub-age; that is $\mathcal A$ is the age of the empty relational structure. Otherwise $h(\mathcal A):=\sup\{h(\mathcal A')+1: \mathcal A' \; \text{is a proper sub-age of}\;  \mathcal A\}$. Clearly, the height is defined if and only if there is no strictly descending infinite sequence of sub-ages of $\mathcal A$.  With this definition, it is easy to check that an age $\mathcal A$ has height $n$ with $n\in \N$ if and only if $\mathcal A$ is the age of a relational structure on $n$ elements. Denote by $\omega$   the first infinite ordinal, set  $\omega.2:=\omega+\omega$, $\omega^2:= \omega+\omega+\cdots$. For example, we have $h(\mathcal A)<\omega.2$ if and only if $\mathcal A$ is the age of an almost chainable relational structure \cite{Pouzet_Sobrani.2001}.

Lemma~\ref{lemma.age_polynomially_bounded_multichainable} follows from
the next two facts:

\noindent
\textbf{Fact 1} (\cite[Theorem~4.30]{Pouzet.2006.SurveyProfile}) %
The profile $\profile_R$ grows as a polynomial of degree $k$ if and
only if $\omega.(k+1)\leq h(\mathcal A(R))<\omega.(k+2)$.

\noindent
\textbf{Fact 2} (\cite[Theorem~4.24]{Pouzet.2006.SurveyProfile})%
If $h(\mathcal A)<\omega^2$ then $\mathcal A$ is the age of an almost
multichainable structure.

We need two more facts before sketching the proofs of Facts 1 and 2:

\noindent
\textbf{Fact 3}. If $\mathcal A$ is inexhaustible then either $\mathcal A$ is the age of a multichainable structure and $h(\mathcal A)<\omega^2$ or $\mathcal A$ contains the age $\mathcal A_k$ of a multichainable structure  with $h(\mathcal A_k)\geq \omega.(k+1)$ for every integer $k$. 

\noindent
\textbf{Fact 4}. Let $R$ be a relational structure; if every sub-age of $\mathcal A(R)$  is very beautiful then either $h(A)\geq \omega^2$ or $K(R)$ is finite. 

Fact 3 is  easy (see  Proposition 4.23 of \cite{Pouzet.2006.SurveyProfile}). Fact 4 is deeper. It is in  \cite{Pouzet.TR.1978}. It is not known if the first part of the conclusion of Fact 4 can be dropped. It can if $R$ is made of binary structures (see Theorem 4.24 of \cite{Pouzet.2006.SurveyProfile}). 

The proof of Fact 2 is by induction on $\alpha:=h(\mathcal A)$, with
$\alpha= \omega.(k+1)+p$. From the induction hypothesis, every proper
sub-age is the age of an almost multichainable structure; thus is very
beautiful. By Fact 4, the kernel of any relational structure $R$ with
age $\mathcal A$ is finite. Write $F:=K(R)$ and $E':=E\setminus K(R)$.
Let $M$ be a relational structure with base $E'$ whose local
isomorphisms are those of $R_{\restriction E'}$ which can be extended
by the identity on $F$ to local isomorphisms of $R$. Then, $M$ is
age-inexhaustible and $h(\mathcal A(M))=\beta$ with
$\beta:=\omega.(k+1)$. By Fact 3, $\mathcal A(M)$ is the age of a
multichainable structure. Thus, $\mathcal A(R)$ is the age of an
almost multichainable structure, as desired.

For the proof  of Fact 1,  we suppose first that $R$ is 
 almost multichainable.  As in the proof of Fact 2, we replace $R$ by
 $M$ and we prove that $\varphi_M$ is
 bounded by a polynomial of degree $k$ if and only if $h(\mathcal A
 (M))=\omega.(k+1)$.  For that, we prove first that if  $h(\mathcal
 A(M))=\omega.(k+1)$ then  $\varphi_{M}(n)\leq {n+k\choose k}$  for
 every integer $n$ (Lemma 4.27 of \cite{Pouzet.2006.SurveyProfile}).
 Next,  in a similar way as in the proof of Theorem 2.17, we prove
 that $\varphi_{M}$ is bounded from below by a polynomial of degree
 $k$. Since $\varphi_R$ is bounded by a polynomial of the same degree
 as $\varphi_{M}$, we get the equivalence stated in Fact 1 when $R$ is
 almost multichainable. Now, according to Fact 2, if  $h(\mathcal
 A(R))<\omega.(k+2)$ there is some $R'$ with the same age as $R$
 which is  almost multichainable, and thus $\varphi_R=\varphi_{R'}$ is
 bounded by a polynomial of degree $k$. Conversely, suppose that
 $\varphi_R$ grows as a polynomial of degree $k$. We claim that
 $\mathcal A(R)$ has an height. Indeed, otherwise $\mathcal A(R)$
 contains an infinite antichain. Hence, as we saw in Case 2,   it
 contains an age $\mathcal A'$ with no infinite antichain which cannot
 be defined by finitely many obstructions. This age  has an height,
 say $\alpha'$. If $\alpha'<\omega^2$ then, by Fact 2, $\mathcal A'$
 is the age of  an almost multichainable structure, and thus can be defined by finitely many obstructions, which is impossible.  Thus $\alpha \geq\omega^2$. But then by Fact 3, $\mathcal A'$  contains ages of height $\omega. (k'+1)$ for every $k'$ and thus $\varphi_R$ grows faster than every polynomial. A contradiction. This proves our claim. Now, let $\alpha=h(\mathcal A(R))$. By the same token, $\alpha<\omega^2$, hence $\mathcal A(R)$ is the age of  an almost multichainable structure. And thus $\omega.(k+1)\leq \alpha<\omega.(k+2)$, as desired.

\appendix

\section{Examples of relational structures and age algebras}

\label{appendix.examples}

\subsection{Examples coming from graphs and digraphs}

A graph $G:=(V, \mathcal E)$ being considered as a binary irreflexive
and symmetric relation, its profile $\profile_G$ is the function which
counts, for each integer $n$, the number $\profile_G(n)$ of induced
subgraphs on $n$ elements subsets of $V(G)$, isomorphic subgraphs
counting for one.  Graphs with profile bounded by a polynomial have
been described  in~\cite{Pouzet.2006.SurveyProfile,Balogh_Bollobas_Saks_Sos.2009}. According to
\cite{Balogh_Bollobas_Saks_Sos.2009} the profile is either a quasi-polynomial or is
eventually bounded below by the partition function $\wp$, where
$\wp(n)$ is the number of integer partitions of $n$. Tournaments with polynomially  bounded profile have been characterized in~\cite{Boudabbous.Pouzet.2009} as lexicographical sums of acyclic tournaments indexed by a finite tournament. They admit finite monomorphic decomposition, hence their profile is a quasi-polynomial. 

\begin{example}
  \label{example.coclique}
  Trivially, if $G$ is an infinite clique $K_\infty$ or coclique
  $\overline K_\infty$, then there is a single isomorphic type for
  each $d$, hence  $\profile_G(n)=1$. There is one monomorphic component $E_x=E$, the
  age algebra is $\K[x]$, and its Hilbert series is $\frac{1}{1-Z}$.
  If instead one consider digraphs, we recover the same age and
  age algebra from any infinite chain $\N, \Z, \Q, \R$, etc. or
  antichain.
\end{example}

A bit less trivial is the fact that $\profile_G$ is bounded if and
only if $G$ is \emph{almost constant} in the sense of
Fraïssé~\cite{Fraisse.TR.2000} (there exists a finite subset $F_G$
of vertices such that two pairs of vertices having the same
intersection on $F_G$ are both edges or both non-edges).

\begin{example}
  \label{example.symmetricPolynomials}
  Let $G$ be the direct sum $K_\omega\oplus \dots \oplus K_\omega$ of
  $k$ infinite cliques (or chains) $E_1,\dots,E_k$ (see figure in
  Table~\ref{table.overview}). The $E_i$ form the monomorphic components.
  The profile counts the number $\profile_G(n) = p_k(n) \simeq
  \frac{n^k} {(k+1)!k!}$ of integer partitions with at most $k$ parts.
  The age algebra is the ring of symmetric polynomials $\sym(X)$ on
  $k$ variables whose Hilbert series is
  $\frac{1}{(1-Z)\cdots(1-Z^k)}$.%
\end{example}
\begin{examples}
  \label{example.CliquePlusIndependent}
  Let $G$ be the direct sum $K_\omega\oplus \overline K_\omega$ of an
  infinite clique and an infinite independent set. Then, $\profile_G(n)=n$ for
  $n\geq 1$, and
  $\hilbert_G=1+\frac {Z}{(1-Z)^2}=
  \frac{1-Z+Z^2}{(1-Z)^2}=\frac{1+Z^3}{(1-Z)(1-Z^2)}$.
  Hence, the Hilbert series has one representation as a rational
  fraction with a numerator with some negative coefficient, and
  another with all coefficients non-negative.

  This Hilbert series coincides further with that of
  Examples~\ref{example.notfinitelygenerated}, and~\ref{example.qsym}
  for $k=2$. Still, in the first and third case, there are two
  infinite monomorphic components whereas in the second there are three: one finite
  and two infinite. Furthermore, the age algebra is finitely generated
  and even Cohen-Macaulay in the first (take the free subalgebra
  generated by a point and a $2$-chain, and take as module generators
  the empty set and a $3$-chain) and third case, but not in the
  second.
\end{examples}

\begin{example}
  \label{example.notfinitelygenerated}%
  Let $G$ be the direct sum $K_{(1, \omega)}\oplus \overline K_\omega$
  of an infinite wheel and an infinite independent set. There are two
  infinite monomorphic components, $E_1$ the set of leaves of the wheel
  and $E_2$ the independent set, and one finite, $E_3$, containing the
  center $c$ of the wheel. Each isomorphism type consists of a wheel
  and an independent set, so the Hilbert series is
  $\hilbert_G(Z)=(1+\frac{Z^2}{1-Z})\frac{1}{1-Z}=\frac{1-Z+Z^2}{(1-Z)^2}=\frac{1+Z^3}{(1-Z)(1-Z^2)}$.

  What makes this relational structure special is that the monomorphic
  decomposition $(E_1,E_2,E_3)$ is minimal, whereas $(E_1,E_2)$ is
  \emph{not} a minimal monomorphic decomposition of the restriction of
  $R$ to $E_1\cup E_2$. We now prove that this causes the age algebra
  not to be finitely generated.
  Consider the subalgebra $\mathcal B:=\K[e_1(E)]$. In each degree
  $d$, it is spanned by the sum $b_d$ of all subsets of size $d$ of
  $E$. Key fact: any element $s$ of $\agealgebra(R)$ can be uniquely
  written as $s =: a(s) + b(s)$ where $b(s)$ is in $\mathcal B$, and
  all subsets in the support of $a(s)$ contain the unique element $c$
  of $E_3$. Note in particular that $a(s)^2=0$ for any $s$ homogeneous
  of positive degree.
  Let $S$ be a finite generating set of the age algebra made of
  homogeneous elements of positive degree. By the remark above,
  $\{a(s), s\in S\}$ generates $\agealgebra(R)$ as a $\mathcal B$-module.
  It follows that the graded dimension of $\agealgebra(R)$ is bounded by
  $|S|$, a contradiction.
\end{example}

\begin{example}
  \label{example.symmetricFunctions}
  When extending Example~\ref{example.symmetricPolynomials} to infinitely many
  cliques the age algebra becomes the ring $\sym$ of symmetric
  functions whose Hilbert series is
    $\hilbert_R(Z) = \prod_{d\geq 1} \frac{1}{(1-Z^d)}$.%
\end{example}
\begin{example}
  \label{example.path}
  If $G$ is an infinite path, then the finite restrictions are direct
  sums of paths. Therefore, the profile counts the number of integer
  partitions of $n$. The age algebra is the free commutative algebra
  generated by the paths of length $1,2,\dots$, which is again
  isomorphic to $\sym$.  However, this time, the monomorphic
  components are reduced to singletons.
\end{example}

\begin{example}
  \label{example.rado}
  If $G$ is the Rado graph, then $\profile_G(n)$ counts the total
  number of unlabelled graphs. The age algebra is the free commutative
  algebra generated by the connected graphs. Its Hilbert series
  is %
  $\hilbert_R(Z) = \prod_{d\geq 1} \frac{1}{(1-Z^d)^{c_d}}$, where
  $c_d$ is the number of connected graphs with $d$ vertices.
\end{example}

\begin{example}
  \label{example.tournamentC3WreathN}
  Let $G$ be the lexicographic sum tournament obtained by substituting
  each point $i$ of the cycle $C_3:= \{(1,2), (2,3), (3,1)\}$ on
  $\{1,2,3\}$ by the chain $\N$. The three chains $(E_1,E_2,E_3)$ give
  the minimal monomorphic decomposition of $G$, but this decomposition
  is not recursively minimal and the age algebra is not finitely
  generated because, as above, $(E_1,E_2)$ is not the minimal
  monomorphic decomposition of $R$ restricted to $E_1\cup E_2$
  (see~\cite{Pouzet_Thiery.AgeAlgebra2}).
\end{example}

\begin{example}
  \label{example.QwreathG}
  Take the direct sum $\overline K_\infty \wr G$ of infinitely many
  copies of a finite connected graph $G$. The age algebra is the free
  commutative algebra generated by the (finitely many) connected
  induced subgraphs of $G$.
  Taking for $G$ the graph $K_{1,1}$, one gets the infinite
  matching. The age algebra is finitely generated, whereas there are
  infinitely many monomorphic blocks.
  The extension of this example to $G$ a finite relational structure
  is straightforward.
\end{example}

\begin{examples}
  \label{example.notFinitelyGeneratedInfiniteDecomposition}
  Let $G$ be the simple graph consisting of the direct sum of an
  infinite wheel and an infinite matching. Each isomorphism type is
  the direct sum of a wheel, an independent set, and a
  matching. Therefore, the Hilbert series is
  $\hilbert_G(Z)=\frac{Z}{1-Z}\frac{1}{1-Z}\frac{1}{1-Z^2}$, and the
  profile has polynomial growth: $\profile_G(n) \sim a n^2$. There
  is one infinite monomorphic block (the leaves of the wheel), and
  infinitely many finite ones (the center of the wheel, and the edges
  of the matching). The age algebra is not finitely generated because
  $G$ contains as restriction the graph of
  Example~\ref{example.notfinitelygenerated} whose age algebra is not
  finitely generated.
\end{examples}

\subsection{Examples coming from groups} \label {subsection:groups}

We first look at orbital profiles. The fact that they are special  cases of profiles is easy to prove. In fact, 
for every permutation group $G$ on a set $E$, there is a relational
structure $R$ on $E$ such that $\aut
R = \overline{G}$ (the topological closure of $G$ in the symmetric
group $\mathfrak{G}(E)$, equipped with the topology induced by the
product topology on $E^E$, $E$ being equipped with the discrete
topology). In particular, $\theta_G(n)=\profile_R(n)$ for all $n$. %
Oligomorphic groups are quite common.  Indeed, \emph{if $E$ is
  denumerable, then $G$ is oligomorphic if and only if the complete
  theory of $R$ is
  $\aleph_0$-categorical}~\cite{Ryll-Nardzewski.1959}.
Cameron conjectured that the orbital profile $\theta_G$ is polynomial
(in the sense that $\theta_G(n)\sim a n^k$) provided that it is
bounded by some polynomial. This particular consequence of
Conjecture~\ref{conjecture.quasipolynomial} has not been solved yet.

\begin{example}
  Let $G$ be the trivial group on an $m$ element set $E$. Set $R:=
  (E, u_1,\dots,u_{m})$, where each $u_i$ is a unary
  relation defining the $i$-th element of $E$. Then, $\theta_G(n)=
  \profile_R (n)= {m \choose n}$.
\end{example}

\begin{example}
  Let $G$ be the symmetric group $\sg_N$, acting on the set of all
  pairs $\{i,j\}$ of $\{1,\dots,N\}$ by $\sigma(\{i,j\}) =
  \{\sigma(i),\sigma(j)\}$. Then, the orbits of $G$ are the unlabelled
  graphs on $N$ vertices, counted by number of edges. In the age
  algebra, the product of two graphs is the sum (with multiplicities)
  of all graphs that can be obtained by superposing them without
  overlapping edges.
\end{example}

\begin{example}
  Let $\age$ be a finite alphabet with $k$ elements, and let
  $\age^*$ be the set of words over $\age$. Then each word
  can be viewed as a finite chain coloured by $k$ colors. Hence
  $\age^*$ is the age of the relational structure $R$ made of
  the chain $\Q$ of rational numbers divided into $k$ colors in such a
  way that, between two distinct rational numbers, all colors
  appear. Furthermore, $R$ is homogeneous in the sense that every
  local isomorphism of $R$ with finite domain extends to an
  automorphism of $R$, hence the set of orbits of $G: =\aut(R)$ can be
  identified to $\age(R)$. As pointed out by Cameron
  \cite{cameron.1997}, the age algebra $\agealgebra(R)$ is isomorphic
  to the shuffle algebra over $\age$, an important object in
  algebraic combinatorics (see \cite{Lothaire.1997}). A more
  sophisticated example of shuffle algebra is presented in
  Subsection~\ref{gerritzen}.
\end{example}

\begin{example}
  Let $G := \aut\,\Q$, where $\Q=(\Q,\leq)$ is the chain of rational
  numbers. Then, $\theta_G(n) = \profile_{\Q}(n)= 1$ for all $n$.
  There is a single monomorphic block, and $\agealgebra(R)\approx\K[x]$.
\end{example}
\begin{example}
  \label{example.polynomials}
  Let $R:=(\Q, \leq, u_1,\dots,u_k)$, where $\Q$ is the chain of
  rational numbers, and $u_1,\dots,u_k$ are $k$ unary relations which
  divide $Q$ into $k$ non-trivial intervals $E_1,\dots,E_k$. Then, $\profile_R(n)
  = {n+k-1 \choose k-1}$ and $\hilbert_R=\frac{1}{(1-Z)^k}$.  The
  $E_i$'s are the monomorphic blocks and $\agealgebra(R)\approx\K[X]$.
\end{example}

\begin{example}
  \label{example.permgroup}
  Let $G'$ be the wreath product $G':=G\wr \sg_\N$ of a permutation
  group $G$ acting on $\{1,\ldots, k\}$ and of $\sg_\N$, the symmetric
  group on $\N$. Looking at $G'$ as a permutation group acting on
  $E':=\{1,\ldots, k\}\times \N $, then $G'=\aut R'$ for some
  relational structure $R'$ on $E'$; moreover, for all $n$,
  $\theta_{G'}(n)=\profile_{R'}(n)$. Among the possible $R'$ take
  $R\wr \N:=(E', \equiv, (\overline \rho_i)_{i\in I})$, where $\equiv$
  is $\{((i, n),(j,m))\in E'^{2}\suchthat i=j\} $, $\overline \rho_i:=\{
  ((x_1, m_1),\dots,(x_{n_i}, m_{n_i}))\suchthat (x_1,\dots, x_{n_i})\in
  \rho_i, ( m_1, \dots, m_{n_i})\in \N^{n_i} \}$, and $R:=(\{1,\dots,
  k\}, (\rho_i)_{i\in I})$ is a relational structure having signature
  $\mu:= (n_i)_{ i\in I}$ such that $\aut R= G$.  The relational
  structure $R\wr \N$ decomposes into $k$ monomorphic blocks,
  namely the equivalence classes of $\equiv$.
  
  As it turns out~\cite{Cameron.1990}, $\hilbert_{R\wr \N}$ is the
  Hilbert series %
  of the \emph{invariant ring} $\K[X]^G$ of $G$, that is the subring
  of the polynomials in $X$ which are invariant under the action of
  $G$. In fact, the identification of the age algebra as a subring of
  $\K[X]$ gives an isomorphism with $\K[X]^G$. As it is well known,
  this ring is Cohen-Macaulay, and the Hilbert series is a rational
  fraction of the form given in Theorem~\ref{theorem.quasipolynomial},
  where the coefficients of $P(Z)$ are non-negative.

  When $G$ is the trivial group, one recovers the polynomial ring
  $\K[X]$, as in Example~\ref{example.polynomials}.
\end{example}

\begin{problem}
  Find an example of a permutation group $G'$
  with no finite orbit, such that the orbital profile of $G'$ has
  polynomial growth, but the generating series is not the Hilbert
  series of the invariant ring $\K[X]^G$ of some permutation group $G$
  acting on a finite set $X$.
\end{problem}

\subsection{Examples coming from permutation groupoids}

Let $X$ be a set. A \emph{local bijection} of $X$ is a bijective
function $f$ whose domain $\dom f$ and image $\im f$ are subsets of
$X$. %
The inverse $f^{-1}$ of a local bijection $f$, its restriction
$f_{\restriction X'}$ to a subset $X'$ of $\dom f$ (with codomain
$f(X')$), and the composition $f\circ g$ of two local bijections $f$
and $g$ such that $\im g=\dom f$ are defined in the natural way.  A
set $G$ of local bijections of $X$ is called a \emph{permutation
  groupoid} if it contains the identity and is stable by restriction,
inverse, and composition. It is furthermore \emph{locally closed} if a local
bijection $f$ is in $G$ whenever all its finite restrictions
are. Obviously, the closure $\isection G$ of a permutation group $G$
by restriction is a permutation groupoid. More interestingly, the
local isomorphisms of a relational structure form a locally closed
permutation groupoid, and reciprocally, \emph{any locally closed
  permutation groupoid $G$ can be obtained from a suitable relational
  structure $R_G$ on $X$}.

The wreath product construction of an age algebra matching the
invariant ring $\K[X]^G$ of a permutation group $G$ (see
Example~\ref{example.permgroup}) can be extended straightforwardly to
\emph{permutation groupoids}. Many, but not all, properties of
invariant rings of permutation groups carry over (see
Table~\ref{table.overview}
and~\cite{Pouzet_Thiery.AgeAlgebra.2005,Pouzet_Thiery.AgeAlgebra2});
in particular, the invariant ring is still a module over symmetric
functions, but not necessarily Cohen-Macaulay.

\begin{examples}
  \label{example.qsym}
  Take $n\in\N\cup \{\infty\}$ and let $G$ be the permutation groupoid
  of the strictly increasing local bijections of $\{1,\dots, n\}$, or
  equivalently of the local isomorphisms of the chain
  $1<\cdots<n$. Then, $\K[X]^G$ is the ring $\qsym(X)$ of
  \emph{quasi-symmetric polynomials} on the ordered alphabet $X$, as
  introduced by I.~Gessel~\cite{Gessel.QSym.1984}. As shown by
  F.~Bergeron and C.~Reutenauer, $\hilbert_{\qsym(X)}=\frac
  {P_n(Z)}{(1-Z)(1-Z^2)\cdots(1-Z^n)}$, where the coefficients of
  $P_n(Z)$ are non negative. In fact, the ring is
  Cohen-Macalay~\cite{Garsia_Wallach.2003}.

  Taking the same groupoid $G$, and letting it act naturally on
  respectively pairs, couples, $k$-subsets, or $k$-tuples of elements
  of $\{1,\dots,n\}$, yield respectively the (un)oriented (hyper)graph
  quasi-symmetric polynomials of~\cite{Novelli_Thibon_Thiery.2004}.

  The $r$-quasi symmetric polynomials~\cite{Hivert.RQSym.2004} can be
  realized as well as the age algebra of a relational structure (but
  not as the invariant ring of a permutation groupoid). Namely start
  from the relational structure of
  Example~\ref{example.symmetricPolynomials}, and add another $2r$-ary
  relation $\rho$ such that $\rho(x_1,\dots,x_r,y_1,\dots,y_r)$ holds
  if $x_1,\dots,x_r$ are distinct and in some block $E_i$ and
  $y_1,\dots,y_r$ are distinct and in some block $E_j$ with $i<j$. For
  $r=1$, one recovers the relational structure giving quasi symmetric
  functions and for $r=0$ 
  the relational structure giving symmetric polynomials.
\end{examples}

\begin{example}
  \label{example.nonCM.groupoid}
  Let $G$ be the permutation groupoid on $\{1,2,3\}$ generated by the
  local bijection $1\mapsto 2$. Then, $G$ is the restriction of the
  finite permutation group $\langle (1,2), (3,4)\rangle$ whose
  invariant ring is Cohen-Macaulay. However, the age algebra $\K[X]^G$
  itself is not Cohen-Macaulay.
  In fact, the numerator of the Hilbert series cannot be chosen with
  non-negative coefficients. Indeed, $\hilbert_{\K[X]^G} =
  \frac{1-Z+2Z^2-Z^3}{(1-Z)^3}$, and the coefficient of highest degree
  in the product of the numerator by
  $\frac{(1-Z^{n_1})(1-Z^{n_2})(1-Z^{n_3})}{(1-Z)^3}$ is always $-1$.
\end{example}

\subsection{Example: the shuffle algebra of planar tree polynomials}
\label{gerritzen}

As a final example, and in order to illustrate the limits of
monomorphic decompositions, we consider the \emph{shuffle algebra of
  planar tree polynomials} $(\K\{x\}_\infty, \shuffle)$ which arises
naturally in the study of non associative analogues of the exponential
and
logarithm~\cite{Drensky_Gerritzen.2004,Gerritzen.2005.PlanarShuffleProduct}. We
realize $(\K\{x\}_\infty, \shuffle)$ as an age algebra, and show that
the minimal monomorphic decomposition of the underlying relational
structure is trivial, and in particular infinite.

In this section, all trees are rooted, ordered, and unlabelled (in the
papers cited above, those trees are called \emph{planar}). A tree is
\emph{reduced} if all its internal nodes are of arity at least
two. Let $\PRT_i$ be the set of all reduced trees with $i$ leaves and set
$\PRT:=\bigcup_{d=0}^\infty \PRT_d$.
By convention, $\PRT_0$ contains the \emph{empty reduced tree} with
  zero leaves.  Denoting leaves by ``$\leaf$'' and subtrees using
parentheses, one has:
\begin{displaymath}
  \PRT_1=\{\ \leaf\ \}, \qquad
  \PRT_2=\{\ (\leaf,\leaf)\ \}, \qquad
  \PRT_3=\{\ (\leaf,(\leaf,\leaf)), \  (\leaf,\leaf,\leaf), \ ((\leaf,\leaf),\leaf)\ \}\,.
\end{displaymath}
Those trees are counted by the sequence of Schröder numbers or super
Catalan numbers (\#A001003 of~\cite{OEIS}):
\begin{displaymath}
  1, 1, 1, 3, 11, 45, 197, 903, 4279, 20793, 103049, ...
\end{displaymath}
To each tree $\tree$ can be associated a canonical reduced tree
$\reduced(\tree)$ by contracting all paths in $\tree$ to suppress
intermediate nodes of arity $1$.
Given a subset $A$ of the leaves of $\tree$, one defines the
\emph{contraction of $\tree$ on $A$ as
  $\tree_{\restriction A}:=\reduced(\tree')$}, where $\tree'$ is the subtree
induced by $\tree$ on the set of all nodes of $\tree$ between the root
and the leaves in $A$.
\begin{lemma}
  \label{lemma.3reconstruction.reduced_tree}
  Let $\tree$ and $\tree'$ be two reduced trees with $d$ leaves. Then,
  $\tree=\tree'$ if and only if the contractions $\tree_{\restriction A}$ and
  $\tree'_{\restriction A}$ are identical for any $3$-subset $A$ of the leaves.
\end{lemma}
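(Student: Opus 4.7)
The ``only if'' direction is immediate, so the plan is to establish the converse by induction on the number $d$ of leaves. The base cases $d\leq 3$ are trivial: for $d\leq 2$ there are no $3$-subsets to check, and for $d=3$ the whole leaf set is itself a $3$-subset whose contraction reproduces the reduced tree. For $d\geq 4$, the strategy is to first recover the \emph{top-level partition} of the leaves (the partition according to which child of the root they descend from) using only the $3$-contractions, and then apply the induction hypothesis to each top-level subtree.

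The crucial observation is that, in a planar tree, the leaves inherit a linear order, and for three leaves $a<b<c$ the three possible $3$-contractions, namely $(\leaf,\leaf,\leaf)$, $((\leaf,\leaf),\leaf)$, and $(\leaf,(\leaf,\leaf))$, record precisely which pair (if any) has a lowest common ancestor strictly below the LCA of all three. The ``outside'' pair $\{a,c\}$ can never have this property, because the smallest subtree containing $a$ and $c$ must, by planarity, also contain $b$, so $\mathrm{LCA}(a,c)=\mathrm{LCA}(a,b,c)$. From this I would derive the following criterion: two leaves $x<y$ lie in the same top-level block of $\tree$ if and only if some $z>y$ yields $\tree_{\restriction\{x,y,z\}}=((\leaf,\leaf),\leaf)$ or some $z<x$ yields $\tree_{\restriction\{z,x,y\}}=(\leaf,(\leaf,\leaf))$. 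Indeed, the existence of such a $z$ is exactly the condition that $\mathrm{LCA}(x,y)$ lies strictly below the root of $\tree$, and any such $z$ is forced to lie outside the range $[x,y]$ by planarity. Since a reduced tree with $d\geq 2$ leaves has a root of arity at least $2$, whenever $x$ and $y$ lie in the same top-level block such a witness $z$ can always be found in another block.

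Because this criterion depends only on the $3$-contractions, it yields the same top-level partition $L_1,\ldots,L_k$ for $\tree$ and for $\tree'$. The second step is to observe that for any triple $\{x,y,z\}\subseteq L_i$, the contraction $\tree_{\restriction\{x,y,z\}}$ coincides with the $3$-contraction, on that triple, of the subtree $\tree_i$ hanging off the root-child corresponding to $L_i$: in the induced subtree of $\tree$ on $\{x,y,z\}$ the root of $\tree$ is arity $1$ and is removed by reduction. Since the root has arity at least $2$ we have $|L_i|<d$ for every $i$, so the induction hypothesis applies to $\tree_i$ and $\tree'_i$ and forces $\tree_i=\tree'_i$. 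Combining this with the identification of the top-level partitions yields $\tree=\tree'$.

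I expect the main subtlety to be the clean statement and verification of the top-level-block criterion, particularly the planarity argument that rules out the pairing $\{a,c\}$ and the guaranteed existence of a witness $z$ outside the block; once this bookkeeping is in place, the recursion on the subtrees $\tree_i$ is routine.
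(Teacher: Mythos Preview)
Your proof is correct, but it takes a genuinely different route from the paper's. The paper avoids induction entirely: it observes that a reduced tree is determined by its collection of \emph{node intervals} $[i,j]$ (the interval of leaves lying below each internal node), and then gives a one-shot criterion for an interval $[i,j]$ to be a node interval purely in terms of $3$-contractions --- namely, $\tau_{\restriction\{k,i,j\}}=(\leaf,(\leaf,\leaf))$ for every $k<i$ and $\tau_{\restriction\{i,j,k\}}=((\leaf,\leaf),\leaf)$ for every $k>j$. Since the family of node intervals determines the tree, this reconstructs $\tau$ all at once.

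Your argument instead peels the tree off level by level: you use $3$-contractions to recover the partition of the leaves according to the children of the root, and then recurse into each child subtree. The two approaches are closely related --- your ``same top-level block'' criterion is essentially the existential, root-level counterpart of the paper's universal node-interval criterion --- but yours requires carrying the induction and checking that $3$-contractions within a block $L_i$ agree with those of the subtree $\tau_i$, whereas the paper's argument is a short direct verification once the node-interval notion is in hand. On the other hand, your recursive viewpoint is natural if one thinks of trees as recursive objects, and it makes the role of planarity (forcing the witness $z$ outside $[x,y]$) very explicit.
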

\begin{proof}
  The ``only if'' statement is obvious, and we turn to the ``if''
  statement. For simplicity we denote the leaves
  $\{1,\dots,d\}$. Consider an internal node of $\tree$; since $\tree$
  is reduced, this node is uniquely caracterized by the interval
  $[i,j]$ formed by the leaves under it. We call $[i,j]$ a \emph{node
    interval}. Note that a reduced tree is uniquely caracterized by
  the collection of its node invervals. Note further that an interval
  $[i,j]$ is a node interval if and only if:
  \begin{itemize}
  \item for $k<i$, the restriction $\tree_{\restriction\{k,i,j\}}$ is the tree $(\leaf,(\leaf,\leaf))$, and
  \item for $k>j$, the restriction $\tree_{\restriction\{i,j,k\}}$ is the tree $((\leaf,\leaf),\leaf)$.
  \end{itemize}
  This concludes the proof.
\end{proof}

We now construct a relational structure whose isomorphism types are
given by the reduced trees. Consider the infinite rooted ordered tree
$T$ such that each internal node has infinitely many children, each
alternatively a leaf or a copy of $T$ (see
Figure~\ref{figure.infinite_planar_tree}). Let $E$ be the set of
leaves of $T$. To each finite subset $A$ of $E$, we associate the
reduced tree $T_{\restriction A}$ obtained by contracting $T$ on $A$
as in the finite case (see the example in
Figure~\ref{figure.infinite_planar_tree}).
\begin{figure}[h]
  \centering
  \includegraphics[width=\textwidth]{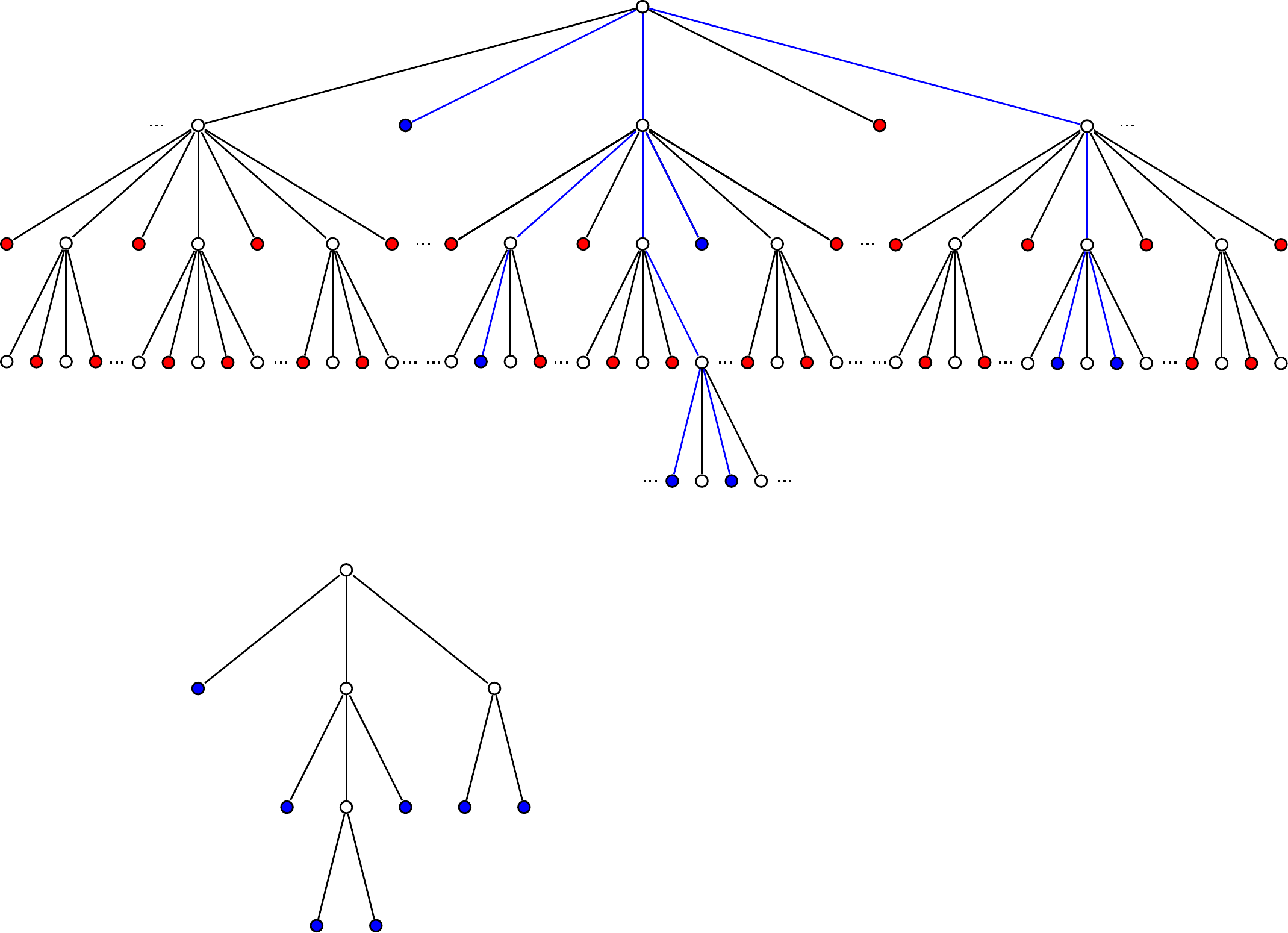}
  \caption{Above: the infinite tree $T$; in white, its internal nodes;
    in red and blue: its leaves. Below: the reduced tree
    $T_{\restriction A}$,
    where $A$ is the set of blue leaves in $T$.}
  \label{figure.infinite_planar_tree}
\end{figure}

Let $R$ be the relational structure obtained by endowing $E$ with:
\begin{itemize}
\item the total order $<$ induced by left-right infix order on $T$;
\item for each of the three reduced trees $\tree$ in $\PRT_3$, the
  ternary relation
  \begin{displaymath}
    \rho_\tree := \{ (x,y,z) \suchthat x<y<z \text{ and } T_{\restriction\{x,y,z\}} = \tree \}\,.
  \end{displaymath}
\end{itemize}
Note that the relational structure $(E,<)$ is isomorphic to the chain
of rationals.

\begin{proposition}
  The profile of $R$ counts the reduced trees. Namely, $A\approx A'$
  if and only if $T_{\restriction A}=T_{\restriction A'}$, and any
  reduced tree $\tree\in \PRT$ arises this way.
\end{proposition}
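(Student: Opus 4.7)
The proposition splits naturally into two claims: the characterization $A \approx A' \Leftrightarrow T_{\restriction A} = T_{\restriction A'}$, and the surjectivity of the map $A \mapsto T_{\restriction A}$ onto $\PRT$. I would prove both by exploiting Lemma~\ref{lemma.3reconstruction.reduced_tree} and the recursive structure of $T$, after first establishing a preliminary ``transitivity of contraction'' lemma: for any finite subsets $B \subseteq A$ of $E$,
\[
  (T_{\restriction A})_{\restriction B} \;=\; T_{\restriction B},
\]
which follows directly from the definitions since the two procedures collapse the same set of arity-one internal nodes of $T$.

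To prove the equivalence, the crucial point is that the only bijection compatible with $<$ between two finite subsets of $E$ is the order-preserving one, so any putative isomorphism $f \colon R_{\restriction A} \to R_{\restriction A'}$ is forced to send the $i$-th element of $A$ to the $i$-th element of $A'$. For the ``if'' direction I would take $f$ to be this order-preserving bijection and check that it preserves each of the three ternary relations $\rho_\sigma$ (with $\sigma \in \PRT_3$): for any triple $\{x<y<z\}\subseteq A$, the transitivity lemma together with the hypothesis $T_{\restriction A} = T_{\restriction A'}$ yields
\[
  T_{\restriction\{x,y,z\}} \;=\; (T_{\restriction A})_{\restriction\{x,y,z\}} \;=\; (T_{\restriction A'})_{\restriction\{f(x),f(y),f(z)\}} \;=\; T_{\restriction\{f(x),f(y),f(z)\}},
\]
as required. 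For the ``only if'' direction, any isomorphism $f$ preserves the $\rho_\sigma$, hence all three-leaf contractions agree under $f$, and Lemma~\ref{lemma.3reconstruction.reduced_tree} then forces $T_{\restriction A} = T_{\restriction A'}$ (applied to the two reduced trees with leaves identified via $f$).

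For the surjectivity I would proceed by induction on the number $d$ of leaves of $\tree$. The cases $d \leq 2$ are trivial. For $d \geq 3$, decompose the reduced tree as $\tree = (\tree_1, \dots, \tree_k)$ with $k \geq 2$, exploit that the root of $T$ has infinitely many children which are themselves copies of $T$, and pick $k$ such copies $T^{(1)}, \dots, T^{(k)}$ in left-to-right order. By the induction hypothesis each $\tree_i$ is realized by some $A_i$ in the leaves of $T^{(i)}$ (identified with $T$), and setting $A := A_1 \sqcup\cdots\sqcup A_k$ I would verify that $T_{\restriction A}$ has a root with exactly $k$ children, equal to $\tree_1, \dots, \tree_k$ respectively, so that $T_{\restriction A} = \tree$.

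The main obstacle, modest but not vacuous, is the bookkeeping in this last step: one must check that the reduction collapses the path from the root of $T$ down into each $T^{(i)}$ to a single edge (so the $k$ branches remain distinct and unmerged) and that the leaf-children of the root of $T$ interleaving with the $T^{(i)}$ do not contribute to $A$ and hence to the contraction. Both points are immediate from the alternating leaf/copy-of-$T$ structure of $T$ and from $A_i \subseteq T^{(i)}$ being non-empty. Once the two halves are in place, the map $\overline A \mapsto T_{\restriction A}$ is a well-defined bijection between orbits of finite subsets of $E$ and reduced trees, establishing the desired profile count.
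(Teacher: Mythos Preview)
Your proposal is correct and follows essentially the same route as the paper. The paper's proof is terser: it invokes the ``compatibility of contraction with $<$'' (your transitivity lemma) to reduce the equivalence to matching all $3$-leaf contractions, then applies Lemma~\ref{lemma.3reconstruction.reduced_tree}; for surjectivity it simply says to pick any of the ``straightforward embeddings'' of $\tree$ in $T$, whereas you spell out the recursive construction explicitly.
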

\begin{proof}
  We need only to consider the case where $A$ and $A'$ are of the same
  size $d$. Write $A=\{x_1,\dots,x_d\}$ and $A'=\{x'_1,\dots,x'_d\}$
  along the total order $<$. Take $\{i,j,k\}$ a $3$-subset of leaves
  of $T_{\restriction A}$ (and of $T_{\restriction A'}$). Thanks to the
  compatibility of contraction with $<$, one has
  $(T_{\restriction A})_{\restriction\{i,j,k\}}=T_{\restriction\{x_i,x_j,x_k\}}$
  and similarly for $A'$.  Therefore $A$ and $A'$ are isomorphic if
  and only if $T_{\restriction A}$ and $T_{\restriction A'}$ have the
  same $3$-leaf contractions. We conclude by reconstruction using
  Lemma~\ref{lemma.3reconstruction.reduced_tree}.

  For the last statement, choose any of the straightforward embeddings
  of $\tree$ in $T$.
\end{proof}

\begin{proposition}
  The minimal monomorphic decomposition of $R$ is trivial: each of its
  monomorphic component is a singleton.
\end{proposition}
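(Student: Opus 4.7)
The plan is to prove that, for any two distinct leaves $a,b\in E$, the set $\{a,b\}$ is not a monomorphic part of $R$. By Axiom~\ref{axioms.good}(b), this forces $a$ and $b$ to lie in distinct monomorphic components, so each component must be a singleton. According to condition (iii) of Lemma~\ref{remark.definition.monomorphic_set}, it suffices to exhibit a finite set $C\subseteq E\setminus\{a,b\}$ with $\{a\}\cup C\not\approx \{b\}\cup C$; by the proposition just proved, this reduces to finding $C$ such that $T_{\restriction\{a\}\cup C}\neq T_{\restriction\{b\}\cup C}$ as reduced trees.

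I will take $|C|=2$, tailoring the choice to the relative position of $a$ and $b$ in $T$. Let $v$ be the deepest common ancestor of $a$ and $b$ in $T$, and let $u_a,u_b$ be the two distinct children of $v$ on the paths down to $a$ and $b$ respectively, with $u_a$ to the left of $u_b$ in the infix order. Note that the chain of ancestors of $v$ inside $T$ is unary in any contraction whose selected leaves all lie below $v$, so under $\reduced$ one may argue as if $v$ were the root. In the \emph{generic case}, where some child of $v$ lies strictly between $u_a$ and $u_b$, the alternation of leaf and $T$-copy children yields a $T$-subtree $T_{\mathrm{mid}}$ strictly between them; pick $c_1,c_2$ to be two leaf-siblings at the root of $T_{\mathrm{mid}}$, and set $C=\{c_1,c_2\}$. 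A direct computation via $\reduced$ then gives $T_{\restriction\{a\}\cup C}=(\leaf,(\leaf,\leaf))$ but $T_{\restriction\{b\}\cup C}=((\leaf,\leaf),\leaf)$, which are distinct.

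The \emph{edge case} is when $u_a$ and $u_b$ are consecutive children of $v$, forcing by alternation that one of them is a leaf and the other a $T$-copy. Say $u_a=a$ is a leaf, the symmetric case being similar. Take $c_1,c_2$ to be two leaf-siblings at some internal node of $u_b$ lying strictly to the left of $b$ in the order; this is possible since the leaves of $u_b\cong T$ form a dense order without minimum. A similar contraction computation again yields $T_{\restriction\{a\}\cup C}=(\leaf,(\leaf,\leaf))$ and $T_{\restriction\{b\}\cup C}=((\leaf,\leaf),\leaf)$. I expect the main (though modest) obstacle to be the careful bookkeeping of the $\reduced$ operation, in particular the collapse of the unary chains above $v$ and along the selected paths, together with verifying the enumeration of edge cases forced by the leaf/$T$-copy alternation; once this is dispatched, the claim is immediate from the arbitrariness of $a$ and $b$.
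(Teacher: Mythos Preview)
Your approach is essentially the same as the paper's: reduce to showing that no two-element subset $\{a,b\}$ is a monomorphic part by exhibiting two leaves $c_1,c_2$ strictly between $a$ and $b$ whose deepest common ancestor is neither an ancestor of $a$ nor of $b$, so that $T_{\restriction\{a,c_1,c_2\}}=(\leaf,(\leaf,\leaf))$ while $T_{\restriction\{b,c_1,c_2\}}=((\leaf,\leaf),\leaf)$.

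There is, however, a small gap in your case analysis. In your ``generic case'' you assert that if some child of $v$ lies strictly between $u_a$ and $u_b$, then alternation yields a $T$-copy $T_{\mathrm{mid}}$ strictly between them. This fails when \emph{exactly one} child $\ell$ lies strictly between $u_a$ and $u_b$ and $\ell$ is a leaf: then the neighbours of $\ell$ among the children of $v$ are precisely $u_a$ and $u_b$, and there is no $T$-copy child of $v$ between them. In this situation both $u_a$ and $u_b$ are themselves $T$-copies (so neither equals $a$ or $b$), and you must descend into one of them to locate $T_{\mathrm{mid}}$, exactly as you do in your edge case. The paper sidesteps this bookkeeping entirely by making the single blanket claim that between any two leaves $a<b$ of $T$ there lies a full copy of $T$; once this is accepted, no case distinction is needed. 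Your argument is easily repaired by either proving this claim directly (e.g., the immediate sibling of $b$ on the opposite side from $a$ at the parent of $b$ is a $T$-copy lying strictly between $a$ and $b$) or by adding the missing sub-case.
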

\begin{proof}
  Since a
  subset of a monomorphic part is a monomorphic part (see
  Lemma~\ref{lemma.monomorphic_parts_are_good}), it is sufficient to
  prove that there is no two-element monomorphic part. Take $a<b$ in
  $E$. In between $a$ and $b$ in $T$ there is a full-blown copy of
  $T$.  Thus, we can take two leaves $c,d$ of $T$ with $a<c<d<b$ such
  that $c,d$ have a common ancestor which is neither an ancestor of
  $a$ nor of $b$. Then,
  \begin{displaymath}
    T_{\restriction\{a,c,d\}}\ =\ (\leaf,(\leaf,\leaf))\ \ne\ 
    ((\leaf,\leaf),\leaf)\ =\ T_{\restriction\{b,c,d\}}\,.
  \end{displaymath}
  Therefore $\{a,b\}$ is not a monomorphic part.
\end{proof}
\begin{proposition}
  \label{proposition.planar_shuffle_algebra}
  The age algebra of $R$ is isomorphic to the shuffle algebra of
  planar tree polynomials $(\K\{x\}_\infty, \shuffle)$.
\end{proposition}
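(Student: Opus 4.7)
The strategy is to exhibit a natural linear isomorphism between the two graded vector spaces via their combinatorial bases, and then verify that the structure constants for multiplication on each side are the same.

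First, by the previous proposition, the map $A \mapsto T_{\restriction A}$ induces a bijection between $\age(R)$ and $\PRT$. Hence the orbit sums $o_\tree := \sum_{A \in [E]^{<\omega}\ :\ T_{\restriction A} = \tree} A$ form a graded linear basis of $\agealgebra(R)$ indexed by the reduced planar trees. The space $\K\{x\}_\infty$ of planar tree polynomials of~\cite{Gerritzen.2005.PlanarShuffleProduct,Drensky_Gerritzen.2004} has, by definition, the same basis of reduced planar trees, graded by the number of leaves. We define the candidate isomorphism $\Phi : \agealgebra(R) \to \K\{x\}_\infty$ as the linear map sending $o_\tree$ to $\tree$.

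Second, we identify the structure constants. By definition of the product in the age algebra, $o_{\tree_1}\, o_{\tree_2} = \sum_\tree c_{\tree_1,\tree_2}^\tree\, o_\tree$, where, fixing any representative $A$ of type $\tree$,
\[
c_{\tree_1,\tree_2}^\tree \ =\ \#\bigl\{(A_1,A_2)\ :\ A = A_1 \uplus A_2,\ T_{\restriction A_1} = \tree_1,\ T_{\restriction A_2} = \tree_2\bigr\}.
\]
The key combinatorial ingredient is the \emph{transitivity of contraction}: for any $A_1 \subseteq A \subseteq E$, one has $(T_{\restriction A})_{\restriction A_1} = T_{\restriction A_1}$. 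This follows directly from the definition of $\reduced$, since contracting paths of arity one and then further contracting is the same as contracting once. Consequently, $c_{\tree_1,\tree_2}^\tree$ is intrinsic to $\tree$ (independent of the representative $A$) and equals the number of ordered bipartitions of the leaves of $\tree$ whose induced contractions yield $\tree_1$ and $\tree_2$, respectively.

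Third, this is exactly the combinatorial definition of the planar shuffle product $\tree_1 \shuffle \tree_2$ given in~\cite{Gerritzen.2005.PlanarShuffleProduct}: the coefficient of a tree $\tree$ with $|\tree_1| + |\tree_2|$ leaves is the number of ways to label its leaves by two colours so that the contractions onto each colour class recover $\tree_1$ and $\tree_2$. It follows that $\Phi$ intertwines the products, and since it is a graded linear bijection, it is an isomorphism of graded algebras. The main obstacle is purely bookkeeping: one must match the convention of~\cite{Drensky_Gerritzen.2004,Gerritzen.2005.PlanarShuffleProduct} with ours (in particular, the compatibility between the left-right infix order on $E$ induced by $T$ and the leaf order in planar trees) in order to confirm that the shuffle coefficients are indeed given by the bipartition count described above.
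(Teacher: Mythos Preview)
Your proof is correct and follows essentially the same approach as the paper's: both identify the orbit-sum basis with reduced planar trees and then check that the structure constants $c_{\tree_1,\tree_2}^{\tree}$ of the age algebra coincide with those of the planar shuffle product by counting bipartitions $A=A_1\uplus A_2$ with prescribed contractions. Your version is more explicit (spelling out the linear map $\Phi$, the transitivity of contraction, and the convention-matching), whereas the paper's proof is terse and simply points to~\cite[Section~3]{Gerritzen.2005.PlanarShuffleProduct} for the matching of structure constants.
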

\begin{proof}
  Let $\tree, \tree_1,\tree_2$ be three reduced trees. The structure
  coefficient $c_{\tree_1,\tree_2}^{\tree}$ is obtained by taking any
  $A$ such that $\tree=T_{\restriction A}$ and counting the number of
  $A_1\uplus A_2=A$ such that $\tree_1=T_{\restriction A_1}$ and
  $\tree_2=T_{\restriction A_2}$. This matches with the definition of the
  structure constants of the shuffle product on planar tree
  polynomials
  (see~\cite[Section~3]{Gerritzen.2005.PlanarShuffleProduct}).
\end{proof}

\bibliographystyle{alpha}
\bibliography{main1}

\end{document}